\newcolumntype{P}[1]{>{\centering\arraybackslash}p{#1}}
\theoremstyle{plain}
\newtheorem{theorem}{Theorem}
\newtheorem{corollary}[theorem]{Corollary}
\newtheorem{lemma}[theorem]{Lemma}
\newtheorem{proposition}[theorem]{Proposition}
\theoremstyle{definition}
\newtheorem{example}[theorem]{Example}
\theoremstyle{remark}
\newcommand{\N}{\mathbb{{N}}}
\newcommand{\Z}{\mathbb{Z}}
\newenvironment{rSection}[1]{ 
  \sectionskip
  \MakeUppercase{\bf #1} 
  \sectionlineskip
  \hrule 
  \begin{list}{}{ 
    \setlength{\leftmargin}{1.5em} 
  }
  \item[]
}{
  \end{list}
}
\def\sectionlineskip{\medskip} 
\def\sectionskip{\medskip} 
\titleformat{\chapter}{\centering\normalfont\large}{\thechapter}{1em}{} 
\titleformat{\section}{\centering\normalfont\large}{\thesection}{1em}{}
\titleformat{\subsection}{\centering\normalfont\large}{\thesubsection}{1em}{}
		\titlespacing*{\chapter}{0pt}{-0.25in}{20pt} 
\patchcmd{\chapter}{\if@openright\cleardoublepage\else\clearpage\fi}{}{}{}
\begin{document}

\setcounter{page}{1}
\thispagestyle{empty}

\begin{center}
{\Large THE GINI INDEX IN ALGEBRAIC COMBINATORICS AND REPRESENTATION THEORY}\\
\vspace{0.6\baselineskip}
by\\
Grant Kopitzke\\
\vspace{1in}
A Dissertation Submitted in\\
Partial Fulfillment of the\\
Requirements for the Degree of\\
\vspace{1in}
Doctor of Philosophy \\
in Mathematics\\ 
\vspace{1in}
at\\
The University of Wisconsin-Milwaukee\\
May 2021\\
\end{center}
		\newpage
		\clearpage

		
		\pagestyle{plain}
\pagenumbering{roman}
\setcounter{page}{2}

		\begin{center}
		\singlespacing
		{\large ABSTRACT} \\
		THE GINI INDEX IN ALGEBRAIC COMBINATORICS AND REPRESENTATION THEORY\\
		\doublespacing
		by\\
		
		Grant Kopitzke\\
		\singlespacing
		The University of Wisconsin-Milwaukee, 2021 \\
		
		Under the Supervision of Dr. Jeb Willenbring\\
		\doublespacing
		\end{center}
		\begin{flushleft}
		\doublespacing
The Gini index is a number that attempts to measure how equitably a resource is distributed throughout a population, and is commonly used in economics as a measurement of inequality of wealth or income. The Gini index is often defined as the area between the ``Lorenz curve'' of a distribution and the line of equality, normalized to be between zero and one. In this fashion, we will define a Gini index on the set of integer partitions and prove some combinatorial results related to it; culminating in the proof of an identity for the expected value of the Gini index. These results comprise the principle contributions of the author, some of which have been published in \cite{Kopitzke} .\\

We will then discuss symmetric polynomials, and show that the Gini index can be understood as the degrees of certain Kostka-foulkes polynomials. This identification yields a generalization whereby we may define a Gini index on the irreducible representations of a complex reflection group, or connected reductive linear algebraic group. \\
		\end{flushleft}
		\newpage
		\topskip0pt
        \vspace*{\fill}
		\begin{center}
		    \textit{For Amanda}
		\end{center}
        \vspace*{\fill}
        \vspace*{\fill}
		\newpage
				
		 \renewcommand*\contentsname{TABLE OF CONTENTS}

		    \tableofcontents
				\newpage
				
				    \renewcommand*\listfigurename{LIST OF FIGURES}
					
    \listoffigures

		\newpage
	
    \listoftables

\newpage
\doublespacing
		    \chapter*{ACKNOWLEDGMENTS}

\thispagestyle{plain}
\pagenumbering{roman}
\setcounter{page}{8}  
				\doublespacing
    There are many people without whom this work would not have been possible. Chiefly among them is my advisor, Dr. Jeb Willenbring. His advice, patience, insight, guidance, and encouragement have surpassed anything I could have expected. I am extremely honored and grateful to have worked with him. By the same token, I thank my dissertation committee members; Professors Allen Bell, Kevin McLeod, Boris Okun, and Yi Ming Zou for their instruction, readership and comments. Nearly all of the mathematics I know is due to the hard work of these professors, and I am forever indebted to them. 
    
    Outside of UW Milwaukee, but within the realm of mathematics, I first wish to thank Dr. Carrie Tirel of UW Oshkosh, Fox Cities; for giving me the opportunity to tutor in the math lab, for convincing me to go on to graduate school, and for sparking within me a love of teaching that has guided my career choices ever since. Also at UW Oshkosh, I would like to thank Dr. David Penniston, my undergraduate thesis advisor, who pushed me harder and further than I thought I could go at the time, and who taught me what it really means to do mathematical research.
    
    Lastly, I wish to thank my family. Even though they may not understand the contents of this work, their support proved invaluable throughout the process. First and foremost, I want to thank my wife Amanda Kopitzke. Throughout this process, the only person who has been more patient with me than my advisor, was Amanda. Her constant love, support, and reassurance have been truly overwhelming. Next my father and mother, Lynn and Barbara Kopitzke, I thank for their generosity, confidence, and support. I thank my brother, Dr. Ryan Ross for his advice and encouragement throughout the job hunting and interviewing process. Finally I thank my sisters, Grace and Amy Lucas, and my brother Tyler Ross. Research can be a lonely process, especially during a quarantine. Despite this, Grace, Amy, and Tyler were always a text away to keep me company. For that I am very grateful. 
    
    Many more people have touched my life along this journey --- far too many to thank individually by name. However, their presence, assistance, and guidance is acknowledged and greatly appreciated.

		\newpage

\clearpage

\pagenumbering{arabic}
	\doublespacing
\chapter{Introduction}
\label{Chapter: Introduction}
There are two primary goals to this work. The first, addressed in Chapter 3, is to investigate the combinatorial properties of the discrete Gini index, $g$, defined on the set, $P_n$, of partitions of a positive integer $n$. We prove a convenient identity relating the Gini index, $g$, to the second elementary symmetric polynomial $e_2$. This identity provides us with a generating function for the Gini index
\[\prod_{n=1}^{\infty}\frac{1}{1-q^{\binom{n+1}{2}}x^n}-1=  \sum_{n=1}^{\infty}\sum_{\lambda\vdash n}q^{\left(\binom{n+1}{2}-g(\lambda) \right)}x^n.\]

From here, we analyze two different properties of the Gini index: its dominance properties (known as ``Schur convexity''), and its expected value on the set $P_n$. We show that the generating function for the Gini index provides us with easily computed lower bounds on the length of the maximum antichain in the dominance lattice - which touches on a longstanding open problem in the theory of integer partitions. Finally, in the end of Chapter 3 we prove an identity by which one can easily calculate the expected value of the Gini index on the set $P_n$. 

The second goal of this work is to frame the discrete Gini index, $g$, within the structure of representation theory. This function occurs naturally as the degrees of certain Kostka-Foulkes polynomials $K_{\lambda, \mu}$, which we discuss in Chapter 4. The first connections to representation theory are seen in Chapter 5, where the Gini index appears as the degrees of the graded multiplicities of irreducible representations of the symmetric group, $S_n$, inside the coinvariant ring of $S_n$. We then extend the notion of the Gini index to one defined on the irreducible representations of a complex reflection group. 

The discrete Gini index $g$ has a natural extension, $g_{nk,n}$, discussed in Section 3.6, which is directly related to the representation theory of the general linear group $GL_n(\mathbb{C})$. In Chapter 6 we show that this ``extended'' Gini index occurs as the degree of the graded multiplicity of an irreducible rational representation of $GL_n(\mathbb{C})$ inside the harmonic polynomials of $GL_n(\mathbb{C})$. As was done in Chapter 5, we extend this notion to define a Gini index on the irreducible representations of a connected reductive linear algebraic group over $\mathbb{C}$.

\newpage
\chapter{Preliminaries}
\label{Chapter: Preliminaries}
Unless otherwise stated, throughout this dissertation the ground field will always be the complex numbers, and vector spaces will always be assumed to be finite dimensional over $\mathbb{C}$. Furthermore, representations are always assumed to be linear, and finite dimensional. If $\rho:G\longrightarrow GL(V)$ is a representation of a group $G$, we will usually suppress either the map $\rho$ or the vector space $V$.

\section{Integer Partitions}
A \textit{partition}, $\lambda$, of a positive integer $n$ (sometimes written as ${\lambda \vdash n}$) is a sequence ${(\lambda_1 , \lambda_2 ,\ldots, \lambda_{\ell})}$ of ${\ell\leq n}$ decreasing non-negative integers such that ${\sum_{i=1}^{\ell}{\lambda_{i}}=n}$. The ${\lambda_i}$ (${1\leq i \leq \ell}$) are called the ``parts'' of $\lambda$. To avoid repeating parts, it is sometimes useful to write a partition as $(\lambda_1^{a_1},\lambda_2^{a_2},\ldots,\lambda_{\ell}^{a_{\ell}})$ to represent $\lambda_i$ repeating $a_i$ times. In this case we have that $\sum_{i=1}^{\ell}{{a_i}\lambda_i}=n$, and $\lambda_i\neq \lambda_j$ for all $i \neq j$. This notation will be used in the proof of Proposition \ref{prop3}. In order to make the length of $\lambda$ (the number of parts) equal to $n$, one can ``pad out'' the partition by adding ${n-\ell}$ zeros to the end. 
\begin{example}
The partition ${(4,3,1,1)}$ of 9 is equivalent to ${(4,3,1,1,0,0,0,0,0).}$ This identification will be used when defining the Lorenz curve of a partition.
\end{example}
A \textit{Young diagram} is a finite collection of boxes arranged in left-justified rows, with a weakly decreasing number of boxes in each row (see \cite{Fulton}). Integer partitions are in one to one correspondence with Young diagrams in the following way: if ${\lambda = (\lambda_1 , \lambda_2 ,\ldots, \lambda_{\ell})}$ is a partition of $n$, then the Young diagram of shape ${\lambda}$ has ${\lambda_1}$ boxes in its first row, ${\lambda_2}$ boxes in its second row, etc. 
\begin{example}
If ${\lambda = (4,3,1,1)}$, then the Young diagram of shape $\lambda$ is
\[\yng(4,3,1,1)\,.\]
\end{example}
The \textit{conjugate partition} $\widetilde{\lambda}$ of $\lambda$ is the partition of $n$ obtained by reflecting the Young diagram of $\lambda$ across its main diagonal. As in the previous example, if ${\lambda = (4,3,1,1)}$, then the Young Diagram of $\widetilde{\lambda}$ is 
\[\yng(4,2,2,1)\,,\]
hence ${\widetilde{\lambda}=(4,2,2,1)}$. Conjugation is clearly a bijection on the partitions of $n$.

The \textit{dominance order} is a partial order on the set of partitions of $n$.  If $\lambda=(\lambda_1,\lambda_2,\ldots,\lambda_n)$ and $\mu=(\mu_1,\mu_2,\ldots,\mu_n)$ are partitions of $n$, then $\mu \preceq \lambda$ if 
\[\sum_{i=1}^{k}{\mu_i} \leq \sum_{i=1}^{k}{\lambda_i} \]
for all $k\geq 1$. It is well known that conjugation of partitions is an antiautomorphism on the dominance lattice of partitions of $n$ (see \cite{Brylawski}). In other words, if $\mu \preceq \lambda$, then $\widetilde{\lambda}\preceq\widetilde{\mu}$. We will write $\mu \prec \lambda$ if $\mu \preceq \lambda$ and $\mu \neq \lambda$, and will let $P_n$ denote the partially ordered set of partitions of $n$ with respect to dominance.

Let $\lambda$ be a partition of $n$. A \emph{tableau of shape} $\lambda$ is a filling of the Young diagram of $\lambda$ with numbers from $[n]=\{1,\ldots,n\}$. A \emph{semistandard tableau} or \emph{column strict tableau} is a filling by positive integers in $[n]=\{1,2,\ldots,n\}$ that is
\begin{enumerate}
    \item weakly increasing across each row, and
    \item strictly increasing down each column.
\end{enumerate}
For brevity, if $\lambda$ is a partition of $n$, then we will often use $\lambda$ to refer to the partition or the Young diagram of shape $\lambda$ interchangeably.
A \emph{standard tableau} of shape $\lambda$ is a semistandard tableau in which each number in $[n]$ occurs exactly once.
\begin{example}
If $\lambda=(3,2,1,1)\vdash 7$, then
\[\young(122,23,3,7)\]
is a semistandard tableau, whereas
\[\young(125,34,6,7)\]
is a standard tableau.
\end{example}

\section{Irreducible representations of the symmetric group}
If $G$ is any finite group, then a finite dimensional \emph{representation} of $G$ over $\mathbb{C}$ is a group homomorphism $G\longrightarrow GL(V)$, where $V$ is a finite dimensional complex vector space, and $GL(V)$ is the group of invertible linear transformations on $V$. The contents of chapter 5 deal with representations of finite complex reflection groups. The canonical example of such a group is the symmetric group, $S_n$, of permutations of $[n]=\{1,\ldots,n\}$. In this section we will cover the classification of irreducible representations of $S_n$ over $\mathbb{C}$.

The number of irreducible representations of $S_n$, like any finite group, is the number of conjugacy classes. As the conjugacy classes of $S_n$ are indexed by cycle types, their number is $P(n)$; the number of partitions of $n$. There is, in fact, a natural indexing of the irreducible representations of $S_n$ by these partitions via a construction called the \emph{Specht module} of a partition. Our construction of the Specht modules follows that in \cite{Fulton}.

Let $\lambda$ be a partition of $n$. The symmetric group, $S_n$, acts on the set of all numberings of the Young diagram of $\lambda$ with numbers from $[n]$ --- each of which occur exactly once. If $T$ is such a numbering of $\lambda$, and $\sigma\in S_n$, then the action of $\sigma$ on $T$, $\sigma\cdot T$, yields the numbering of $\lambda$ which has the number $\sigma(i)$ in the same box in which $i$ occurs in $T$. 
\begin{example}
Let $\sigma=(12345)\in S_5$, and let \[ T=\young(123,45).  \]
Then \[ \sigma\cdot T=\young(512,34). \]
\end{example}
For a numbering $T$ of $\lambda$, the \emph{row group}, $R(T)$, and \emph{column group}, $C(T)$, of $T$ are the subgroups of $S_n$ defined by
\begin{align*}
    R(T)&=\{\sigma\in S_n:\sigma\text{ permutes the entries of each row of $T$ among themselves}\},\text{ and}\\
    C(T)&=\{\sigma\in S_n:\sigma\text{ permutes the entries of each column of $T$ among themselves}\}.
\end{align*}
\begin{example}
When $S_5$ acts on the numbering
\[ T=\young(123,45) \]
of $\lambda=(3,2)$, the row and column groups of $T$ are
\begin{align*}
    R(T)&=\{\sigma\tau:\sigma\in S_3\text{ and }\tau\in\{1,(45)\}  \},\text{ and} \\
    C(T)&=\{1,(14),(25),(14)(25)  \}.
\end{align*}
\end{example}
A \emph{tabloid} is an equivalence class of numberings of a Young diagram $\lambda\vdash n$ with distinct numbers from $[n]$. Under this relation, two numberings of $\lambda$ are considered to be equivalent if their row groups are the same. We denote by $\{T\}$ a tabloid containing the numbering $T$. The symmetric group acts on the set of tabloids by
\[ \sigma\cdot\{T\}=\{\sigma\cdot T\} .\]

Let $\mathbb{C}[S_n]$ denote the group ring of $S_n$, which consists of all complex linear combinations of permutations of $[n]$, where multiplication is determined by composition in $S_n$. A representation of $S_n$ is the same as a left $\mathbb{C}[S_n]$-module. Given a numbering $T$ of the Young diagram of shape $\lambda$, we define the \emph{Young symmetrizers} of $T$ as the elements
\begin{align*}
    a_T&=\sum_{p\in R(T)}p,\\
    b_T&=\sum_{q\in C(T)}\text{sgn}(q)q,\text{ and }\\
    c_T&=b_T\cdot a_T.
\end{align*}
\begin{example}
The Young symmetrizers $a_T$ and $b_T$ of the filling
\[T=\young(123,45)  \]
are
\begin{align*}
    a_T&=\left[1+(12)+(13)+(23)+(123)+(132)  \right]\left[1+(45) \right] ,\text{ and}\\
    b_T&= 1-(14)-(25)+(14)(25).
\end{align*}
\end{example}
Define $M^{\lambda}$ to be the complex vector space with basis the tabloids $\{T\}$ of shape $\lambda$. Since $S_n$ acts on the set of tabloids of shape $\lambda$, it acts on $M^{\lambda}$ - which makes $M^{\lambda}$ a left $\mathbb{C}[S_n]$-module. For each numbering $T$ of $\lambda$, there is a special element of $M^{\lambda}$ defined by the formula
\[ v_{T}=b_T\cdot\{T\}. \]
We then define the \emph{Specht module} $S^{\lambda}$ to be the subspace of $M^{\lambda}$ spanned by the elements $v_{T}$ as $T$ varies over all numberings of $\lambda$. The Specht module $S^{\lambda}$ is then a $\mathbb{C}[S_n]$-submodule of $M^{\lambda}$. Putting this all together, we obtain the following theorem.

\begin{theorem}{Classification of Irreducible Representations of $S_n$ (\cite{Fulton} ) }\\
For each partition $\lambda$ of $n$, $S^{\lambda}$ is an irreducible representation of $S_n$. Every irreducible representation of $S_n$ is isomorphic to exactly one $S^{\lambda}$. 
\end{theorem}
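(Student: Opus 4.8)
The plan is to prove the two assertions in turn: that each Specht module $S^{\lambda}$ is irreducible, and that $\lambda\mapsto S^{\lambda}$ is a bijection from the partitions of $n$ onto the isomorphism classes of irreducible representations. Both halves are powered by the Young symmetrizer $c_T=b_Ta_T$ together with a single combinatorial lemma relating the column group $C(T)$ of a $\lambda$-numbering $T$ to the row structure of a $\mu$-numbering $S$.

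First I would isolate that combinatorial lemma, which I expect to be the main obstacle. Call it the \emph{intersection lemma}: if $T$ is a numbering of $\lambda$ and $S$ a numbering of $\mu$ such that no two integers lying in a common row of $S$ ever lie in a common column of $T$, then $\mu\preceq\lambda$. Its utility is immediate: if some pair $i,j$ shares a row of $S$ and a column of $T$, then $(ij)\in C(T)$ fixes the tabloid $\{S\}$, so the terms of $b_T\{S\}=\sum_{q\in C(T)}\operatorname{sgn}(q)\{qS\}$ cancel in pairs and $b_T\{S\}=0$; hence $b_T\{S\}\neq 0$ forces the hypothesis of the lemma and therefore $\mu\preceq\lambda$. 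Proving the lemma itself is a greedy exchange argument: one uses the ``distinct columns'' hypothesis to move, via elements of $C(T)$, the entries occupying the first $k$ rows of $S$ up into the first $k$ rows of $T$, which yields $\mu_1+\cdots+\mu_k\leq\lambda_1+\cdots+\lambda_k$ for every $k$. Carrying out this rearrangement so that the entries genuinely fit, column by column, is the delicate bookkeeping step.

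Granting the lemma, irreducibility follows by the idempotent route. I would first verify $c_T\neq 0$ by noting that the identity permutation occurs in $b_Ta_T$ with coefficient $1$, since $R(T)\cap C(T)=\{1\}$. Next, the $\lambda=\mu$, $S=T$ case of the intersection lemma shows that any $y\in\mathbb{C}[S_n]$ satisfying $p\,y\,q=\operatorname{sgn}(q)\,y$ for all $p\in R(T)$, $q\in C(T)$ is a scalar multiple of $c_T$; applied to $y=c_T x c_T$ this gives $c_T\,\mathbb{C}[S_n]\,c_T=\mathbb{C}\,c_T$, and the case $x=1$ yields
\[ c_T^2=n_\lambda\,c_T,\qquad n_\lambda=\frac{n!}{\dim S^{\lambda}}\neq 0. \]
A Schur-type argument now shows the left ideal $\mathbb{C}[S_n]\,c_T$ is irreducible: for any nonzero submodule $U$ one has $c_TU\subseteq\mathbb{C}\,c_T$, and $c_TU\neq 0$ forces $c_T\in U$, hence $U=\mathbb{C}[S_n]\,c_T$. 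Finally I would identify this ideal with $S^{\lambda}$ through the $\mathbb{C}[S_n]$-equivariant surjection $\mathbb{C}[S_n]\to M^{\lambda}$, $\sigma\mapsto\sigma\{T\}$, under which $c_T$ maps to a nonzero multiple of $v_{T}=b_T\{T\}$ and $\mathbb{C}[S_n]\,c_T$ maps onto $\mathbb{C}[S_n]\,v_{T}=S^{\lambda}$; since the source is irreducible, $S^{\lambda}$ is irreducible.

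For distinctness and completeness I would use the intersection lemma a second time, in the form $\operatorname{Hom}_{S_n}(S^{\lambda},M^{\mu})=0$ whenever $\lambda$ does not dominate $\mu$ (that is, $\mu\not\preceq\lambda$): extending a nonzero map to $M^{\lambda}\to M^{\mu}$ and evaluating $b_T$ on its output expresses everything through terms $b_T\{S\}$, which vanish unless $\mu\preceq\lambda$. Since $S^{\mu}\subseteq M^{\mu}$, an isomorphism $S^{\lambda}\cong S^{\mu}$ gives $\mu\preceq\lambda$, and by symmetry $\lambda\preceq\mu$, so $\lambda=\mu$; thus the modules $S^{\lambda}$ are pairwise non-isomorphic. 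They are $P(n)$ in number, which equals the number of conjugacy classes of $S_n$ and hence the number of irreducible representations of $S_n$; therefore every irreducible representation is isomorphic to exactly one $S^{\lambda}$, completing the proof.
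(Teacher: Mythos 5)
The paper does not actually prove this statement: it is quoted as background from \cite{Fulton}, with the Specht-module construction given but the classification itself cited without proof. So there is no in-paper argument to compare against; your proposal supplies the standard Young-symmetrizer proof (essentially the route of Fulton and of Fulton--Harris), and its architecture is sound: the row-of-$S$/column-of-$T$ intersection lemma, the vanishing $b_T\{S\}=0$ when the hypothesis fails, the consequence $c_T\,\mathbb{C}[S_n]\,c_T=\mathbb{C}c_T$ with $c_T^2=n_\lambda c_T$ and $n_\lambda\neq 0$ via the trace of right multiplication, the transport of irreducibility along $\sigma\mapsto\sigma\{T\}$, the dominance argument for pairwise non-isomorphism, and the count against conjugacy classes. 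Two details deserve care when you write it out. First, with the paper's convention $c_T=b_T\cdot a_T$ one has $q\,c_T\,p=\operatorname{sgn}(q)\,c_T$ for $q\in C(T)$, $p\in R(T)$, so the uniqueness lemma should read ``$q\,y\,p=\operatorname{sgn}(q)\,y$ for all $q\in C(T)$, $p\in R(T)$ implies $y\in\mathbb{C}c_T$''; your condition $p\,y\,q=\operatorname{sgn}(q)\,y$ has the factors on the wrong sides and is the version adapted to $a_Tb_T$. Second, in the Schur-type step you assert $c_TU\neq 0$ for a nonzero submodule $U\subseteq\mathbb{C}[S_n]c_T$ without justification; this needs Maschke's theorem (if $c_TU=0$ then $U^2\subseteq\mathbb{C}[S_n]c_TU=0$, while a nonzero left ideal of a semisimple algebra is generated by an idempotent and so cannot square to zero). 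Neither point damages the overall plan.
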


\begin{example}
\begin{enumerate}
    \item For any $n\in\mathbb{N}$, the symmetric group $S_n$ has a one-dimensional representation called the \emph{trivial representation}, defined by
\[ \sigma\cdot x=x, \]
for all $\sigma\in S_n$ and $x\in\mathbb{C}$. If $\lambda=(n)$, then $S^{(n)}$ is the trivial representation of $S_n$.

\item If $n\geq 2$, then $S_n$ has a one-dimensional representation called the \emph{sign representation}, defined by
\[\sigma\cdot x=\text{sgn}(\sigma)x,  \]
for all $\sigma\in S_n$ and $x\in\mathbb{C}$. Here, the \emph{sign} of $\sigma$ is defined to be $1$ if $\sigma$ is a product of an even number of transpositions, and is $-1$ otherwise. If $\lambda=(1^n)$, then $S^{(1^n)}$ is the alternating representation of $S_n$. 

\item if $n> 2$, then $S_n$ has a $(n-1)$-dimensional irreducible representation called the \emph{standard representation}. Choose a basis $\{e_1,\ldots,e_n\}$ of $\mathbb{C}^n$, and define an action of $S_n$ on $\mathbb{C}^n$ by
\[\sigma\cdot(a_1e_1+\cdots+a_ne_n)=a_1e_{\sigma(1)}+\cdots+a_ne_{\sigma(n)},  \]
where $\sigma\in S_n$ and $\alpha_1,\ldots,\alpha_n\in\mathbb{C}$. This action defines an $n$-dimensional reducible representation of $S_n$ called the \emph{permutation representation}. It is reducible because it has a 1-dimensional subspace spanned by
\[ e_1+\cdots+e_n. \]
The orthogonal compliment of this 1-dimensional space is the $(n-1)$-dimensional irreducible subspace of $\mathbb{C}^n$ spanned by the vectors $(v_1,\ldots,v_n)\in\mathbb{C}^n$ such that $v_1+\cdots+v_n=0$. This is called the \emph{standard representation} of $S_n$. If $\lambda=(n-1,1)$, then $S^{(n-1,1)}$ is the standard representation of $S_n$.
\item Let $V=S^{(n-1,1)}$ be the standard representation of $S_n$. If $n>3$ and $0\leq k\leq n-1$ then $S_n$ has a $\binom{n-1}{k}$-dimensional irreducible representation, the \emph{$k^{th}$ exterior power} of the standard representation, denoted by $\Lambda^k V$. If $\lambda=(n-k,1^k)$, then $S^{(n-k,1^k)}$ is isomorphic to $\Lambda^k V$.

\end{enumerate}
\end{example}

\section{Linear algebraic groups}
\label{Linear Algebraic Groups}
The contents of chapter 6 deal with representations of certain linear algebraic groups. Before discussing the main topics of this dissertation, we will specify the exact objects and morphisms in the category to be considered in that chapter. For precision, the groups of chapter 6 are always \emph{linear algebraic groups} over the complex numbers. Such groups are defined as affine varieties with a compatible group structure, and are always isomorphic to a Zariski-closed subgroup of the complex general linear group, $GL_n(\mathbb{C})$, of $n\times n$ invertible complex matrices (for some $n\in\mathbb{N})$. If $G\subseteq GL_n(\mathbb{C})$ is a algebraic group, then a \emph{rational representation} of $G$ is a group homomorphism, $G\longrightarrow GL_m(\mathbb{C})$ for some $m\in\mathbb{N}$, which is a regular morphism in the category of affine varieties. 

To each linear algebraic group $G$ there is an associated Lie algebra $\mathfrak{g}$, which is defined as the derivations (infinitesimal transformations) of the regular functions $\mathcal{O}[G]$ that commute with left translations (see \cite{SRI}). By $\mathcal{O}[G]$ we mean the algebra of regular (rational) functions on $G$, which are defined as the restriction to $G$ of the regular functions
\[ \mathcal{O}[GL_n(\mathbb{C})]=\mathbb{C}[x_{11},x_{12},\ldots,x_{nn},\det(x)^{-1}] \]
on $GL_n(\mathbb{C})$, where $x=[x_{ij}]\in GL_n(\mathbb{C})$. The Lie algebra of a linear algebraic group has a natural embedding into the $n\times n$ complex matrices $M_n(\mathbb{C})$, and is equipped with a Lie bracket 
\[[\,\bullet\,,\bullet\,]:\mathfrak{g}\times\mathfrak{g}\longrightarrow\mathfrak{g},  \]
which can be defined in terms of matrices as
\[ [X,Y]=XY-YX, \]
for $X,Y\in \mathfrak{g}$. Henceforth we will always view a Lie algebra as being comprised of matrices. Any element $X\in\mathfrak{g}$ yields a linear transformation
\[\text{ad}_X(Y)=[X,Y],  \]
which is called the \emph{adjoint representation} of $\mathfrak{g}$. The representation theories of $G$ and its Lie algebra $\mathfrak{g}$ are closely connected. In fact, a representation of $G$ is irreducible if and only if its differential is an irreducible representation of the Lie algebra $\mathfrak{g}$ (c.f. \cite{SRI} Theorem 2.2.7). Hence $\mathfrak{g}$ plays a significant role in the classification of the irreducible representations of $G$.

We will further restrict the linear algebraic groups under consideration to those that are connected and reductive. A linear algebraic group $G$ is \emph{connected} if $\mathcal{O}[G]$ has no zero divisors. $G$ is \emph{reductive} if every rational representation of $G$ is completely reducible (hence the name ``reductive''). In other words, every rational representation of a reductive group can be written as a sum of irreducible representations. The complete reducibility of rational representations makes the study of reductive groups and the classification of their irreducible representations particularly nice.

A toral subgroup $T\subseteq G$ is a subgroup that is isomorphic to $(\mathbb{C}^{\times})^m$, for some non-negative integer $m$. A torus is \emph{maximal} if it is not properly contained within any other torus. If the group $G$ is connected and reductive, then all maximal tori are conjugate and have the same dimension (known as the \emph{rank} of $G$). Furthermore, every element of $G$ lies in a maximal torus. In this case, the lie algebra ,$\mathfrak{h}$, of a maximual torus, $T$, is called a \emph{Cartan} subalgebra of $\mathfrak{g}$. Cartan subalgebras are maximal abelian subalgebras of $\mathfrak{g}$ in which every element is semisimple. Just as with maximal tori, all Cartan subalgebras are conjugate in $\mathfrak{g}$.

\section{The Theorem of the Highest Weight}
\label{The Theorem of the Highest Weight}

Let $G$ be a connected reductive linear algebraic group of rank $n$, and fix a choice $T\cong (\mathbb{C}^{\times})^n$ of maximal torus in $G$. Let $\mathfrak{g}$ and $\mathfrak{h}$ be the Lie algebras of $G$ and $T$, respectively. Let $\mathfrak{h}^*$ denote the algebraic dual space of $\mathfrak{h}$; that is, the set of all linear functionals $\mathfrak{h}\longrightarrow\mathbb{C}$.

Since $T$ is abelian, its irreducible representations are all 1-dimensional. Denote by $\mathscr{X}(T)$ the group (under tensor products of representations) of all irreducible representations of $T$. The group $\mathscr{X}(T)$ is isomorphic to $\mathbb{Z}^n$. The \emph{weight lattice} of $G$ is the set 
\[ P(G)=\{d\theta:\theta\in\mathscr{X}(T)\}\subseteq\mathfrak{h}^* \]
of differentials of irreducible representations of $T$ (i.e., the irreducible representations of $\mathfrak{h}$ corresponding to those of $T$). The elements of $P(G)$ are called \emph{weights} of $G$. If $\alpha\in P(G)$ is a weight of $G$, we define
\[ \mathfrak{g}_{\alpha}=\{X\in\mathfrak{g}:[A,X]=\alpha(A)X\text{ for all }A\in\mathfrak{h}\}. \]
If $\alpha=0$, then $\mathfrak{g}_0=\mathfrak{h}$. If $\alpha$ is nonzero and $\mathfrak{g}_{\alpha}$ is nonzero, then we call $\alpha$ a \emph{root} of $T$ on $\mathfrak{g}$, and we call $\mathfrak{g}_{\alpha}$ a root space. We call the set $\Phi\subseteq\mathfrak{h}^*$ of roots the \emph{root system} of $\mathfrak{g}$, with respect to our choice $T$ of maximal torus. The root system $\Phi$ spans $\mathfrak{h}^*$. 

A subset 
\[ \Delta=\{\alpha_1,\ldots,\alpha_n\}\subseteq \Phi \]
is a set of \emph{simple roots} of $\mathfrak{g}$ if every $\gamma\in\Phi$ can be written uniquely as
\[ \gamma=n_1\alpha_1+\cdots+\cdots+n_k\alpha_k, \]
with $n_1,\ldots,n_k$ integers all of the same sign. The simple roots $\Delta$ form a basis for $\mathfrak{h}^*$, and partition the root system $\Phi$ into two disjoint subsets
\[\Phi=\Phi^+\cup\Phi^-,\]
where $\Phi^+$ consists of all roots for which the coefficients $n_1,\ldots,n_k$ are non-negative. We call $\gamma\in\Phi^+$ a \emph{positive root} relative to $\Delta$.

For a root $\alpha\in\Phi$, we call an element $h_{\alpha}\in[\mathfrak{g}_{\alpha},\mathfrak{g}_{\alpha}]$ such that $\alpha(h_{\alpha})=2$ a \emph{coroot} of $\alpha$. The \emph{weight lattice} of $\mathfrak{g}$ is defined as
\[ P(\mathfrak{g})=\{\lambda\in\mathfrak{h}^*:\lambda(h_{\alpha})\in\mathbb{Z}\text{ for all }\alpha\in\Phi\}, \]
and the \emph{dominant integral weights} of $\mathfrak{g}$ (relative to the choice of simple roots $\Delta$) are defined as,
\[P_{++}(\mathfrak{g})=\{\lambda\in\mathfrak{h}^*:\lambda(h_{\alpha})\in\mathbb{Z}_{\geq 0}\text{ for all }\alpha\in\Phi \}.  \]
Finally, the \emph{dominant weights} of $G$ are defined to be the weights of $G$ that are also dominant integral weights of the Lie algebra $\mathfrak{g}$: 
\[P_{++}(G)=P(G)\cap P_{++}(\mathfrak{g}).  \]

There is a partial order defined on the set of weights of $\mathfrak{g}$ (and thus on the dominant weights of $G$). Let $\lambda,\mu\in \mathfrak{h}^*$ be two dominant weights of $\mathfrak{g}$. Since $\Phi^+$ spans $\mathfrak{h^*}$, $\lambda$ and $\mu$ can be written as linear combinations of the positive roots. If $\lambda-\mu$ can be written as a linear combination of the positive roots with non-negative real coefficients, then we say that $\lambda$ is \emph{higher} than $\mu$. This partial order then restricts to the dominant weights of $G$. Let $V$ be a (finite dimensional) irreducible representation of $G$. A weight $\lambda$ of $V$ is called the \emph{highest weight} of $V$ if $\lambda$ is higher than every other weight $\mu$ of $V$.

The irreducible representations of $G$ are classified by a result known as the ``theorem of the highest weight''.

\begin{theorem}{Theorem of the Highest Weight}
If $G$ is a connected reductive linear algebraic group and $T$ is a maximal torus in $G$, the following results hold
\begin{enumerate}
    \item Every irreducible representation of $G$ has a highest weight.
    \item Two irreducible representations of $G$ with the same highest weight are isomorphic.
    \item The highest weight of each irreducible representation of $G$ is dominant.
    \item If $\lambda$ is a dominant weight of $G$, there exists an irreducible representation of $G$ with highest weight $\lambda$.
\end{enumerate}
\end{theorem}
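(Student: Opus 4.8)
The plan is to reduce the entire statement to the root space decomposition $\g = \mathfrak{h}\oplus\bigoplus_{\alpha\in\Phi}\g_\alpha$ together with the representation theory of the $\mathfrak{sl}_2$-triples $\{e_\alpha,h_\alpha,f_\alpha\}$ attached to each root, and to handle the four parts in the order (1), (3), (2), (4). Throughout I would use that any finite-dimensional representation $V$ of $G$ restricts to a representation of $\g$ on which $\mathfrak{h}$ acts semisimply, so that $V=\bigoplus_\mu V_\mu$ decomposes into finitely many weight spaces, and that $\g_\alpha\cdot V_\mu\subseteq V_{\mu+\alpha}$. For part (1), since the set of weights of $V$ is finite I would choose a weight $\lambda$ that is maximal for the ``higher than'' order; any $v\in V_\lambda$ is then annihilated by every $e_\alpha$ with $\alpha\in\Phi^+$, since $e_\alpha v\in V_{\lambda+\alpha}$ and $\lambda+\alpha$ would be strictly higher than $\lambda$. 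By the Poincar\'e--Birkhoff--Witt theorem $U(\g)v=U(\n^-)v$, where $\n^-=\bigoplus_{\alpha\in\Phi^+}\g_{-\alpha}$, and by irreducibility this equals $V$. Every weight of $U(\n^-)v$ has the form $\lambda-\sum n_i\alpha_i$ with $n_i\ge 0$, so $\lambda$ is higher than every weight of $V$ and $\dim V_\lambda=1$; this proves existence and the uniqueness of both the highest weight and the highest weight line.

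For part (3) I would fix a simple root $\alpha_i$ and restrict $V$ to the triple $\{e_i,h_i,f_i\}$. The highest weight vector $v_\lambda$ satisfies $e_i v_\lambda=0$, hence generates a finite-dimensional highest weight module for this copy of $\mathfrak{sl}_2$, and the classification of such modules forces its $h_i$-eigenvalue $\lambda(h_{\alpha_i})$ to be a non-negative integer. Since this holds for every simple root, and every positive coroot is a non-negative integer combination of the simple coroots, we get $\lambda(h_\alpha)\in\Z_{\ge 0}$ for all $\alpha\in\Phi$, i.e. $\lambda\in P_{++}(\g)$; membership $\lambda\in P(G)$ is automatic as $\lambda$ is the differential of a character of $T$, so $\lambda\in P_{++}(G)$. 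For part (2), given irreducibles $V,V'$ with a common highest weight $\lambda$ and highest weight vectors $v,v'$, I would form the cyclic $\g$-submodule $W\subseteq V\oplus V'$ generated by $(v,v')$. The two projections $W\to V$ and $W\to V'$ are surjective $\g$-module maps whose kernels are proper (a nonzero kernel would furnish a second vector of weight $\lambda$ in $W$, contradicting $\dim W_\lambda=1$), hence zero; thus each projection is an isomorphism and $V\cong V'$.

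For part (4) lies the real work. Given a dominant weight $\lambda$, I would build the Verma module $M(\lambda)=U(\g)\otimes_{U(\borel)}\mathbb{C}_\lambda$, pass to its unique irreducible quotient $L(\lambda)$, and then show $L(\lambda)$ is finite-dimensional. Finite-dimensionality is the crux: using the $\mathfrak{sl}_2$-theory along each simple direction one shows the set of weights of $L(\lambda)$ is stable under the Weyl group and therefore, lying in the convex hull of a single Weyl orbit, is finite. The final and most delicate step is to integrate the finite-dimensional $\g$-representation $L(\lambda)$ to a \emph{rational} representation of $G$: here one uses that $G$ is connected (so a representation is determined by its differential) and that $\lambda\in P(G)$ genuinely lifts to a character of $T$, so the constructed $\g$-action exponentiates to an algebraic action of $G$. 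I expect this passage from the Lie algebra to the algebraic group --- rather than the finite-dimensionality estimate --- to be the main obstacle, and I would lean on the structure theory in \cite{SRI}, or equivalently on the Borel--Weil realization of $L(\lambda)$ as global sections of a line bundle on $G/B$, which produces the group representation directly.
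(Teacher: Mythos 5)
The paper does not actually prove this statement: the Theorem of the Highest Weight appears in the preliminaries as quoted background, with the reader referred to \cite{SRI} for details, so there is no in-paper argument to compare yours against. Judged on its own terms, your proposal is the standard proof and is essentially sound. Parts (1)--(3) are complete modulo routine facts: the maximal-weight-plus-PBW argument correctly yields existence, uniqueness, and $\dim V_\lambda=1$; the $\mathfrak{sl}_2$-triple argument gives $\lambda(h_{\alpha_i})\in\Z_{\geq 0}$ on simple coroots, and your reduction to positive coroots via non-negative combinations is valid; and in part (2) your claim that a nonzero kernel of $W\to V$ forces a second weight-$\lambda$ vector is correct, though it deserves one more sentence --- the kernel is $W\cap(0\oplus V')$, which by irreducibility of $V'$ is either $0$ or all of $0\oplus V'$, and only the latter case produces the extra vector $(0,v')$ contradicting $\dim W_\lambda=1$.

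Part (4) is where your proposal stops being self-contained, as you yourself flag. The construction $M(\lambda)\twoheadrightarrow L(\lambda)$ and the strategy of proving finite-dimensionality via Weyl-group stability of the weight set are correct in outline, but both that finiteness argument and the integration of the finite-dimensional $\g$-module to a \emph{rational} representation of the algebraic group $G$ are substantial theorems that you defer to \cite{SRI} or to Borel--Weil rather than prove. Since the paper itself treats the entire theorem as a citation, this is a reasonable level of detail, but be aware that your write-up of (4) is a proof sketch with the two hardest steps outsourced, not a proof. One small additional caveat for the reductive (non-semisimple) case, e.g.\ $G=GL_n(\mathbb{C})$: the center of $\g$ acts on $L(\lambda)$ by the restriction of $\lambda$, and the requirement that this central action exponentiate to $T$ is exactly the condition $\lambda\in P(G)$ that you invoke, so your appeal to connectedness plus integrality of $\lambda$ is the right mechanism even though you do not carry it out.
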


\section{The General Linear Group}
Fix a positive integer $n$. The general linear group $GL_n(\mathbb{C})$ of complex $n\times n$ invertible matrices is a connected reductive linear algebraic group. As a affine variety, it is defined as the set of points $(x,y)\in \mathbb{C}^{n^2}\times \mathbb{C}$ satisfying the polynomial equation
\[ y\det(x)=1. \]
The canonical choice of maximal torus in $GL_n(\mathbb{C})$ is the subgroup $T$ of diagonal matrices. The Lie algebra of $GL_n(\mathbb{C})$ is the set
$\mathfrak{gl}_n(\mathbb{C})=M_n(\mathbb{C})$
of all $n\times n$ complex matrices, and the Lie algebra of $T$ is the subalgebra
\[ \mathfrak{h}=\{X\in\mathfrak{gl}_n(\mathbb{C}):X\text{ is diagonal}\}.\]
Define $\varepsilon_i\in\mathfrak{h}^*$ such that \[ \varepsilon_i(A)=a_i, \]
for any $A=\text{diag}[a_1,\ldots,a_n]\in\mathfrak{h}$. The root system of $T$ is the set
\[ \Phi=\{\varepsilon_i-\varepsilon_j:1\leq i\neq j\leq n\} .\]
The standard choice for the set of simple roots is
\[\Delta=\{\varepsilon_i-\varepsilon_{i+1}:1\leq i <n\}, \]
and the corresponding set of positive roots is
\[\Phi^+=\{\varepsilon_i-\varepsilon_j:1\leq i<j\leq n  \}  .\]
The weight lattice $P(GL_n(\mathbb{C}))$ can then be written in terms of these weights as
\[P(GL_n(\mathbb{C}))=\bigoplus_{k=1}^n\mathbb{Z}\varepsilon_k.  \]
The set of dominant integral weights of $\mathfrak{g}$ is given by
\[ P_{++}(\mathfrak{g})=\{k_1\varepsilon_1+\cdots+k_n\varepsilon_n:k_1\geq k_2\geq\cdots\geq k_n\text{ and }k_i-k_{i+1}\in\mathbb{Z} \}. \]
Hence the dominant weights of $GL_n(\mathbb{C})$ are
\begin{align*}
    P_{++}(GL_n(\mathbb{C}))&=P(GL_n(\mathbb{C}))\cap P_{++}(\mathfrak{g})\\
    &=\{k_1\varepsilon_1+\cdots+k_n\varepsilon_n:k_1\geq k_2\geq\cdots\geq k_n\text{ and }k_i\in\mathbb{Z}\}.
\end{align*}
By the theorem of the highest weight, if $V$ is an irreducible representation of $GL_n(\mathbb{C})$, then $V$ has a highest weight, and its highest weight is dominant. If $\lambda\in P_{++}(GL_n(\mathbb{C}))$ is the highest weight of $V$, then we write $V=V^{\lambda}$ and call $V^{\lambda}$ a ``highest weight representation'' of $\lambda$. \\

For a more detailed look at the information provided in this chapter, we refer the reader to \cite{SRI}.

\newpage
\chapter{The Gini Index}

\section{The ``Standard'' Gini Index}
Much of this first section has appeared in the \textit{Journal of Integer Sequences} in a paper by the author (see \cite{Kopitzke}).\\

In part one of his 1912 book ``Variabilit\`a e Mutabilit\`a" (Variability and Mutability), the statistician Corrado Gini formulated a number of different summary statistics; among which was what is now known as the \textit{Gini index} - a measure that attempts to quantify how equitably a resource is distributed throughout a population. Referring to ``the" Gini index can be misleading, as no fewer than thirteen formulations of his famous index appeared in the original publication \cite{Origins}. Since then, many others have appeared in a variety of different fields.

The Gini index is usually defined using a construction known as a \textit{Lorenz Curve}. In ``Methods of Measuring the Concentration of Wealth", Lorenz defined this curve in the following fashion. Consider a population of people amongst whom is distributed some fixed amount of wealth.
Let ${L(x)}$ be the percentage of total wealth possessed by the poorest $x$ percent of the population. The graph ${y=L(x)}$ is the Lorenz curve of the population \cite{Lorenz}.

It is clear from this definition that $L(0)=0$ (I.E., none of the people have none of the wealth), ${L(1)=1}$ (all of the people have all of the wealth), and $L$ is non-decreasing. Since any population of people must have finite size $n$, the function ${L(x)}$ as defined above would appear to be a discrete function on the set ${\{\frac{k}{n}:k\in \mathbb{Z}\text{ and }0\leq k \leq n\}}$. However, in practice $L$ is often made continuous on the interval ${[0,1]}$ by linear interpolation \cite{Farris}.

If each person possesses the same amount of wealth, then the Lorenz curve for this distribution is the line $y=x$, which we call the ``line of equality". The area between the line of equality and the Lorenz curve of a wealth distribution provides a measurement of the wealth inequality in that population. \begin{figure}[htp]
    \centering
    \includegraphics[width=10cm]{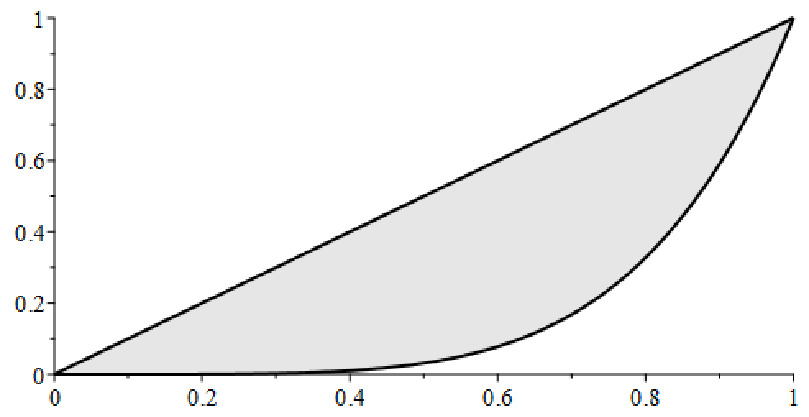}
    \caption{Area between the line of equality and a typical Lorenz curve}
\end{figure}

The maximum possible area of ${\frac{1}{2}}$ arises from the distribution in which one person controls all of the wealth (${L(1)=1}$, and ${L(x)=0}$ for all ${x\neq1}$). The Gini index of a distribution is then defined by calculating the area between the line of equality and Lorenz curve of the distribution, and normalizing this area to be between zero and one:
\[G=2\int_{0}^{1}{\big(x-L(x)\big)dx}.\]

In this paper we consider distributions of a discrete indivisible resource in a finite population, where the amount of that resource is equal to the number of people in the population. There is a natural one-to-one correspondence between the set of such distributions with $n$ people, and the set of partitions of $n$. We will then define the Gini index of a partition in a similar fashion as above.

\section{The second elementary symmetric polynomial $e_2$}
The \textit{second elementary symmetric polynomial}, $e_2$, in $n$ variables, ${x_1,\,x_2,\ldots\,x_n}$, is defined \[e_2(x_1,x_2,\ldots,x_n)=\sum_{1\leq i < j \leq n}{x_ix_j}.\] 
\begin{example}
If ${\lambda=(4,3,1,1)}$ is a partition of $9$, then 
\[e_2(\lambda)=\Big(4(3+1+1)+3(1+1)+1(1) \Big)=27.\]
\end{example}
We will make use of the following result.
\begin{lemma}
If ${\lambda=(\lambda_1, \lambda_2 ,\ldots, \lambda_{n})}$ is a partition of a positive integer $n$, then 
\[e_2(\lambda)=\binom{n}{2}-\sum_{i=1}^{n}\binom{\lambda_i}{2}.\]

\label{lemma1}
\end{lemma}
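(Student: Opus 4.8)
The plan is to give a double-counting argument in which every term is interpreted as a count of pairs of boxes in the Young diagram of $\lambda$. First I would observe that since $\lambda \vdash n$, the Young diagram has exactly $n$ boxes, so the total number of unordered pairs of boxes is $\binom{n}{2}$. Every such pair falls into exactly one of two classes: either the two boxes lie in the same row, or they lie in two different rows. The strategy is simply to count each class separately and add.

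For the pairs sharing a row: row $i$ contains $\lambda_i$ boxes, hence $\binom{\lambda_i}{2}$ pairs, so summing over rows accounts for $\sum_{i=1}^{n}\binom{\lambda_i}{2}$ pairs. For the pairs in distinct rows: a pair with one box in row $i$ and the other in row $j$ (with $i<j$) can be chosen in $\lambda_i\lambda_j$ ways, and summing over $i<j$ gives exactly $\sum_{1\le i<j\le n}\lambda_i\lambda_j = e_2(\lambda)$. Since these two classes partition the set of all pairs of boxes, I obtain $\binom{n}{2}=e_2(\lambda)+\sum_{i=1}^{n}\binom{\lambda_i}{2}$, and rearranging yields the claim.

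As an alternative I could argue purely algebraically through Newton's identity $p_1^2 = p_2 + 2e_2$, where the power sums are $p_1=\sum_i\lambda_i=n$ and $p_2=\sum_i\lambda_i^2$. This gives $e_2(\lambda)=\tfrac{1}{2}\bigl(n^2-\sum_i\lambda_i^2\bigr)$; then substituting $\binom{n}{2}=\tfrac{1}{2}(n^2-n)$ and $\binom{\lambda_i}{2}=\tfrac{1}{2}(\lambda_i^2-\lambda_i)$ and using $\sum_i\lambda_i=n$ to cancel the linear terms recovers the same identity. I would likely present the combinatorial version, since it is more transparent, and perhaps remark on the algebraic one.

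There is no serious obstacle here; the only point requiring care is the padding convention. Because the statement pads $\lambda$ out to length $n$ with zeros, I should verify that the added zero parts contribute nothing: a part $\lambda_i=0$ gives $\binom{0}{2}=0$ and makes every product $\lambda_i\lambda_j$ vanish, so the number of trailing zeros is irrelevant and the identity holds regardless. Noting this explicitly is the one bookkeeping step worth including.
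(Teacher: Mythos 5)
Your proposal is correct, and your primary argument takes a genuinely different route from the paper's. The paper proves the identity by pure algebraic manipulation: it writes $n(n-1)$ as $\left(\sum_i\lambda_i\right)\left(\sum_j\lambda_j-1\right)$, expands the square to produce $\sum_i(\lambda_i^2-\lambda_i)+2\sum_{i<j}\lambda_i\lambda_j$, and cancels the cross terms --- which is essentially the Newton's-identity computation you sketch as your alternative ($p_1^2=p_2+2e_2$ with $p_1=n$). Your main argument instead partitions the $\binom{n}{2}$ unordered pairs of boxes of the Young diagram into same-row pairs (counted by $\sum_i\binom{\lambda_i}{2}$) and distinct-row pairs (counted by $e_2(\lambda)$), which gives the identity with no computation at all and makes the nonnegativity of $e_2(\lambda)$ and of $\binom{n}{2}-\sum_i\binom{\lambda_i}{2}$ visibly obvious; it also foreshadows the paper's own proof of Proposition \ref{prop2}, which counts boxes in rows versus columns of the Young diagram in a very similar spirit. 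Your attention to the padding convention (zero parts contribute nothing to either side) is a worthwhile bookkeeping remark that the paper glosses over. Both routes are complete and correct; the combinatorial one buys transparency, the algebraic one generalizes mechanically to arbitrary real tuples rather than partitions.
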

\begin{proof}
Let ${\lambda=(\lambda_1,\lambda_2,\ldots,\lambda_{n})}$ be a partition of $n$. Note that ${\sum_{i=1}^{n}{\lambda_i}=n}$. Then 
\begin{align*}
    e_2(\lambda) &= \sum_{1\leq i < j \leq n}{\lambda_i \lambda_j}\\
    &=\binom{n}{2}-\left( \sum_{1\leq i < j \leq n}{\big(-\lambda_i \lambda_j \big)}+\binom{n}{2}\right)\\
    &=\binom{n}{2}-\frac{1}{2}\left( \sum_{1\leq i < j \leq n}{\big(-2\lambda_i \lambda_j \big)}+n(n-1)\right)\\
    &=\binom{n}{2}-\frac{1}{2}\left( \sum_{1\leq i < j \leq n}{\big(-2\lambda_i \lambda_j \big)}+\left(\sum_{i=1}^{l}{\lambda_i}\right)\left(\sum_{j=1}^{l}{\lambda_j}-1\right)\right)\\
    &=\binom{n}{2}-\frac{1}{2}\left( \sum_{1\leq i < j \leq n}{\big(-2\lambda_i \lambda_j \big)}+\sum_{i=1}^{l}{\left(\lambda_{i}^{2}-\lambda_i\right)} + \sum_{1\leq i<j\leq l}{\big(2\lambda_i \lambda_j\big)} \right)\\
    &=\binom{n}{2}-\frac{1}{2}\left( \sum_{i=1}^{n}{\lambda_{i}(\lambda_i-1)} \right)\\
    &=\binom{n}{2}-\left( \sum_{i=1}^{n}{\binom{\lambda_i}{2}} \right).\\
\end{align*}
\end{proof}

\section{The Gini Index of an Integer Partition}
As previously stated, we restrict our study of the Gini index to finite populations where the amount of a discrete indivisible resource is equal to the size of the population. In other words, there is one of said resource available for each person. The distributions of $n$ of such a resource amongst $n$ people is in one-to-one correspondence with the integer partitions of $n$.

\begin{example}
If there are $4$ dollars in a population of $4$ people, then the partition ${(3,1)}$ of $4$ would correspond to one person having $3$ dollars, one person having $1$ dollar, and the two remaining people having nothing. Whereas the partitions ${(1,1,1,1)}$ and ${(4)}$ correspond to completely equitable and completely inequitable distributions, respectively.
\end{example}

Given a partition $\lambda=(\lambda_1,\ldots,\lambda_n)$ of a positive integer $n$ (padded with zeros on the tail, if necessary), the $\textit{Lorenz curve of $\lambda$}$, $L_{\lambda}:[0,n]\longrightarrow [0,n]$, is defined as $L_{\lambda}(0)=0$, and $L_{\lambda}(x)=\sum_{i=n-k+1}^{n}{\lambda_i}$, where $1\leq k\leq n$ is the unique positive integer such that $x\in(k-1,k]$.
In other words, for $k$ from $1$ to $n$, the Lorenz curve of $\lambda$ on the interval $(k-1,k]$ is the sum of the last $k$ parts of $\lambda$, $\lambda_n+\lambda_{n-1}+\dots+\lambda_{n-k+1}$. Since total equality corresponds to the flat partition $(1^n)$, using the above definition for the Lorenz curve of a partition, we find that the line of equality is given by $y=\lceil x \rceil$.

\begin{figure}[htp]
    \centering
    \includegraphics[width=10cm]{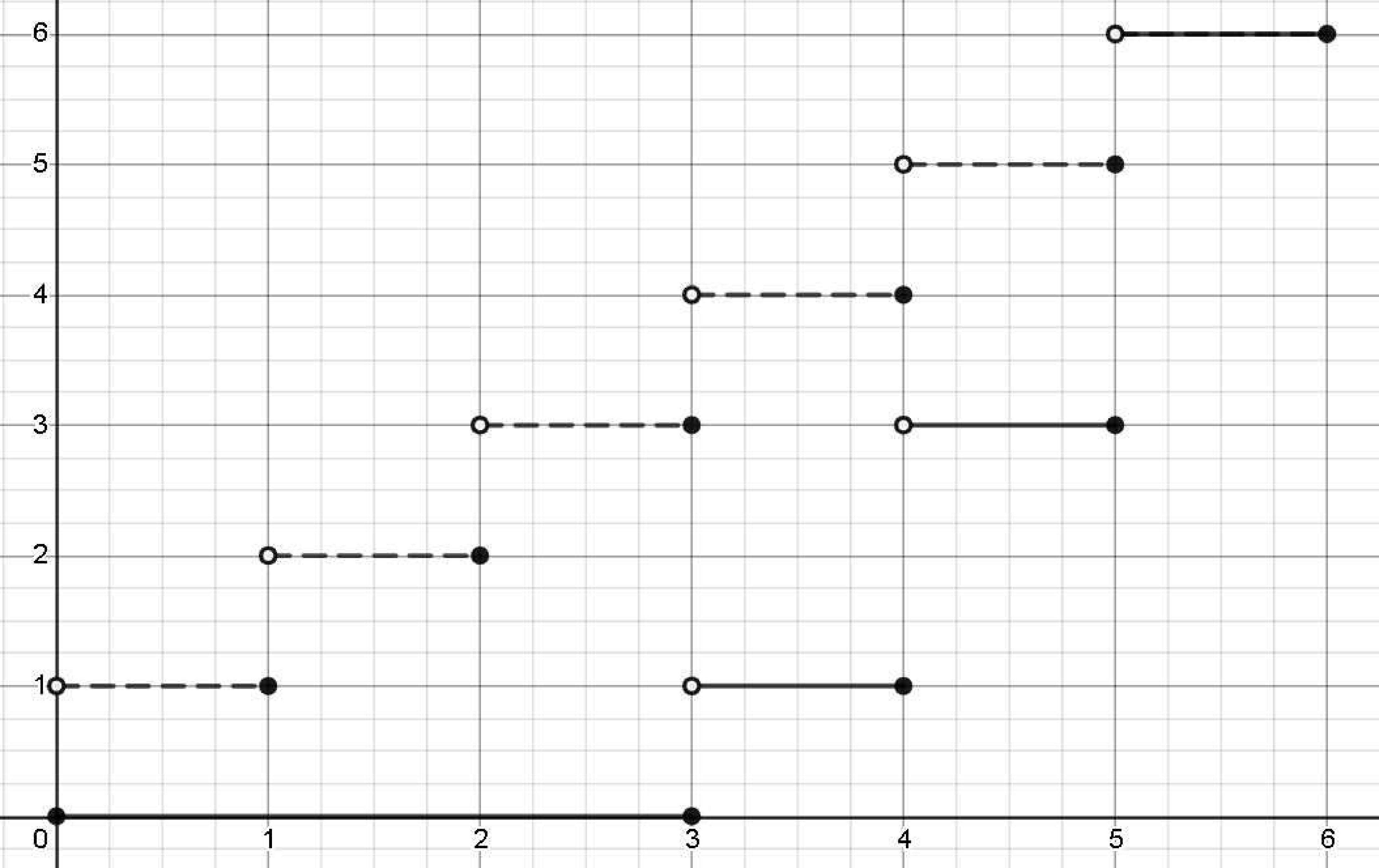}
    \caption{The line of equity (dashed) and the Lorenz curve of the partition (3,2,1) of 6 (solid).}
\end{figure}

The standard Gini index is calculated by finding the area between the line of equality and the Lorenz curve, and normalizing. In a similar fashion we define the Gini index, $g$, of a partition $\lambda=(\lambda_1,\ldots,\lambda_n)$ of $n$ by 
\begin{align*}
    g(\lambda)&= \int_{0}^{n} \left( \lceil x \rceil -L_{\lambda}(x) \right) dx\\
    &=\binom{n+1}{2}-\sum_{i=1}^{n}{i\lambda_i}\\
    &=\binom{n}{2}-\sum_{i=1}^{n}{(i-1)\lambda_i}.
\end{align*}

The ordinary Gini index is normalized to be between zero and one. For a fixed value of $n$, the function $g$ attains its maximum value of $\binom{n}{2}$ on the partition $(n)$ of $n$. So the Gini index of a partition $\lambda$ of $n$ can be normalized by dividing $g(\lambda)$ by $\binom{n}{2}$. As long as $n$, and ${g(\lambda)}$ are both known, the normalized Gini index of $\lambda$ can always be calculated in this fashion. With this in mind, we may disregard the normalization, and view $g$ itself as the integer valued ``discrete'' Gini index of a partition.

The sums
\[ \sum_{i=1}^{n}\binom{\widetilde{\lambda_i}}{2} \]
and
\[ \sum_{i=1}^{n}(i-1)\lambda_i \]
have appeared in our formulas for $e_2$ and $g$, respectively. It is known (cf. \cite{Greene}) that these two quantities are equal. This fact, in conjunction with Lemma \ref{lemma1} yield some interesting results.

\begin{proposition} 
If $\lambda$ is an integer partition, then ${g(\lambda)=e_{2}(\widetilde{\lambda})}$, where ${\widetilde{\lambda}}$ is the conjugate partition of $\lambda$.
\label{prop2}
\end{proposition}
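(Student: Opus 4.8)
The plan is to reduce the claim to the single numerical identity
$\sum_{i=1}^{n}\binom{\widetilde{\lambda}_i}{2} = \sum_{i=1}^{n}(i-1)\lambda_i$, which is precisely the fact attributed to \cite{Greene} in the paragraph preceding the statement. First I would observe that the conjugate $\widetilde{\lambda}$ is itself a partition of $n$, so Lemma~\ref{lemma1} applies to it verbatim and gives $e_2(\widetilde{\lambda}) = \binom{n}{2} - \sum_{i=1}^{n}\binom{\widetilde{\lambda}_i}{2}$. On the other side, the definition of the Gini index records $g(\lambda) = \binom{n}{2} - \sum_{i=1}^{n}(i-1)\lambda_i$. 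Subtracting the constant $\binom{n}{2}$, these two expressions agree if and only if the two sums $\sum_i\binom{\widetilde{\lambda}_i}{2}$ and $\sum_i(i-1)\lambda_i$ coincide, so the whole proposition collapses to that identity.

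To make the argument self-contained rather than merely quoting \cite{Greene}, I would prove the identity by counting, in two ways, the pairs of boxes of the Young diagram of $\lambda$ that lie in a common column. Reading by columns: the $i$-th column contains $\widetilde{\lambda}_i$ boxes, so it contributes $\binom{\widetilde{\lambda}_i}{2}$ such pairs, and summing over columns gives $\sum_{i=1}^{n}\binom{\widetilde{\lambda}_i}{2}$. Reading by boxes: because the diagram is left-justified and $\lambda$ is weakly decreasing, every box in row $i$ has exactly $i-1$ boxes directly above it in its own column; hence each of the $\lambda_i$ boxes in row $i$ is the lower member of exactly $i-1$ vertical pairs, and summing over rows gives $\sum_{i=1}^{n}(i-1)\lambda_i$. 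Both counts enumerate the same set of pairs, so the two sums are equal.

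Chaining the three observations then yields $g(\lambda) = \binom{n}{2} - \sum_i (i-1)\lambda_i = \binom{n}{2} - \sum_i \binom{\widetilde{\lambda}_i}{2} = e_2(\widetilde{\lambda})$, as desired. I do not expect a genuine obstacle here, since the identity is already cited as known; the only point demanding care is the padding convention. I would make sure to treat $\lambda$, and therefore $\widetilde{\lambda}$, as having exactly $n$ parts padded with zeros, so that Lemma~\ref{lemma1} is invoked in the same $n$-variable form in which it was proved and so that the column index ranges correctly over $i = 1,\ldots,n$; the appended zero parts contribute nothing to either sum, so they do not affect the count.
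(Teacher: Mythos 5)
Your proposal is correct and follows essentially the same route as the paper: both arguments reduce the claim to the identity $\sum_i\binom{\widetilde{\lambda}_i}{2}=\sum_i(i-1)\lambda_i$ and establish it by counting boxes of the Young diagram column-wise versus row-wise (the paper via two explicit fillings of the diagram plus conjugation, you via a single double count of vertical pairs). Your packaging is marginally tidier and your attention to the zero-padding convention is a welcome addition, but the underlying combinatorial idea is identical.
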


In light of Lemma 11, this result follows from statements in \cite{Greene}. These statements are, however, given without proof, so we will now provide proofs of these facts.

\begin{proof}
Let ${\lambda=(\lambda_1,\lambda_2,\ldots,\lambda_{n})}$ be a partition of a positive integer $n$, where ${\lambda_1\geq\lambda_2\geq\ldots\geq\lambda_{n}>0}$ and ${\sum_{i=1}^{n}{\lambda_i}=n}$. We can calculate $g{(\lambda)}$ by filling the Young diagram of shape $\lambda$ with numbers, where the entry in any box counts the number of boxes in that column that are strictly above it. For example, for the partition ${(4,3,1,1)}$, we would have
\[\young(0000,111,2,3)\,.\]
Then the sums of the values in each row are
\begin{align*}
    \sum{\big(\textrm{Entries in row }1\big)}&=0\lambda_1,\\
    \sum{\big(\textrm{Entries in row }2\big)}&=1\lambda_2,\\
    \sum{\big(\textrm{Entries in row }3\big)}&=2\lambda_3,\\
    &\shortvdotswithin{=}
    \sum{\big(\textrm{Entries in row }n\big)}&=(n-1)\lambda_{n}.\\
\end{align*}
Summing all values in the Young diagram of $\lambda$ yields $\sum_{i=1}^{n}(i-1)\lambda_{i}$. By subtracting this from ${\binom{n}{2}}$ we have
\[\binom{n}{2}-\sum{\big(\textrm{Entries in Young Diagram }i\big)}=\binom{n}{2}-\sum_{i=1}^{n}{(i-1)\lambda_i}
    =g(\lambda).\]

We can calculate ${e_2(\lambda)}$ similarly by forming a Young diagram of shape $\lambda$ where each each box's entry counts the number of boxes in the same row that are strictly to the left of its own. Again using ${(4,3,1,1)}$ as an example, we would have
\[\young(0123,012,0,0)\,.\]
In general, the ${i^{\textrm{th}}}$ row  of the diagram for $\lambda$ will be of the form 

\[\ytableausetup{mathmode, boxsize=2em}
\begin{ytableau}
\scriptstyle{0} & \scriptstyle{1} & \none[\dots] & \scriptstyle{\lambda_i-2} & \scriptstyle{\lambda_i-1} \\
\end{ytableau}\,,\]

so the sum of the boxes in the ${i^{\textrm{th}}}$ row will be ${\binom{\lambda_i}{2}}$. Summing all of the entries in the Young diagram of $\lambda$ and subtracting this from ${\binom{n}{2}}$ yields
\[\binom{n}{2}-\sum_{i=1}^{n}{\big(\textrm{Entries in row }i\big)}=\binom{n}{2}-\sum_{i=1}^{n}\binom{\lambda_i}{2}=e_2(\lambda),\]
where the last equality is by Lemma \ref{lemma1}. Since ${g(\lambda)}$ is calculated by counting boxes in the columns of the Young diagram of $\lambda$, and ${e_2(\lambda)}$ is calculated by counting boxes in the rows, it follows that $g(\lambda)=e_2(\widetilde{\lambda})$.

\end{proof}

\begin{proposition}
Let $\lambda$ and $\mu$ be partitions of $n$. If $\mu \prec \lambda$ then $g(\mu)<g(\lambda)$ and $e_{2}(\lambda)<e_{2}(\mu)$.
\label{prop3}
\end{proposition}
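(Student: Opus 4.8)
The plan is to reduce both inequalities to the defining inequalities of the dominance order by rewriting $g$ in terms of the partial sums of a partition. Writing $\Lambda_k(\lambda)=\sum_{j=1}^{k}\lambda_j$ for the $k$-th partial sum (so that $\mu\preceq\lambda$ means exactly $\Lambda_k(\mu)\le\Lambda_k(\lambda)$ for all $k$), I would first establish the identity
\[ g(\lambda)=\sum_{k=1}^{n}\Lambda_k(\lambda)-\binom{n+1}{2}. \]
This follows from the formula $g(\lambda)=\binom{n+1}{2}-\sum_{i=1}^{n}i\lambda_i$ recorded above, together with the interchange of summation $\sum_{k=1}^{n}\Lambda_k(\lambda)=\sum_{j=1}^{n}(n+1-j)\lambda_j=(n+1)n-\sum_{j=1}^{n}j\lambda_j$, using $\sum_{j}\lambda_j=n$ (equivalently, one can obtain it by Abel summation). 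I would sanity-check it on $(n)$ and $(1^n)$, where it returns $\binom{n}{2}$ and $0$, matching the extreme values noted earlier.

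With this identity the first inequality is immediate. If $\mu\prec\lambda$ then $\Lambda_k(\mu)\le\Lambda_k(\lambda)$ for every $k$; moreover at least one inequality is strict, since $\Lambda_k(\mu)=\Lambda_k(\lambda)$ for all $k$ would force $\mu_k=\Lambda_k(\mu)-\Lambda_{k-1}(\mu)=\lambda_k$ for all $k$, contradicting $\mu\neq\lambda$. Summing the $n$ inequalities (the term $k=n$ contributes equally, since $\Lambda_n=n$ for both) and subtracting the constant $\binom{n+1}{2}$ yields $g(\mu)<g(\lambda)$.

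For the second inequality I would not repeat the computation but instead invoke Proposition \ref{prop2} together with the fact, recalled in the preliminaries, that conjugation $\lambda\mapsto\widetilde{\lambda}$ is an antiautomorphism of the dominance lattice. Concretely, $\mu\prec\lambda$ implies $\widetilde{\lambda}\prec\widetilde{\mu}$; applying the already-proved first part to this pair gives $g(\widetilde{\lambda})<g(\widetilde{\mu})$; and since Proposition \ref{prop2} gives $g(\widetilde{\nu})=e_2(\nu)$ for every partition $\nu$ (conjugation being an involution), this reads exactly $e_2(\lambda)<e_2(\mu)$.

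I do not anticipate a genuine obstacle: the content lies entirely in the observation that $g$ is, up to an additive constant, the sum of the partial sums of $\lambda$, which is precisely the data the dominance order compares. The only points demanding care are the strictness---ensuring that $\mu\neq\lambda$ produces a strict inequality in at least one partial sum rather than merely $\le$ throughout---and the bookkeeping that the final ($k=n$) term cancels and so cannot itself be the source of that strictness.
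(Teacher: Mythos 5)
Your proof is correct, but it takes a genuinely different route from the paper. The paper first reduces to the case where $\lambda$ covers $\mu$, invoking Brylawski's classification of covering relations in the dominance lattice (a single box moves from row $k$ to a higher row $i$), computes the resulting change in $e_2$ explicitly via the binomial-coefficient formula of Lemma \ref{lemma1} to get $e_2(\mu)>e_2(\lambda)$, transfers this to $g$ via conjugation and Proposition \ref{prop2}, and finishes by transitivity. You instead prove the $g$-inequality directly from the identity $g(\lambda)=\sum_{k=1}^{n}\Lambda_k(\lambda)-\binom{n+1}{2}$, which exhibits $g$ (up to an additive constant) as the sum of exactly the partial sums that the dominance order compares; summing the defining inequalities, with strictness guaranteed because total equality of partial sums would force $\mu=\lambda$, gives $g(\mu)<g(\lambda)$ at once, and the $e_2$ statement then follows by the same conjugation argument the paper uses, run in the opposite direction. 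Your identity checks out ($\sum_k\Lambda_k(\lambda)=(n+1)n-\sum_j j\lambda_j$, so the two expressions for $g$ agree), and your handling of strictness is careful and correct. What the paper's approach buys is local, quantitative information --- an explicit formula for the decrement of $e_2$ across a single covering move --- at the cost of citing the nontrivial cover classification and an extra transitivity step; what your approach buys is brevity and a conceptual explanation of \emph{why} $g$ is strictly monotone for dominance, with no need to know what the covers look like.
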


The normalized Gini index on $\mathbb{R}^n$, discussed in \cite{Inequalities}, is equal to $\frac{2g}{n^2}$ when restricted to $P_n$. This function is known to be strictly Schur convex, so Proposition \ref{prop3} can be partially deduced from this fact. A complete proof of Proposition \ref{prop3} that does not utilize these facts is presented below.

\begin{proof}
Let $\lambda=(\lambda_1,\lambda_2,\ldots,\lambda_n)$ and $\mu=(\mu_1,\mu_2,\ldots,\mu_n)$ be partitions of $n$ (padded with zeros in their tails, if necessary). Suppose that $\lambda$ covers $\mu$, I.E. there is no partition $\rho$ of $n$ such that $\mu \prec \rho \prec \lambda$. Now $\lambda$ covers $\mu$ if and only if 
\begin{align*}
    \lambda_i &= \mu_i +1,\\
    \lambda_k &= \mu_k -1, and\\
    \lambda_j &= \mu_j,
\end{align*}
for all $j \neq i$ or $k$, and either $k=i+1$ or $\mu_i=\mu_k$ \cite{Brylawski}. In other words, $\lambda$ covers $\mu$ if and only if all but two of the rows (row $i$ and $k$, with $i<k$) in the Young diagrams of $\lambda$ and $\mu$ are of the same length, and the diagram of $\lambda$ can be obtained from that of $\mu$ by removing the last box from the $k^{\textrm{th}}$ row, and appending it to end of the $i^{\textrm{th}}$ row.

Begin with the Young diagram of $\mu$ and, as in the proof of Proposition \ref{prop2}, fill the diagram with numbers so that each box's entry counts the number of boxes weakly to the left of it.
\[\ytableausetup
{mathmode, boxsize=2em}
\begin{ytableau}
\scriptstyle{0} & \scriptstyle{1} & \none[\dots] & \scriptstyle{\mu_1-4} & \scriptstyle{\mu_1-3} & \scriptstyle{\mu_1-2} & \scriptstyle{\mu_1-1} \\
\none[\vdots]\\
\scriptstyle{0} & \scriptstyle{1} & \none[\dots] & \scriptstyle{\mu_i-2} & \scriptstyle{\mu_i-1} \\
\none[\vdots]\\
\scriptstyle{0} & \scriptstyle{1} & \none[\dots] & \scriptstyle{\mu_k-2} & \scriptstyle{\mu_k-1} \\
\none[\vdots]\\
\scriptstyle{0} & \scriptstyle{1} & \none[\dots] & \scriptstyle{\mu_n-1} \\
\end{ytableau}\]
From row $k$ we remove the box containing $\mu_k-1$ and append it to the end of row $i$ to obtain a diagram of shape $\lambda$.
\[\ytableausetup
{mathmode, boxsize=2em}
\begin{ytableau}
\scriptstyle{0} & \scriptstyle{1} & \none[\dots] & \scriptstyle{\mu_1-4} & \scriptstyle{\mu_1-3} & \scriptstyle{\mu_1-2} & \scriptstyle{\mu_1-1} \\
\none[\vdots]\\
\scriptstyle{0} & \scriptstyle{1} & \none[\dots] & \scriptstyle{\mu_i-2} & \scriptstyle{\mu_i-1} & \scriptstyle{\mu_k-1}\\
\none[\vdots]\\
\scriptstyle{0} & \scriptstyle{1} & \none[\dots] & \scriptstyle{\mu_k-2}  \\
\none[\vdots]\\
\scriptstyle{0} & \scriptstyle{1} & \none[\dots] & \scriptstyle{\mu_n-1} \\
\end{ytableau}\]
But $i < k$, hence $\mu_k-1 \leq \mu_i-1$, and the corresponding filling of the diagram for $\lambda$ would have the last cell in row $i$ containing $\mu_i$, which is strictly greater than $\mu_k-1$. Thus the sum of all numbers in the diagram for $\lambda$ is 
\[\sum_{\substack{j=1 \\ j\neq i,k}}^{n}{\binom{\mu_j}{2}}+{\binom{\mu_i+1}{2}} + {\binom{\mu_k-1}{2}}, \]
and the sum of all numbers in the diagram for $\mu$ is
\[\sum_{j=1}^{n}{\binom{\mu_j}{2}}. \]
By Lemma \ref{lemma1}, we have
\begin{align*}
    e_2(\mu)-e_2(\lambda) =& \,\binom{n}{2}-\sum_{j=1}^{n}{\binom{\mu_j}{2}}-\\
    &\,\left( \binom{n}{2}-\left(\sum_{\substack{j=1 \\ j\neq i,k}}^{n}{\binom{\mu_j}{2}}+{\binom{\mu_i+1}{2}} + {\binom{\mu_k-1}{2}} \right)\right)\\
    =&\,{\binom{\mu_i+1}{2}}+{\binom{\mu_k-1}{2}}-{\binom{\mu_i}{2}}{\binom{\mu_k}{2}}\\
    =&\,\frac{(\mu_i)(\mu_i+1-\mu_i+1)+(\mu_k-1)(\mu_k-2-\mu_k+1)}{2}\\
    =&\,\frac{2\mu_i+1-\mu_k}{2}\\
    >&\,0.
\end{align*}
So $e_2(\mu)>e_2(\lambda)$. Moreover $\mu \prec \lambda$ if and only if $\widetilde{\lambda} \prec \widetilde{\mu}$. Hence $\widetilde{\mu}$ covers $\widetilde{\lambda}$, and by Proposition \ref{prop2}, $g(\mu)< g(\lambda).$ The general case follows by transitivity.
\end{proof}

\section{Antichains in the Dominance Order}
The converse of Proposition \ref{prop3} does not hold, in general, as seen in Example \ref{ex}.
\begin{example}
\label{ex}
Let $\lambda=(5,5)$ and $\mu=(6,2,2)$ be partitions of $n=10$. Then
\begin{align*}
    g(\mu)=39&<g(\lambda)=40,\text{ and}\\
    e_2(\lambda)=25&<e_2(\mu)=28,
\end{align*}
but $\mu\nprec\lambda$.
\end{example}
The contrapositive of Propasition \ref{prop3}, however, provides us with an easily calculated lower bound on the width of $P_n$. 
\begin{corollary}(Contrapositive to Prop. \ref{prop3})
Let $\lambda\neq \mu$ be partitions of $n$. If
\begin{align*}
    g(\mu)&= g(\lambda),\\
    e_2(\mu)&=e_2(\lambda),\\
    g(\mu)&>g(\lambda)\text{ and }e_2(\mu)>e_2(\lambda),\text{ or}\\
    g(\mu)&<g(\lambda)\text{ and }e_2(\mu)<e_2(\lambda),
\end{align*}
then $\lambda$ and $\mu$ are incomparable. 
\end{corollary}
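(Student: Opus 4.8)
The plan is to prove the statement exactly as its label suggests: by contraposition. That is, I would assume that $\lambda$ and $\mu$ are \emph{comparable} (and distinct) and show that none of the four listed hypotheses can then hold, which is logically equivalent to the asserted implication. The first thing to notice is that Proposition~\ref{prop3} is stated asymmetrically in $\lambda$ and $\mu$, so before starting the case analysis I would record its companion, obtained by interchanging the two partitions: if $\lambda \prec \mu$, then applying Proposition~\ref{prop3} with the roles of $\lambda$ and $\mu$ reversed yields $g(\lambda) < g(\mu)$ together with $e_2(\mu) < e_2(\lambda)$.

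Next I would split the comparable-and-distinct assumption into its two possibilities. If $\mu \prec \lambda$, Proposition~\ref{prop3} gives $g(\mu) < g(\lambda)$ and $e_2(\lambda) < e_2(\mu)$, so the two strict inequalities point in \emph{opposite} directions. If instead $\lambda \prec \mu$, the companion statement gives $g(\mu) > g(\lambda)$ and $e_2(\mu) < e_2(\lambda)$, again strict and again opposite in direction. The upshot I want to extract is that comparability of distinct partitions forces exactly one of the two sign patterns $\bigl(g(\mu) < g(\lambda),\, e_2(\mu) > e_2(\lambda)\bigr)$ or $\bigl(g(\mu) > g(\lambda),\, e_2(\mu) < e_2(\lambda)\bigr)$, in both of which $g$ and $e_2$ move oppositely and no equality occurs.

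Finally I would verify that each of the four hypotheses of the corollary is incompatible with both of these sign patterns. The equalities $g(\mu) = g(\lambda)$ and $e_2(\mu) = e_2(\lambda)$ are excluded because both patterns make the relevant inequality strict; and the two ``same-direction'' hypotheses $\bigl(g(\mu) > g(\lambda)\text{ and } e_2(\mu) > e_2(\lambda)\bigr)$ and $\bigl(g(\mu) < g(\lambda)\text{ and } e_2(\mu) < e_2(\lambda)\bigr)$ are excluded because in each admissible pattern $g$ and $e_2$ go in opposite directions. Hence whenever any listed hypothesis holds, $\lambda$ and $\mu$ cannot be comparable, so they are incomparable, which is the claim.

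I do not expect a serious obstacle here, since the result is a pure logical contrapositive of Proposition~\ref{prop3} augmented by its symmetric companion; the only points requiring care are to invoke Proposition~\ref{prop3} in \emph{both} directions and to keep track that every inequality it produces is \emph{strict}. A convenient bookkeeping device I would mention is the $3\times 3$ grid of possible $(<,=,>)$ outcomes for $g$ against $e_2$: the four hypotheses together cover precisely the seven cells in which the two relations agree or at least one is an equality, leaving only the two ``opposite-direction'' cells as the outcomes compatible with comparability.
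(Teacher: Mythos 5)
Your proposal is correct and matches the paper's intent exactly: the paper offers no written proof, simply labeling the corollary as the contrapositive of Proposition \ref{prop3}, and your argument is the careful spelling-out of that observation, including the necessary (and easy to overlook) step of applying Proposition \ref{prop3} in both directions to handle the case $\lambda \prec \mu$. Your accounting of the seven excluded cells in the $3\times 3$ sign grid is also accurate.
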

In other words, we can find lower bounds on the size of the maximal level set of $g$ on $P_n$. As we will see shortly, these lower bounds can be calculated using the generating function of Proposition \ref{prop4}.

It is often useful in Algebraic Combinatorics to record a discrete data set in the coefficients or powers of a formal power series. We call these power series ``generating functions'' for the data set. By ``formal'' we mean that the convergence of the series is immaterial. Any variables appearing in the series are taken as indeterminates rather than numbers. Alternatively, one may consider a formal power series as an ordinary power series that converges only at zero.

We define a generating function  for the Gini index $g(\lambda)$ of an integer partition $\lambda$ by
\[G(q,x)=\sum_{n=1}^{\infty}{\sum_{\lambda \vdash n}{q^{\left(\binom{n+1}{2}-g(\lambda)\right)}x^n}}.\]

Perhaps the most widely known example of a generating function is that of the integer partition function, ${P(n)}$, which counts the number of partitions of the integer $n$. 

\begin{example}
${n=5}$ has partitions

\begin{center}
    $(1,1,1,1,1)$,
    $(2,1,1,1)$,
    $(2,2,1)$,
    $(3,1,1)$,
    $(3,2)$,
    $(4,1)$, and
    $(5)$,
\end{center}
so ${P(5)=7.}$ 
\end{example}
It is well known (see, for example, \cite{Andrews}) that ${P(n)}$ has generating function
\[\prod_{n=1}^{\infty}{\frac{1}{1-x^n}}=\sum_{n=0}^{\infty}{P(n)x^n},\]
Where $P(0)$ is defined to be $1$.

In light of our previous results, we obtain a similar equality for ${G(q,x)}$.
\begin{proposition}
\[\prod_{n=1}^{\infty}{\frac{1}{1-q^{\binom{n+1}{2}}x^n}}-1=\sum_{n=1}^{\infty}{\sum_{\lambda \vdash n}{q^{\left(\binom{n+1}{2}-g(\lambda)\right)}x^n}}\]
\label{prop4}
\end{proposition}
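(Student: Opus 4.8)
The plan is to expand the left-hand product into a formal power series, identify its coefficients with a sum over all nonempty partitions, and then match the result term-by-term with the right-hand side via conjugation.

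First I would expand each factor as a geometric series,
\[\frac{1}{1-q^{\binom{n+1}{2}}x^n}=\sum_{m_n=0}^{\infty}q^{m_n\binom{n+1}{2}}x^{m_n n}.\]
Multiplying over all $n$ and collecting terms, a monomial in the expansion is specified by a choice of nonnegative integers $(m_1,m_2,\ldots)$, almost all zero, and contributes $q^{\sum_n m_n\binom{n+1}{2}}x^{\sum_n m_n n}$. Such a sequence is precisely the multiplicity data of a partition $\mu$ in which the part $n$ occurs $m_n$ times; writing $m_n(\mu)$ for this multiplicity and $|\mu|=\sum_n m_n(\mu)\,n$, the product becomes $\sum_{\mu}q^{\sum_n m_n(\mu)\binom{n+1}{2}}x^{|\mu|}$, the sum ranging over all partitions including the empty one. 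Subtracting $1$ removes the empty-partition term, so the left-hand side equals $\sum_{N=1}^{\infty}\sum_{\mu\vdash N}q^{\sum_n m_n(\mu)\binom{n+1}{2}}x^N$.

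It then remains to check that, for each $N$, the coefficient of $x^N$ agrees with the right-hand side. Since the definition of $g$ gives $\binom{N+1}{2}-g(\lambda)=\sum_i i\lambda_i$, the right-hand coefficient of $x^N$ is $\sum_{\lambda\vdash N}q^{\sum_i i\lambda_i}$. I would prove that the two exponent-generating sums coincide by conjugation: set $\lambda=\widetilde{\mu}$, which is a bijection on partitions of $N$, and show $\sum_n m_n(\mu)\binom{n+1}{2}=\sum_i i\,\widetilde{\mu}_i$. The cleanest route is a double count of $\sum_{\text{cells of }\mu}(\text{column index of the cell})$: summing row by row, a part of $\mu$ equal to $n$ contributes $1+2+\cdots+n=\binom{n+1}{2}$, giving $\sum_n m_n(\mu)\binom{n+1}{2}$; summing by the columns of $\mu$, equivalently the rows of $\widetilde{\mu}$, the cells with column index $i$ number $\widetilde{\mu}_i$, giving $\sum_i i\,\widetilde{\mu}_i$. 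Alternatively one can use $m_n(\mu)=\widetilde{\mu}_n-\widetilde{\mu}_{n+1}$ together with summation by parts and the identity $\binom{n+1}{2}-\binom{n}{2}=n$.

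Putting these together, conjugation matches each monomial on the left with exactly one monomial on the right, preserving both the $x$-degree and the $q$-degree, and the identity follows. I expect the main obstacle to be bookkeeping rather than conceptual: justifying the passage from the formal infinite product to the sum over partitions (the rearrangement is legitimate because, for each fixed power of $x$, only finitely many factors contribute), and confirming that conjugation is exactly the bijection that aligns the two exponents $\sum_n m_n(\mu)\binom{n+1}{2}$ and $\sum_i i\,\widetilde{\mu}_i$.
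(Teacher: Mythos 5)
Your proposal is correct and follows essentially the same route as the paper: expand each factor as a geometric series, identify the coefficient of $x^N$ as $\sum_{\mu\vdash N}q^{\sum_j\binom{\mu_j+1}{2}}$, and use conjugation as the bijection that converts this exponent into $\binom{N+1}{2}-g(\lambda)$. The only cosmetic difference is that you establish the exponent identity $\sum_j\binom{\mu_j+1}{2}=\sum_i i\,\widetilde{\mu}_i$ by a direct double count of column indices, whereas the paper reaches the same conclusion by citing its Lemma~\ref{lemma1} and Proposition~\ref{prop2}.
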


\begin{proof}
We will show that the power series about $x=0$ of the product \begin{equation}
    {\prod_{n=1}^{\infty}{\frac{1}{1-q^{\binom{n+1}{2}}x^n}}}-1
    \label{eq1}
\end{equation} has as its general coefficient \[{\sum_{\lambda\vdash n}{q^{\left(\binom{n+1}{2}-g(\lambda)\right)}}}.\]
Considering each factor of the product as a geometric series, we have

\begin{align*}
    \prod_{n=1}^{\infty}{\frac{1}{1-q^{\binom{n+1}{2}}x^{n}}}=
    &
    \frac{1}{\left(1-q^{\binom{2}{2}}x\right)}
    \cdot
    \frac{1}{\left(1-q^{\binom{3}{2}}x^2\right)}
    \cdot
    \frac{1}{\left(1-q^{\binom{4}{2}}x^3\right)}
    \cdot
    \frac{1}{\left(1-q^{\binom{5}{2}}x^4\right)}
    \cdot
    \cdots
    \\
    =&
    \left(1+q^{\binom{2}{2}}x+q^{2{\binom{2}{2}}}x^2+q^{3{\binom{2}{2}}}x^3+q^{4{\binom{2}{2}}}x^4+\cdots \right)\cdot
    \\
    &
    \left(1+q^{\binom{3}{2}}x^2+q^{2{\binom{3}{2}}}x^4+q^{3{\binom{3}{2}}}x^6+q^{4{\binom{3}{2}}}x^8+\cdots \right)\cdot
    \\
    &
    \left(1+q^{{\binom{4}{2}}}x^3+q^{2{\binom{4}{2}}}x^6+q^{3{\binom{4}{2}}}x^9+q^{4{\binom{4}{2}}}x^{12}+\cdots \right)\cdot
    \\
    &
    \left(1+q^{{\binom{5}{2}}}x^4+q^{2{\binom{5}{2}}}x^8+q^{3{\binom{5}{2}}}x^{12}+q^{4{\binom{5}{2}}}x^{16}+\cdots \right)\cdot\cdots.
\end{align*}

If we distribute and simplify, for example, the coefficient of $x^4$, we see that it is
\[ q^{4{\binom{2}{2}}}+q^{2{\binom{3}{2}}}+q^{{\binom{2}{2}}+{\binom{4}{2}}}+q^{2{\binom{2}{2}}+{\binom{3}{2}}}+q^{{\binom{5}{2}}},  \]
where each of the terms correspond to the partitions
\[(1,1,1,1),\,(2,2),\,(3,1),\,(2,1,1),\,\textrm{and}\,(4),\]
respectively, by 
\[
    (\lambda_1,\lambda_2,\ldots,\lambda_l)\mapsto q^{\left(\sum_{i=1}^{l}{\binom{\lambda_i+1}{2}}\right)}.
\]
This is true, in general, for the coefficient of $x^n$, for all positive integers $n$. To see this, consider the coefficient on $x^n$ in the power series expansion of equation \ref{eq1}. If we set $q=1$ in the product of equation \ref{eq1}, we obtain the generating function of $P(n)$ (the number of partitions of $n$). Hence there are $P(n)$ different ways to obtain a power of $x^n$. So the $x^n$ term in equation \ref{eq1} will be of the form
\begin{equation*}
    \sum_{j=1}^{P(n)}\prod _{i=1}^{m_j}q^{\left(a_{j,i}{\binom{\lambda_{j,i}+1}{2}}\right)}x^{\left(a_{j,i}\lambda_{j,i}\right)},
\end{equation*}
where $a_{j,i},\lambda_{j,i}>0$, and $\sum_{i=1}^{m_j}a_{j,i}\lambda_{j,i}=n$. Thus the coefficient on $x^n$ will be
\begin{equation}
    \sum_{j=1}^{P(n)}q^{\left(\sum_{i=1}^{m_j}{ a_{j,i}{\binom{\lambda_{j,i}+1}{2}} }\right) }.
    \label{eq2}
\end{equation}
Since each ${\binom{\lambda_{j,i}+1}{2}}$ in equation \ref{eq2} comes from a different term of the product in equation \ref{eq1}, we have that $\lambda_{j,i}\neq\lambda_{j,k}$ whenever $i\neq k$. Therefore, by reordering, we may choose the power $a_{j,i}{\binom{\lambda_{j,i}+1}{2}}$ on $q$ so that $\lambda_{j,i}>\lambda_{j,{i+1}}> 0$, for $1\leq i < m_j$. It follows that $(\lambda_{j,1}^{a_{j,1}},\lambda_{j,2}^{a_{j,2}},\ldots,\lambda_{j,{m_j}}^{a_{j,{m_j}}})$ is a partition of $n$, where $\lambda_{j,i}$ is repeated $a_{j,i}$ times.

Again, using the generating function for $P(n)$, the ways of writing $x^n$ as a product $\prod_{i}x^{\left( a_{j,i}\lambda_{j,i} \right)}$ (where $a_{j,i}\lambda_{j,i}>0$) is in bijection with the partitions of $n$. Since each of the sums $\sum_{i=1}^{m_j}a_{j,i}\lambda_{j,i}=n$ have distinct summands for all $1\leq j \leq P(n)$, it follows that the sums $\sum_{i=1}^{m_j}a_{j,i}{\binom{\lambda_{j,i}+1}{2}}$ are all distinct for different values of $j$. In other words, every partition $( \lambda_{j,1}^{a_{j,1}},\ldots,\lambda_{j,{m_j}}^{a_{j,{m_j}}} )$ of $n$ appears as a power in equation \ref{eq2}. Hence equation \ref{eq2} is equal to
\begin{equation*}
 \sum_{\lambda\vdash n} q^{\left(\sum_{i=1}^{n}\binom{\lambda_i+1}{2}\right)}.
\end{equation*}
By Lemma \ref{lemma1}, $e_2(\lambda)=\binom{n+1}{2}-\sum_{i=1}^{n}\binom{\lambda_i+1}{2}$, thus 
\begin{equation*}
    \sum_{\lambda\vdash n} q^{\left(\sum_{i=1}^{n}\binom{\lambda_i+1}{2}\right)}=\sum_{\lambda\vdash n} q^{\left(\sum_{i=1}^{n} \binom{n+1}{2}-e_2(\lambda)\right)}.
\end{equation*}
Finally, by Proposition \ref{prop2}, we have that the general coefficient on $x^n$ in the power series expansion of equation \ref{eq1} is
\begin{equation*}
    \sum_{\lambda\vdash n} q^{\left(\sum_{i=1}^{n} \binom{n+1}{2}-g(\lambda)\right)}.
\end{equation*}
\end{proof}

We can use $G(q,x)$ to find lower bounds on the width of $P_n$ by calculating the cardinalities of the maximum level sets of $g$ on $P_n$. In particular, the ``size'' of these level sets will be the largest coefficient on the powers of $q$ that form the coefficient of $x^n$. Expanding $G(q,x)$ yields
\begin{align*}
        \sum_{n=1}^{\infty}\sum_{\lambda \vdash n}q^{\left(\binom{n+1}{2}-g(\lambda)\right)}x^{n}
        &=qx+(q^2+q^3)x^2+(q^3+q^4+q^6)x^3\\
        &+(q^4+q^5+q^6+q^7+q^{10})x^4\\
        &+(q^5+q^6+q^7+q^8+q^9+q^{11}+q^{15})x^5\\
        &+(\cdots+{2}q^9+\cdots)x^6+(\cdots+2q^{10}+\cdots)x^7\\
        &+(\cdots+2q^{11}+\cdots)x^8+(\cdots+3q^{15}+\cdots)x^9\\
        &+\cdots.
    \end{align*}
So the size of the maximal level sets of $g$ are 1, 1, 1, 1, 1, 2, 2, 2, and 3, on $P_1$ through $P_9$, respectively.

Let $b(n)$ denote the size of the maximal level set of $g$ on $P_n$ (A337206 in \cite{OEIS}). Proposition \ref{prop3} implies that $b(n)\leq a(n)$ for all positive integers $n$, where $a(n)$ is the size of the maximum antichain in $P_n$ (A076269 in \cite{OEIS}). There are known asymptotic bounds on $a(n)$. The best known bounds follow from Dilworth's Theorem; since $P(n)$ (the number of partitions of $n$) is clearly an upper bound on $a(n)$, we have that $a(n)\geq P(n)/(h(P_n)+1)$, where $h(P_n)$ is the length of a maximal chain in $P_n$. This argument was applied in \cite{Early} without proof, so we will sketch the proof and use the same reasoning to acquire an asymptotic lower bound on the sequence $b(n)$. Formulas for the sequence $h(P_n)$ have been known for some time, but the asymptotics were not fully understood until Greene and Kleitman (\cite{Greene} ) proved that
\[h(P_n)\sim (2n)^{3/2}/3.\] Combining this with the famous result of Hardy and Ramanujan ( \cite{Hardy} ), 
\[P(n)\sim\frac{e^{\pi\sqrt{2n/3}}}{4n\sqrt{3}}, \]
we see that
\[ \Omega\left( \frac{e^{\pi\sqrt(2n/3)}}{n^{5/2}} \right)\leq a(n) \leq O\left( \frac{e^{\pi\sqrt{2n/3}}}{n}\right). \]

We can similarly obtain a lower bound on $b(n)$. Observe that $g$ may take on values between $0$ and $\binom{n}{2}$. As such, the level sets $b(n)$ of $g$ on $P_n$ are bounded below by
\[ \frac{P(n)}{\binom{n}{2}}\leq b(n). \]
Applying the same formulas as above, yields the following.
\begin{proposition}
\[ \Omega\left(\frac{e^{\pi\sqrt{2n/3}}}{n^3}\right)\leq b(n), \]
for all $n>1$.
\label{Gini Bound}
\end{proposition}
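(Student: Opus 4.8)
The plan is to derive the bound from two ingredients that are already in hand: the elementary counting inequality $P(n)/\binom{n}{2}\le b(n)$ recorded just above, and the Hardy--Ramanujan asymptotic for the partition function $P(n)$. No new structural input about the dominance order is needed; the argument is purely a pigeonhole estimate followed by an asymptotic simplification.

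First I would justify the counting inequality. Since the discrete Gini index $g$ takes only the integer values $0,1,\ldots,\binom{n}{2}$ on $P_n$, its image has at most $\binom{n}{2}+1$ elements, while its domain $P_n$ has $P(n)$ elements. By the pigeonhole principle some fiber $g^{-1}(c)$ must contain at least $P(n)/\bigl(\binom{n}{2}+1\bigr)$ partitions, and since $b(n)$ is the size of the largest such fiber we obtain
\[ b(n)\ge\frac{P(n)}{\binom{n}{2}+1}\ge\frac{P(n)}{\binom{n}{2}} \]
for $n$ large (the harmless replacement of $\binom{n}{2}+1$ by $\binom{n}{2}$ only affects the implied constant).

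Next I would substitute the asymptotic estimates. Using the Hardy--Ramanujan formula (\cite{Hardy})
\[ P(n)\sim\frac{e^{\pi\sqrt{2n/3}}}{4n\sqrt{3}} \]
together with the elementary estimate $\binom{n}{2}=\tfrac{1}{2}n(n-1)\sim\tfrac{1}{2}n^2$, the lower bound becomes
\[ \frac{P(n)}{\binom{n}{2}}\sim\frac{e^{\pi\sqrt{2n/3}}/(4n\sqrt{3})}{n^2/2}=\frac{e^{\pi\sqrt{2n/3}}}{2\sqrt{3}\,n^3}, \]
which is $\Omega\!\left(e^{\pi\sqrt{2n/3}}/n^3\right)$. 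Chaining this with the inequality $b(n)\ge P(n)/\binom{n}{2}$ gives the claimed asymptotic lower bound on $b(n)$.

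The argument involves no real obstacle; the only points requiring care are bookkeeping ones. I would make the passage from the asymptotic $\sim$ to a genuine $\Omega$ bound explicit by fixing a positive constant $c$ and an $N$ beyond which the inequality $b(n)\ge c\,e^{\pi\sqrt{2n/3}}/n^3$ holds, then absorbing the finitely many cases $1<n\le N$ into the constant so that the bound is valid for all $n>1$ as stated. It is worth remarking that this estimate is surely far from sharp, since it discards the antichain structure guaranteed by Proposition \ref{prop3}; any level set of $g$ is in fact an antichain in $P_n$, so one expects the true growth of $b(n)$ to be closer to that of the maximum antichain size $a(n)$.
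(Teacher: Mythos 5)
Your proposal is correct and follows essentially the same route as the paper: bound $b(n)$ below by $P(n)$ divided by the number of possible values of $g$, then apply the Hardy--Ramanujan asymptotic for $P(n)$ together with $\binom{n}{2}\sim n^2/2$. One small slip: in the displayed chain the inequality $P(n)/\bigl(\binom{n}{2}+1\bigr)\ge P(n)/\binom{n}{2}$ points the wrong way, but as you yourself note the discrepancy is absorbed into the implied constant, so the conclusion stands.
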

While not as ``sharp'' as the bound obtained via Dilworth's Theorem, Proposition \ref{Gini Bound} shows that the level sets of the Gini index provide a good lower bound on the size of the maximum antichain in $P_n$. 

\section{Expected Value of the Gini Index on $P_n$}
With the results from the previous section, a natural follow-up question would be, ``What is the expected value of the Gini index?'' To formalize this question, we will view $g$ as a real-valued discrete random variable with sample space $P_n$. To each outcome $m\in\left[0,\binom{n}{2}\right]$, we assign the probability
\[ \textbf{P}(g= m)=\frac{|g^{-1}(m)|}{P(n)}.\]
That is, the probability that $g=m$ is the number of partitions $\lambda\in P_n$ such that $g(\lambda)=m$, divided by the number of partitions of $n$. For a fixed valued of $n\in\N$, computing the expected value of the Gini index is as simple as calculating the partitions of $n$ (which, in reality, is not at all simple). 
\begin{example}
If $n=6$, the partitions of $n$ are
\begin{center}
    (1,1,1,1,1,1),\\
    (2,1,1,1,1),\\
    (2,2,1,1),\\
    (2,2,2),\\
    (3,1,1,1),\\
    (3,2,1),\\
    (3,3),\\
    (4,1,1),\\
    (4,2),\\
    (5,1)\text{, and}\\
    (6).
\end{center}
Their function values under $g$ are
\begin{align*}
    g((1^6))&=0,\\
    g((2,1^4))&=5,\\
    g((2^2,1^2))&=8,\\
    g((2^3))&=9,\\
    g((3,1^3))&=9,\\
    g((3,2,1))&=11,\\
    g((3^2))&=12,\\
    g((4,1^2))&=12,\\
    g((4,2))&=13,\\
    g((5,1))&=14,\text{ and}\\
    g((6))&=15.
\end{align*}
\end{example}
Let \textbf{E}$(g,P_n)$ represent the expected value of $g$ on $P_n$. By the above example, we have that
\[\textbf{E}(g)=\frac{0}{11}+\frac{5}{11}+\frac{8}{11}+2\frac{9}{11}+\frac{11}{11}+2\frac{12}{11}+\frac{13}{11}+\frac{14}{11}+\frac{15}{11}= \frac{108}{11}\approx 9.8. \]
If $\lambda\vdash n$, then $g(\lambda)<n$ only when $\lambda=(1^n)$. Hence the expected value of the Gini index on $P_n$ tends to infinity as $n$ approaches infinity. A natural followup question is, ``What is the expected value of the Gini index on $P_n$ when normalized by its maximum value, and what is its end behavior?'' In our example of $n=6$, the ``normalized'' expected value of $g$ is obtained by dividing by the maximum value of $15$:
\[\frac{0}{165}+\frac{5}{165}+\frac{8}{165}+2\frac{9}{165}+\frac{11}{165}+2\frac{12}{165}+\frac{13}{165}+\frac{14}{165}+\frac{15}{165}= \frac{36}{55}\approx 0.655. \] 

To determine the limit of these numbers (if it exists), we recall the generating function of $g$ from Proposition \ref{prop4}: 
\[ \sum_{n=1}^{\infty}\sum_{\lambda\vdash n}q^{\left(\binom{n+1}{2}-g(\lambda)\right)}x^n=\prod_{n=1}^{\infty}\frac{1}{1-q^{\binom{n+1}{2}}x^n}-1. 
\]
Taking the formal partial derivative  with respect to $q$, we have
\[
    \sum_{n=1}^{\infty}\sum_{\lambda\vdash n}\left( \binom{n+1}{2}-g(\lambda) \right)q^{\binom{n+1}{2}-g(\lambda)-1}x^n=\sum_{n=1}^{\infty}\left(\frac{\binom{n+1}{2}q^{\binom{n+1}{2}-1}x^n}{\left( 1-q^{\binom{n+1}{2}}x^n\right)^2}\cdot \sum_{\substack{i=1\\i\neq n}}^{\infty} \frac{1}{1-q^{\binom{i+1}{2}}x^i} \right). 
    \]
Setting $q=1$ yields the following:
\begin{align*}  
\sum_{n=1}^{\infty}\sum_{\lambda\vdash n}\left( \binom{n+1}{2}-g(\lambda) \right)x^n&=\sum_{n=1}^{\infty}\left(\frac{\binom{n+1}{2}x^n}{\left( 1-x^n\right)^2}\cdot \prod_{\substack{i=1\\i\neq n}}^{\infty} \frac{1}{1-x^i} \right)\\
&=\left(\sum_{n=1}^{\infty}\frac{\binom{n+1}{2}x^n}{\left( 1-x^n\right)}\right)\cdot\left( \prod_{i=1}^{\infty} \frac{1}{1-x^i} \right).
\end{align*}
We have thus obtained a generating function for the sums of the outcomes of $g$ on $P_n$, for any $n\in\N$. To determine the expected value of the normalized Gini index on $P_n$, one need only divide the coefficient of $x^n$ in the series by 
\[ \binom{n}{2}\cdot P(n); \]
the maximum value of $g$ on $P_n$ times the number of partitions of $n$. To find the general form for these coefficients, however, we will need to write the expression on the left as a formal power series.

We can recognize the right hand term of the above product as the generating function of the partition function, $P(i)$. Moreover, the left hand term of the product is a Lambert series, which formally sums to
\[  
\sum_{n=1}^{\infty}\frac{\binom{n+1}{2}x^n}{\left( 1-x^n\right)}=\sum_{n=1}^{\infty}\sum_{d| n}\binom{d+1}{2}x^n.
\]
Manipulating the sum on the right, we obtain
\begin{align*}
    \sum_{n=1}^{\infty}\sum_{d| n}\binom{d+1}{2}x^n&=\sum_{n=1}^{\infty}\sum_{d|n}\frac{1}{2}(d^2+d)x^n\\
    &=\frac{1}{2}\sum_{n=1}^{\infty}(\sigma_1(n)+\sigma_2(n))x^n,
\end{align*}
where
\[\sigma_x(n)=\sum_{d|n}d^x\]
is the so-called \emph{sum of divisors} function, for $x\in\mathbb{C}$.

Putting this all together yields
\begin{align*}
    \sum_{n=1}^{\infty}\sum_{\lambda\vdash n}\left( \binom{n+1}{2}-g(\lambda) \right)x^n&=\left( \frac{1}{2}\sum_{n=1}^{\infty}(\sigma_1(n)+\sigma_2(n))x^n\right)\cdot\left( \sum_{i=0}^{\infty}P(i)x^i \right)\\
    &=\frac{1}{2}\sum_{n=1}^{\infty}\sum_{i=0}^{n-1}P(i)(\sigma_1(n-i)+\sigma_2(n-i))x^n.
\end{align*}
By distributing the $x^n$ on the left we can isolate the sum containing $g(\lambda)$, obtaining
\begin{align*}
    \sum_{n=1}^{\infty}\sum_{\lambda\vdash n}g(\lambda)x^n&=\sum_{n=1}^{\infty}P(n)\binom{n+1}{2}x^n-\frac{1}{2}\sum_{n=1}^{\infty}\sum_{i=0}^{n-1}P(i)(\sigma_1(n-i)+\sigma_2(n-i))x^n\\
    &=\sum_{n=1}^{\infty}\frac{1}{2}\left(P(n)(n^2+n)- \sum_{i=0}^{n-1}P(i)(\sigma_1(n-i)+\sigma_2(n-i)) \right)x^n.
\end{align*}
We summarize these results in the following theorem.
{\begin{theorem}
Let $n>1$. The expected value of the normalized Gini index on the set $P_n$ of partitions of $n$ is
\[ \textbf{E}(g,P_n)=\sum_{\lambda\vdash n}\frac{g(\lambda)}{\binom{n}{2}P(n)}=\frac{n+1}{n-1}-\sum_{i=0}^{n-1}\frac{P(i)(\sigma_1(n-i)+\sigma_2(n-i))}{P(n)(n^2-n)}, \]
where $P(n)$ is the number of partitions of $n$.
\label{expected value prop}
\end{theorem}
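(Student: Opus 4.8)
The plan is to obtain $\sum_{\lambda\vdash n}g(\lambda)$ as the coefficient of $x^n$ in a one-variable generating function, and then divide by the normalizing constant $\binom{n}{2}P(n)$. Everything needed to build that generating function has been assembled above, so the strategy reduces to reading off the $x^n$-coefficient and carrying out the normalization. Concretely, I would start from the two-variable generating function of Proposition \ref{prop4}, apply the formal partial derivative $\partial/\partial q$, and then specialize $q=1$. On the left, differentiating $q^{\binom{n+1}{2}-g(\lambda)}$ produces the factor $\binom{n+1}{2}-g(\lambda)$, so at $q=1$ the left side becomes $\sum_{n}\sum_{\lambda\vdash n}\bigl(\binom{n+1}{2}-g(\lambda)\bigr)x^n$; this isolates the unknown $\sum_{\lambda\vdash n}g(\lambda)$ linearly, which is the quantity the expected value depends on.

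Next I would simplify the right-hand side. Differentiating the infinite product and setting $q=1$ factors it as the partition generating function $\prod_{i}(1-x^i)^{-1}$ times the Lambert series $\sum_{n}\binom{n+1}{2}x^n/(1-x^n)$. Expanding the Lambert series as $\sum_{n}\sum_{d\mid n}\binom{d+1}{2}x^n$ and writing $\binom{d+1}{2}=\tfrac12(d^2+d)$ identifies its $x^n$-coefficient as $\tfrac12(\sigma_1(n)+\sigma_2(n))$. Taking the Cauchy product with $\sum_{i\ge 0}P(i)x^i$ then gives the coefficient of $x^n$ on the right as $\tfrac12\sum_{i=0}^{n-1}P(i)(\sigma_1(n-i)+\sigma_2(n-i))$. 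Equating the $x^n$-coefficients and solving yields
\[ \sum_{\lambda\vdash n}g(\lambda)=\tfrac12\Bigl(P(n)(n^2+n)-\sum_{i=0}^{n-1}P(i)(\sigma_1(n-i)+\sigma_2(n-i))\Bigr). \]
To finish, I would divide by the maximum value $\binom{n}{2}=\tfrac12(n^2-n)$ and by $P(n)$: the factor $\tfrac12$ cancels, the leading term collapses as $\frac{P(n)(n^2+n)}{(n^2-n)P(n)}=\frac{n+1}{n-1}$, and the remaining sum becomes the stated subtraction, giving exactly the claimed formula.

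I expect the one genuine subtlety to be the justification of the formal operations rather than the algebra: differentiating an infinite product term-by-term and then evaluating at $q=1$ must be carried out in the ring of formal power series in $x$, since $q=1$ lies on the boundary where convergence fails. This is legitimate once one observes that the coefficient of each $x^n$ in the product of Proposition \ref{prop4} is a \emph{finite} polynomial in $q$ --- it is a sum over the finitely many partitions of $n$ of a single power of $q$ --- so that $\partial/\partial q$ is a bona fide derivation and the specialization $q\mapsto 1$ is well defined. Once this is granted, the Lambert-series rearrangement and the Cauchy product are standard, and the derivation makes the theorem a direct consequence of Propositions \ref{prop2} and \ref{prop4} together with Lemma \ref{lemma1}.
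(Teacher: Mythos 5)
Your proposal is correct and follows essentially the same route as the paper: differentiate the generating function of Proposition \ref{prop4} with respect to $q$, set $q=1$, expand the resulting Lambert series via $\sigma_1$ and $\sigma_2$, take the Cauchy product with the partition generating function, and normalize by $\binom{n}{2}P(n)$. Your added remark justifying the formal differentiation and the specialization $q\mapsto 1$ coefficientwise is a sound observation that the paper leaves implicit.
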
}

Table \ref{Expected Value Table} contains the expected values of the Gini index on $P_n$ for select values of $n$.

\begin{center}
\begin{tabular}{
||P{3cm}|P{5cm}|P{5cm}||  }
 \hline
 $n$ & Expected Value (Exact) & Expected Value (Decimal)\\
 \hline
 $5$&$22/55$&$0.6286$\\
 $10$&$451/630$&$0.7159$\\
 $15$&$14023/18480$&$0.7588$\\
 $20$&$93647/119130$&$0.7861$\\
 $25$&$31541/39160$&$0.8054$\\
 $30$&$1999303/2437740$&$0.8201$\\
 $35$&$7366148/8855385$&$0.8318$\\
 $40$&$628333/746760$&$0.8414$\\
 $45$&$340724/401103$&$0.8495$\\
 $50$&$42848633/50035370$&$0.8564$\\
 $100$&$-$&$0.8951$\\
 $500$&$-$&$0.9510$\\
 $1000$&$-$&$0.9649$\\
 $5000$&$-$&$0.9841$\\
 $10000$&$-$&$0.9887$\\
 
 \hline
\end{tabular}
\captionof{table}{Expected value of $g_1$ on $P_n$}\label{Expected Value Table}
\end{center}
The expected value is monotone increasing at all intermediate values.\\

One of the questions at the start of this section regarded the asymptotic behavior of the expected value in Theorem \ref{expected value prop}. Based on the experimental data in the above table, we make the conservative conjecture that
\[\lim_{n\rightarrow \infty}\textbf{E}(g,P_n)=1. \]

\section{Further Generalizations}
    So far we have been working with the Gini index $g$ defined on the set $P_n$ of partitions of a positive integer $n$. This corresponds to the ``real life'' situation in which $n$ dollars are distributed amongst $n$ people. we can broaden our notion of the Gini index of a partition by considering distributions of the following nature.\\
    
    Suppose we have 6 dollars distributed amongst three people, where one person gets 4 dollars, one gets 2, and the last gets 0. This distribution corresponds to the partition $\lambda=(4,2,0)$ of $6$. The most equitable distribution would be that in which all three people have the same amount; that is, the partition $\mu=(2,2,2)$. It is clear that such distributions are in bijection with those partitions of $6$ with at most $3$ parts. \\
    
    To find the Lorenz Curve of $\lambda$, we pad its tail with zeros until it has $6$ parts
    \[ \lambda=(4,2,0,0,0,0). \]
    Doing the same with $\mu$, we obtain the graph in Figure \ref{Gini Figure} .
    
    \begin{figure}[htp]
    \centering
    \includegraphics[width=10cm]{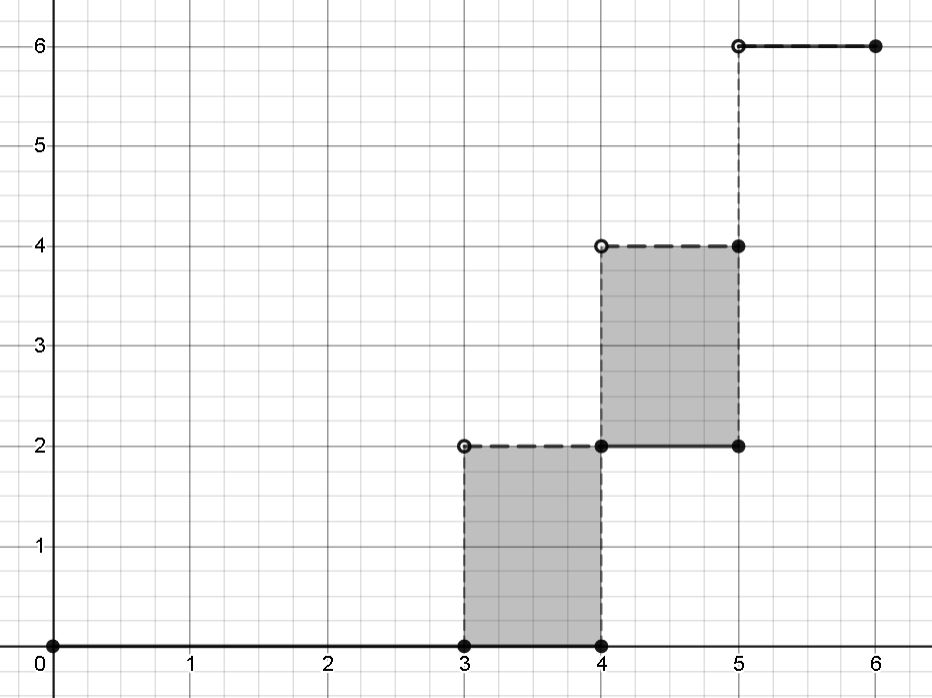}
    \caption{The line of equality (dashed), the Lorenz curve of the partition (4,2,0) of 6 (solid), and the area between them (shaded).}
    \label{Gini Figure}
\end{figure}
    
    It would be sensible to define the Gini index of such a distribution to be the area between the new line of equality and the Lorenz Curve of $\lambda$, which we see from the graph is $4$. Noting that the line of equality and the Lorenz curve of $\lambda$ coincide on the interval $[0,3]$, we may generalize our notion of a Lorenz curve as follows.\\
    
    Let $n,k\in\N$. Suppose we have $n$ individuals in a population, amongst whom is distributed $nk$ dollars. Let $\lambda=(\lambda_1,\ldots,\lambda_n)$ be the corresponding partition of $nk$ with $n$ parts (padded with zeros on the tail if necessary). The most equitable distribution then corresponds to the partition $(k^n)=(k,k,\ldots,k)$ (with $n$ parts), which we call the line of equality. 
   
    The \emph{Lorenz curve}, $L_{nk,n,\lambda}:[0,n]\longrightarrow[0,nk]$, is then defined by setting $L_{nk,n,\lambda}(0)=0$, and $L_{nk,n,\lambda}(x)=\sum_{i=n-j+1}^n\lambda_i$, where $1\leq j \leq n$ is the unique positive integer such that $x\in (j-1,j]$. In other words, if $x\in(j-1,j]$, then \[L_{nk,n,\lambda}(x)=\lambda_n+\lambda_{n-1}+\cdots+\lambda_{n-j+1}. \]
    Using this definition, we see that the Lorenz curve of $(k^n)$, the ``line of equality'', is given by
    \[ y=k\lceil x\rceil. \]
    This definition clearly restricts to the previous definition of a Lorenz curve when $k=1$. In this event, we will simplify our notation to $L_{n,n,\lambda}=L_{\lambda}$.
    
    To generalize the notion of the Gini index, we note that the formulas from section 2.3 still hold in this setting. That is, the area under the Lorenz curve $L_{nk,n,\lambda}$ is given by
    \[ \sum_{i=1}^n i\lambda_i. \]
    The area between the line of equality and the Lorenz curve of $\lambda$ is then
    \[\sum_{i=1}^n in-\sum_{i=1}^n i\lambda_i=\sum_{i=1}^n (i-1)k-\sum_{i=1}^n(i-1)\lambda_i.  \]
    These sums occur frequently throughout this paper, so we will adopt the function notation
    \[ b(\lambda)=\sum_{i=1}^n (i-1)\lambda_i.  \]
    As expected, we then define the Gini index of $\lambda$ to be
    \[g_{nk,n}(\lambda)=b(k^n)-b(\lambda).  \]
    If $k=1$, then we obtain the previous definition of the Gini index, and simplify our notation to $g_{n,n}(\lambda)=g(\lambda)$.

    This generalization, $g_{nk,n}$, of the Gini index is featured prominently in Chapter 6.

\newpage
\chapter{Kostka-Foulkes Polynomials}
The Gini index, $g$ of Chapter 3 is closely related to the representation theory of the symmetric group, $S_n$. These connections are discussed at length in Chapter 5. The construction that bridges the gap between the combinatorics we have seen thus far, and the representation theory we will encounter later, is that of the so-called ``Kostka-Foulkes'' polynomials.
\section{Kostka Numbers}
Let $\lambda$ be a partition of $n$. Recall that a semistandard tableau of shape $\lambda$ is a filling of the Young diagram of $\lambda$ by positive integers in $[n]$ that is
\begin{enumerate}
    \item weakly increasing across each row, and
    \item strictly increasing down each column.
\end{enumerate}
To each tableaux, $T$, of shape $\lambda\vdash n$, there is an associated $n$-tuple of non-negative integers $\mu=(\mu_1,\ldots,\mu_n)$ called the \emph{weight} of $T$, where each $\mu_i$ records the number of times that $i$ appears as an entry in $T$. We will be especially interested in tableaux whose weights are partitions.

\begin{example}
The tableau
\[\young(122,23,3,4)\]
of shape $\lambda=(3,2,1,1)$ has content $(1,3,2,1)$, and is a semistandard tableau. The  tableau
\[\young(125,34,6,7)\]
has shape $\lambda$, and weight $(1^7)$, and is a standard tableau.
\end{example}
As illustrated in the previous example, a semistandard tableau is standard if and only if its weight is $(1^n)$.

A natural question to ask at this point would be, ``what is the number of (semi) standard tableaux of shape $\lambda$ (and weight $\mu$)? The answer when $\lambda$ has weight $(1^n)$ is given by the so-called hook length formula. Each box in a Young diagram $\lambda$ determines a \emph{hook} - which consists of the boxes weakly below, and weakly to the right of that box. The \emph{hook length} of a box is the number of boxes in its hook.
\begin{example}
In the diagram
\[\young(531,31,1),  \]
each box is labeled with its hook length.
\end{example}

\begin{theorem}(Frame, Robinson, and Thrall)
Let $\lambda$ be a partition of $n$. The number, $f^{\lambda}$ of standard tableaux with shape $\lambda$ is 
\[f^{\lambda}=\frac{n!}{\prod_{i,j}h_{\lambda}(i,j) }.  \]
Where $h_{\lambda}(i,j)$ is the hook length of the box in the $i^{\text{th}}$ row and $j^{\text{th}}$ column of $\lambda$.
\label{Hook Length Formula}
\end{theorem}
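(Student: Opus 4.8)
The plan is to argue by induction on $n=|\lambda|$, comparing the proposed closed form against the natural recursion satisfied by the counting function $f^\lambda$. Write $H(\lambda)=\prod_{i,j}h_\lambda(i,j)$ for the hook product and $F(\lambda)=n!/H(\lambda)$ for the claimed value, so that the goal is to show $f^\lambda=F(\lambda)$. The base case $n=1$ is immediate: the single cell has hook length $1$, and there is exactly one standard tableau.

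First I would establish the branching recursion for standard tableaux. In any standard tableau of shape $\lambda\vdash n$, the largest entry $n$ must occupy an outer corner of $\lambda$ --- a cell $(r,s)$ with no cell to its right or below it --- since $n$ is the strict maximum while entries increase along rows and down columns. Deleting that cell leaves a standard tableau of the shape $\lambda\setminus\{(r,s)\}\vdash n-1$, and for each fixed corner this deletion is a bijection. Summing over the outer corners $c$ of $\lambda$ therefore gives
\[ f^\lambda=\sum_{c}f^{\lambda\setminus c}. \]
By the inductive hypothesis $f^{\lambda\setminus c}=F(\lambda\setminus c)$ for every corner, so it remains to verify the purely combinatorial identity $F(\lambda)=\sum_c F(\lambda\setminus c)$. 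Dividing through by $F(\lambda)=n!/H(\lambda)$, this is equivalent to
\[ \sum_c p_c=1,\qquad p_c:=\frac{1}{n}\cdot\frac{H(\lambda)}{H(\lambda\setminus c)}. \]
The ratio here is easy to read off the diagram: removing the corner $c=(r,s)$ decreases by exactly $1$ each hook length of the cells strictly to the left of $c$ in row $r$ and strictly above $c$ in column $s$, leaves every other hook length unchanged, and drops the corner's own hook length $1$ from the product. Hence $p_c$ factors as $\tfrac1n$ times a product of terms $h/(h-1)$ ranging over those arm and leg cells.

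The main obstacle is proving $\sum_c p_c=1$; this is not visible from the product formula itself. The cleanest route is the probabilistic \emph{hook walk} of Greene, Nijenhuis, and Wilf: start a walk at a uniformly chosen cell of $\lambda$ and, at each step, jump to a uniformly chosen cell lying strictly to the right in the same row or strictly below in the same column, halting upon reaching a corner. The walk terminates at \emph{some} corner with probability $1$, so the termination probabilities sum to $1$; the heart of the matter is to show that the probability of terminating at $c$ equals precisely $p_c$. I would do this by summing the transition probabilities over all trajectories ending at $c$ and recognizing the result, via the expansion $\prod_i(1+x_i)=\sum_{S}\prod_{i\in S}x_i$ applied with $x=1/(h-1)$ over the arm and leg cells, as the factored form of $p_c$ found above. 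This collapses the sum of termination probabilities to $\sum_c p_c=1$ and completes the induction.

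Alternatively, one can sidestep the probabilistic argument by passing through the Frobenius determinantal expression: setting $\ell_i=\lambda_i+k-i$ for a partition with $k$ parts, one establishes
\[ f^\lambda=n!\,\frac{\prod_{i<j}(\ell_i-\ell_j)}{\prod_i \ell_i!}, \]
and then proves the hook-product identity $H(\lambda)=\prod_i \ell_i!\big/\prod_{i<j}(\ell_i-\ell_j)$ by analyzing the first-column hook lengths, which equal the $\ell_i$, against the arm contributions in each row. Either way, the identity $\sum_c p_c=1$ (equivalently, the hook-product identity) is the single genuinely nontrivial step, while the branching recursion and the base case are routine.
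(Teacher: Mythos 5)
The paper does not actually prove this theorem: it is stated as a classical result of Frame, Robinson, and Thrall and used as a citation, so there is no in-text argument to compare yours against. Judged on its own terms, your outline follows the standard Greene--Nijenhuis--Wilf route, and the routine parts are handled correctly: the observation that $n$ must sit in an outer corner gives the branching recursion $f^\lambda=\sum_c f^{\lambda\setminus c}$, the base case is trivial, and the reduction of the inductive step to the identity $\sum_c p_c=1$ with $p_c=\tfrac1n\,H(\lambda)/H(\lambda\setminus c)$ is exactly right, as is the factorization of $p_c$ into $\tfrac1n\prod\bigl(1+\tfrac1{h-1}\bigr)$ over the arm and leg cells of the corner.

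The genuine gap is the one sentence where you say you would show that the hook walk terminates at $c$ with probability exactly $p_c$ by ``summing the transition probabilities over all trajectories ending at $c$ and recognizing the result'' via the expansion $\prod_i(1+x_i)=\sum_S\prod_{i\in S}x_i$. That recognition is the entire theorem in disguise and does not follow from the expansion alone. What is needed is the Greene--Nijenhuis--Wilf lemma: for a corner $c=(r,s)$ and subsets $A\subseteq\{1,\dots,r-1\}$, $B\subseteq\{1,\dots,s-1\}$, the probability that the walk ends at $c$ with row-projection $A\cup\{r\}$ and column-projection $B\cup\{s\}$ equals $\tfrac1n\prod_{i\in A}\tfrac1{h_{i,s}-1}\prod_{j\in B}\tfrac1{h_{r,j}-1}$. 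This is proved by its own induction on $|A|+|B|$, conditioning on the first step of the walk and using a small algebraic identity to telescope the sum over possible first steps; without it, the trajectory sum has no reason to collapse to the factored form of $p_c$. The same criticism applies to your fallback route: the Frobenius determinantal formula for $f^\lambda$ and the identity $H(\lambda)=\prod_i\ell_i!\big/\prod_{i<j}(\ell_i-\ell_j)$ are each comparable in difficulty to the hook length formula itself, so invoking them unproved just relocates the work. Your plan is the right plan, but as written it defers rather than supplies the one nontrivial step.
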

Let $\lambda$ and $\mu$ be partitions of $n$. We define the non-negative integer $K_{\lambda, \mu}$ to be the number of semistandard tableaux of shape $\lambda$ and weight $\mu$. The number $K_{\lambda, \mu}$ is called a \emph{Kostka number}.
The Kostka numbers, $K_{\lambda, \mu}$, appear frequently throughout representation theory -- perhaps most famously as the multiplicity of the weight $\mu$ in an irreducible representation of $\mathfrak{gl}_n(\mathbb{C})$ with highest weight $\lambda$ (also the multiplicity of the weight $\mu$ in the polynomial irrep of $GL_n(\mathbb{C})$ with highest weight $\lambda$).

The Kostka numbers have a generalization called the Kostka-Foulkes polynomials (sometimes called the Kostka polynomials or $q$-Kostka polynomials). These polynomials encode information about the irreducible representations of the symmetric and general linear groups, and relate the Gini index, as we know it, to representation theory. They are defined in terms of the Schur polynomials, and Hall-Littlewood polynomials.

\section{Schur Polynomials}
Let $\lambda$ be a partition of $n$ with at most $\ell$ parts. The \emph{Schur polynomial}, $s_{\lambda}(x_1,\ldots,x_{\ell})$, associated to $\lambda$ is defined as 
\[s_{\lambda}=\sum_{T}x^{T},  \]
where the sum is over all semistandard tableaux $T$ of shape $\lambda$ using numbers from $[\ell]$, and the monomial $x^{T}$ is defined
\[x^T=\prod_{i=1}^{\ell}(x_i)^{\text{number of times $i$ occurs in $T$}}.  \]

\begin{example}
If $\lambda=(2,1)$ then the possible semistandard tableaux of shape $\lambda$ with numbers from $[2]$ are
\[\young(11,2),\text{ and }\young(12,2) \]
and hence the Schur polynomial, $s_{\lambda}$, in 2 variables is
\[s_{\lambda}=x_1^2x_2+x_1x_2^2.\]
\end{example}

It turns out that Schur polynomials are symmetric polynomials of degree $n$. In fact, these polynomials form an orthonormal $\mathbb{Z}$-basis for the ring of symmetric functions, $\Lambda$. Schur polynomials are ubiquitous in representation theory, appearing in the representation theory of general linear groups and symmetric groups. It is possible to define the Schur polynomials in terms of these structures as follows.
\begin{enumerate}
    \item The Schur polynomials $s_{\lambda}$, for $\lambda\vdash n$, are the images of the irreducible representations of $S_n$ under the Frobenius map.
    \item The Schur polynomials $s_{\lambda}$, for $\lambda\vdash n$, are the characters of the finite dimensional irreducible (polynomial) representations of $GL_n(\mathbb{C})$.
\end{enumerate}
For more on Schur functions, see \cite{Fulton} chapters 7 and 8,  or \cite{Macdonald} section 1.7.

\section{Hall-Littlewood Polynomials}

The Hall-Littlewood polynomials were originally defined to address a problem in group theory. If $G$ is a finite abelian $p$-group, then by the fundamental theory of finitely generated abelian groups, $G$ can be factored as
\[ G=\bigoplus_{i=1}^{\ell}\Z_{p^{\lambda_i}} \]
where, without loss of generality, $\lambda_1\geq \lambda_2\geq \cdots \geq \lambda_{\ell}>0$. The partition $\lambda=(\lambda_1,\ldots,\lambda_{\ell})$ is called the \emph{type} of $G$. 

A family of symmetric polynomials called Hall polynomials (not to be confused with Hall-Littlewood polynomials) arise from the following scenario. Given a finite abelian $p$-group $G$ of type $\lambda$, let $G_{\mu^{(1)}\ldots\mu^{(k)}}^{\lambda}(p)$ denote the number of chains of subgroups
\[<1>=H_0\triangleleft H_1\triangleleft \cdots \triangleleft H_k = G  \]
in $G$ of type $\lambda$ such that $H_i/H_{i+1}$ is of type $\mu^{(i)}$, where the $\mu^{(i)}$ are integer partitions for $1\leq i \leq k$ (\cite{Macdonald}, \cite{Hall-Littlewood}).
Hall formally introduced these numbers in \cite{Hall} and proved several important results - among them was that $G_{\mu^{(1)}\cdots\mu^{(\ell)}}^{\lambda}(p)$ is a symmetric polynomial in $p$. Moreover, the these polynomials can be used as the multiplication constants of a commutative and associate algebra $H$ called the \emph{Hall algebra}. Littlewood found in \cite{Littlewood} that the generators, $u_{\lambda}(p)$, of $H$ (indexed by all partitions $\lambda$) are of the form
\[u_{\lambda}(p)=p^{b(\lambda)}P_{\lambda}(p^{-1}),  \]
where 
\[b(\lambda)=\sum_{i=1}^{\ell}(i-1)\lambda_i,  \]
$\ell$ is the number of parts of $\lambda$, and the $P_{\lambda}(q)$ are called the \emph{Hall-Littlewood polynomials}. These polynomials are defined by
\[ P_{\lambda}(x_1,\ldots,x_n;t)=\left(\prod_{i\geq0}\prod_{j=1}^{m(i)}\frac{1-t^i}{1-t^j} \right)\sum_{\sigma\in S_n}\sigma\left( x_1^{\lambda_1}\cdots x_n^{\lambda_n}\prod_{i<j}\frac{x_i-tx_j}{x_i-x_j} \right), \]
where $m(i)$ is the number of times that $i$ occurs in $\lambda$. It turns out that the Hall-Littlewood polynomials, like the Schur polynomials, are homogeneous symmetric functions of degree $|\lambda|$, and form a $\Z$-basis for $\Lambda$. They too, like the Schur polynomials, are ubiquitous in representation theory, appearing in the character theory of finite linear groups, and projective and modular representations of symmetric groups (\cite{Hall-Littlewood} ). 

\section{Kostka Foulkes Polynomials}
The \emph{monomial symmetric functions}, \[m_{\lambda}=\sum_{\sigma\in S_n}x_1^{\sigma(\lambda_1)}\cdots x_n^{\sigma(\lambda_n)},\]
are yet another $\Z$-basis for the ring of symmetric function, $\Lambda$. Moreover, since
\[P_{\lambda}(x_1,\ldots,x_n;0)=s_{\lambda}(x_1,\ldots,x_n)  ,\]
and
\[ P_{\lambda}(x_1,\ldots,x_n;1)=m_{\lambda}(x_1,\ldots,x_n) ,\]
we see that the Hall-Littlewood polynomials interpolate between the Schur polynomials and the monomial symmetric functions.
The entries of the transition matrix from $m_{\mu}$ to $s_{\lambda}$ are the Kostka numbers, $K_{\lambda, \mu}$:
\begin{equation}
    s_{\lambda}=\sum_{\mu}K_{\lambda, \mu}m_{\mu}.
    \label{Kostka Transition}
\end{equation}
The Kostka numbers can be generalized by substituting the Hall-Littlewood polynomials into equation \ref{Kostka Transition} as follows.
\[    s_{\lambda}=\sum_{\mu}K_{\lambda, \mu}(t)P_{\mu}(x_1,\ldots,x_n;t)
\]
Here, the $K_{\lambda, \mu}(t)$ are called the \emph{Kostka-Foulkes polynomials}. 
It was conjectured by Foulkes that the polynomials $K_{\lambda, \mu}(t)$ have non-negative integer coefficients. This was eventually proven by Lascoux and Schutzenberger in \cite{Lascoux-Schutzenberger} using the notion of the charge of a partition -- their result is now given.
\begin{theorem}
\label{Lascoux & Schutzenberger}
(Lascoux and Sch\"utzenberger) Let $\lambda$ and $\mu$ be partitions of an integer $n$.
\begin{enumerate}
    \item $K_{\lambda, \mu}(t)=\sum_{T}t^{c(T)}$, where the sum is over all semistandard tableaux $T$ of shape $\lambda$ and weight $\mu$.
    \item If $\lambda \succeq \mu$, then $K_{\lambda, \mu}(t)$ is monic of degree $b(\mu)-b(\lambda)$. If $\lambda\nsucceq \mu$ then $K_{\lambda, \mu}(t)=0$.
\end{enumerate}
\end{theorem}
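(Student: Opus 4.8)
The plan is to treat the combinatorial identity in part~(1) as the substantive content and to obtain part~(2) as a corollary. First I would recall the \emph{charge} statistic on which the theorem rests: to a word $w$ on the alphabet $\{1,2,\ldots\}$ one assigns an integer $c(w)$, defined first for standard words (permutations) by an index rule that builds up an index for each successive letter according to whether it lies to the left or to the right of its predecessor, and then extended to arbitrary weight by decomposing $w$ into a canonical family of standard subwords and summing their charges. Reading a semistandard tableau $T$ by its rows produces a word, and $c(T)$ is declared to be the charge of that reading word. As a sanity check, setting $t=1$ collapses the sum in part~(1) to the ordinary Kostka number $K_{\lambda,\mu}$, consistent with $P_\mu(x;1)=m_\mu$.

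The heart of the argument is to prove $\sum_T t^{c(T)}=K_{\lambda,\mu}(t)$, the sum ranging over semistandard tableaux $T$ of shape $\lambda$ and weight $\mu$. Two ingredients are required. The first is \emph{plactic invariance}: charge is constant on Knuth equivalence classes, so that $c(T)$ depends only on the tableau and not on the chosen reading order, and so that charge descends to the free tableau (plactic) monoid. The second is the \emph{identification with the transition coefficient}: one organizes the tableaux of fixed weight $\mu$ into the cyclage poset, a graded poset whose rank is governed by the charge statistic and on which the cyclage move connects every tableau to a distinguished ground tableau. One then verifies that the generating polynomial $\sum_T t^{c(T)}$ satisfies the same characterization that defines $K_{\lambda,\mu}(t)$ through the expansion $s_\lambda=\sum_\mu K_{\lambda,\mu}(t)P_\mu(x_1,\ldots,x_n;t)$ --- equivalently, that it equals $\langle s_\lambda,Q_\mu\rangle$, the coefficient obtained by pairing $s_\lambda$ against the basis $\{Q_\mu\}$ dual to $\{P_\mu\}$ under the $t$-deformed Hall inner product. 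Matching these two descriptions yields part~(1).

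Granting part~(1), part~(2) is bookkeeping about the charge statistic. For the vanishing, I would invoke the classical fact that a semistandard tableau of shape $\lambda$ and weight $\mu$ exists if and only if $\lambda\succeq\mu$: in a column-strict filling an entry $\le k$ can occur only in the first $k$ rows, so $\sum_{i\le k}\mu_i\le\sum_{i\le k}\lambda_i$ for every $k$, which is exactly $\lambda\succeq\mu$. Hence when $\lambda\nsucceq\mu$ the index set of the charge sum is empty and $K_{\lambda,\mu}(t)=0$. For the degree and monicity, since $b(\nu)=\sum_i(i-1)\nu_i$ is precisely the statistic against which charge is measured, the assertion $\deg K_{\lambda,\mu}(t)=b(\mu)-b(\lambda)$ amounts to $\max_T c(T)=b(\mu)-b(\lambda)$, and monicity amounts to this maximum being attained by a \emph{unique} tableau. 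Passing to the cocharge $b(\mu)-c(T)$, this is the statement that cocharge is bounded below by $b(\lambda)$ with equality for exactly one tableau; I would prove it by exhibiting the extremal tableau explicitly and supplying a charge-raising move that strictly increases $c(T)$ whenever $T$ is not the maximizer.

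The decisive obstacle is part~(1), and within it the two facts that charge is a plactic invariant and that its generating function reproduces the $s$-to-$P$ transition coefficient; these are the genuinely deep Lascoux--Sch\"utzenberger inputs, since plactic invariance and the cyclage rank structure require the full apparatus of Knuth equivalence rather than a short computation. By contrast, once the charge formula is in hand the degree-and-monicity analysis of part~(2) reduces to a finite extremal problem about the charge statistic that can be settled directly, and the vanishing is immediate from the emptiness of the tableau set off the dominance order.
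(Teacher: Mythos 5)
The paper does not prove this theorem at all: it is quoted as a known result and attributed to Lascoux and Sch\"utzenberger with a citation to their 1978 note, after which the text only illustrates the charge statistic by example. So there is no internal proof to compare against; your proposal has to be judged as a free-standing argument. As an outline it is faithful to the standard proof architecture in the literature (Lascoux--Sch\"utzenberger, as presented in Macdonald and in the D\'esarm\'enien--Leclerc--Thibon survey the paper cites): define charge on words, extend by standard subwords, prove plactic invariance, organize tableaux of fixed weight by cyclage, and match the resulting generating polynomial against the transition coefficient in $s_\lambda=\sum_\mu K_{\lambda,\mu}(t)P_\mu$. Your reductions for part~(2) are also correct: the vanishing needs only the (easy, and correctly argued) necessity of $\lambda\succeq\mu$ for a column-strict filling to exist, and the degree claim $\deg K_{\lambda,\mu}(t)=b(\mu)-b(\lambda)$ with monicity is equivalent to the maximum of $c(T)$ being $b(\mu)-b(\lambda)$ and being attained by a unique tableau, which is consistent with the examples in the text (e.g.\ $K_{(n),(1^n)}(t)=t^{\binom{n}{2}}$).

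That said, what you have written is a roadmap, not a proof. The two claims you correctly identify as decisive --- that charge is constant on Knuth classes, and that $\sum_T t^{c(T)}$ coincides with the coefficient of $P_\mu$ in $s_\lambda$ (equivalently with $\langle s_\lambda,Q_\mu\rangle$) --- are precisely where all of the content of part~(1) lives, and you state them as targets without supplying the inductive cyclage argument that establishes them. Likewise in part~(2) the extremal tableau and the charge-raising move are invoked but never exhibited, so uniqueness of the maximizer (hence monicity) is asserted rather than proved. None of these gaps reflects a wrong turn --- each missing piece is a true statement with a known proof --- but as submitted the argument would not stand on its own, whereas the paper deliberately treats the whole theorem as an external input. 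If your intent is to match the paper's level of rigor, a citation suffices; if your intent is to actually prove the theorem, the plactic-invariance and cyclage lemmas must be carried out in full.
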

The function $c$ in the theorem is a combinatorial statistic known as the \emph{charge} of the tableaux $T$. The charge statistic has a somewhat complicated definition, so we opt to explain it through example.

\begin{example}
Let $T$ be the tableaux of shape $\lambda=(4,2,1)$ and weight ${\mu=(3,2,1,1)}$ given by
\[T= \young(1112,24,3) .\]
First we form the \emph{reading word} of $T$, which we denote by $w(T)$, by reading $T$ from right to left in consecutive rows, starting from the top.
\[ w(T)=2111423 \]
Next we find the \emph{standard subwords} of $T$ by finding the first $1$ in $w(T)$, and underlining it. Then we find the first $2$ in $w(T)$ occurring to the right of the $1$ that was underlined -- looping back to the beginning if necessary. Continuing in this fashion until one of each of the numbers from $1$ to $\ell(\mu)$ has been underlined (where $\ell(\mu)$ is the number of parts of $\mu$). 
\[ w(T)=2\underline{1}11\underline{4}\underline{2}\underline{3} \]
Removing the underlined numbers from $w(T)$ yields the first standard subword,
\[ w_1=1423. \]
To find the second standard subword, we perform the same procedure on the leftover numbers,
\[ 211, \]
which yields a second standard subword of
\[ w_2=21,\]
and a third standard subword of
\[ w_3=1.\]
The \emph{charge of a standard subword} is defined by the following algorithm. To find the charge of $w_1$, we mark the number $1$ with a subscript of 0. We proceed from $1$ to the right. If we encounter the number $2$ before reaching the end of $w_1$, we give it a subscript of $0$. If we have to loop around to the beginning, then we mark the $2$ with a subscript of $1$. In short, the subscript on any number counts the amount of times we must loop back to the start in order to reach that number when reading through the word from left to right, starting at the number $1$. Applying this to the standard subwords $w_1$, $w_2$ and $w_3$, we have
\[ w_1=1_0 4_1 2_0 3_0, \]
\[ w_2=2_1 1_0, \]
and
\[ w_3=1_0 .\]
The charge of these standard subwords, is the sum of their subscripts:
\[ c(w_1)=0+1+0+0=1 ,\]
\[ c(w_2)=1+0=1 ,\]
\[ c(w_3)=0 .\]
The charge of the Tableaux (or equivalently, the charge of its reading word) is then defined as the sum of the charges of its standard subwords. Hence c(T)=1+1+0=2.
\end{example}

\newpage

\section{The Degree of $K_{\lambda, \mu}(t)$}

Let $n,k\in\N$ and let $\lambda$ be a partition of $nk$ into at most $n$ parts. Recall that the Gini index $g_{nk,n}$ of $\lambda$ is given by
\[ g_{nk,n}(\lambda)=b((k^n))-b(\lambda), \]
where $b(\lambda)=\sum_{i=1}^{n}(i-1)\lambda_i$, and $(k^n)=(k,k,\ldots,k)$ is the so-called ``flat'' partition.

Since $\lambda\succeq (k^n)$, for all partitions $\lambda$ of $nk$ with at most $n$ parts, by the theorem of Lascoux and Sch\"utzenberger, we have
\begin{corollary}
The Kostka-Foulkes polynomial $K_{\lambda, (k^n)}(t)$ is monic of degree $g_{nk,n}(\lambda)$. Moreover \[g_{nk,n}(\lambda)=\max\left\{c(T):\text{$T$ is a semistandard tableaux of shape $\lambda$ and weight $(k^n)$}\right\}.  \]
\label{degree corollary}
\end{corollary}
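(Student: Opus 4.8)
The plan is to derive the corollary directly from the theorem of Lascoux and Sch\"utzenberger (Theorem~\ref{Lascoux & Schutzenberger}) by specializing the target weight to $\mu=(k^n)$. The theorem does essentially all of the heavy lifting, so the only thing genuinely requiring an argument of our own is that the flat partition $(k^n)$ sits at the bottom of the dominance order among all partitions of $nk$ with at most $n$ parts; once that is in place, both assertions fall out by bookkeeping.

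First I would establish the dominance inequality $\lambda\succeq(k^n)$ for every partition $\lambda=(\lambda_1,\ldots,\lambda_n)$ of $nk$ with at most $n$ parts. Unwinding the definition of dominance, this amounts to showing that the partial sums satisfy $\sum_{i=1}^{j}\lambda_i\ge jk$ for all $1\le j\le n$. Since the parts are weakly decreasing and sum to $nk$, the average of the first $j$ parts is at least the overall average $nk/n=k$; indeed each of $\lambda_1,\ldots,\lambda_j$ is at least as large as the average of the remaining parts, so $\tfrac{1}{j}\sum_{i=1}^{j}\lambda_i\ge k$. Multiplying through by $j$ gives the required inequality, and hence $(k^n)\preceq\lambda$.

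With $\lambda\succeq(k^n)$ in hand, part (2) of Theorem~\ref{Lascoux & Schutzenberger} applies with $\mu=(k^n)$ and tells us that $K_{\lambda, (k^n)}(t)$ is monic of degree $b((k^n))-b(\lambda)$. By the definition of the generalized Gini index we have $g_{nk,n}(\lambda)=b((k^n))-b(\lambda)$, so $K_{\lambda, (k^n)}(t)$ is monic of degree $g_{nk,n}(\lambda)$, which is the first assertion.

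For the ``moreover'' statement I would invoke part (1) of the same theorem, which expresses $K_{\lambda, (k^n)}(t)=\sum_{T}t^{c(T)}$ as a sum over semistandard tableaux $T$ of shape $\lambda$ and weight $(k^n)$. Because every exponent $c(T)$ is a non-negative integer and the coefficients are non-negative, the degree of this polynomial is exactly $\max_{T}c(T)$, and this maximum is attained since the polynomial is monic and hence nonzero. Equating this degree with the value computed in the previous paragraph yields $g_{nk,n}(\lambda)=\max\{c(T):T\text{ is a semistandard tableau of shape }\lambda\text{ and weight }(k^n)\}$. The only real obstacle is the dominance fact of the second paragraph, and even that reduces to a one-line averaging argument; the rest is a direct reading of the cited theorem.
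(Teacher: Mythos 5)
Your proposal is correct and follows the same route as the paper, which presents the corollary as an immediate consequence of Theorem~\ref{Lascoux & Schutzenberger} together with the (unproved there) observation that $\lambda\succeq(k^n)$ for every partition $\lambda$ of $nk$ with at most $n$ parts. Your averaging argument correctly supplies that dominance fact, and the remaining bookkeeping with $b((k^n))-b(\lambda)=g_{nk,n}(\lambda)$ and the non-negativity of the charge-generating sum matches the paper's intent exactly.
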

The Kostka-Foulkes polynomials encode information about the irreducible representations of the symmetric and general linear groups, and therefore Corollary \ref{degree corollary} frames the Gini index within the context of representation theory. We will elaborate on, and explore these connections in the subsequent chapters.

\newpage

\chapter{The Gini Index and Complex Reflection Groups}

Let $n$ be a positive integer and $\lambda$ a partition of $n$. For each Specht module $S^{\lambda}$ (an irreducible representation of $S_n$) there is a special polynomial called the ``graded multiplicity'' of $S^{\lambda}$ in the coinvariants, which encodes how the coinvariant ring of $S_n$ decomposes with respect to $S^{\lambda}$. It turns out that the Kostka-Foulkes polynomial $K_{\widetilde{\lambda},(1^n)}$ is the graded multiplicity of $S^{\lambda}$ in the coinvariants. We will see that the degrees of the graded multiplicity polynomials of $S_n$ are exactly the values of the Gini index $g$ ($=g_{n,n}$) on $P_n$.\\

The symmetric group is a member of a broader family of finite groups known as ``complex reflection groups''. Using graded multiplicity polynomials, we will extend the notion of the Gini index to all other complex reflection groups, and provide formulas for the Gini index for dihedral groups.

\section{The Symmetric Group}
Let $n$ be a positive integer and $V\cong\mathbb{C}^n$ be the defining representation of $S_n$, and fix a basis $x_1,x_2,\ldots,x_n$ for $V$. In other words, if $x$ denotes the column vector $(x_1,x_2,\ldots,x_n)^t$, then $S_n$ acts on $V$ by
\[\sigma x=(x_{\sigma(1)},x_{\sigma(2)},\ldots,x_{\sigma(n)})^t, \]
where $\sigma\in S_n$. By $\mathbb{C}[V]$ we mean the ring of polynomials $\mathbb{C}[V]\cong\mathbb{C}[x_1,x_2,\ldots,x_n]$, obtained by treating each basis vector as an indeterminant. If $f\in\mathbb{C}[V]$, then we can define an action of $S_n$ on $\mathbb{C}[V]$ by
\[(\sigma f)(x)=f(\sigma x).  \]
This action turns $\mathbb{C}[V]$ into an infinite dimensional representation of $S_n$. Since every polynomial in $\mathbb{C}[V]$ is a finite sum of homogeneous monomials, as a vector space, $\mathbb{C}[V]$ admits the gradation
\[ \mathbb{C}[V]=\bigoplus_{d\geq 0}\mathbb{C}[V]_d, \]
where $\mathbb{C}[V]_d=\{f\in\mathbb{C}[V]:\text{$f$ is a homogeneous polynomial of degree $d$}\}$.
Thus we say that $\mathbb{C}[V]$ is a \emph{graded representation} of $S_n$. Note that each graded component is a finite dimensional $S_n$-representation, with basis given by the collection of all monomials in  $x_1,\ldots,x_n$ of total degree $d$.\\

A polynomial $f\in\mathbb{C}[V]$ is called \emph{symmetric} if 
\[\sigma f=f  \]
for all $\sigma\in S_n$. The collection of all symmetric polynomials in $\mathbb{C}[V]$ forms a subring of $\mathbb{C}[V]$ called the \emph{ring of symmetric polynomials} in $n$-variables, which we denote by $\Lambda_n$. That is,
\[ \Lambda_n=\{f\in\mathbb{C}[V]:\sigma f=f\text{ for all }\sigma\in S_n\}. \]

The symmetric polynomials are generated (as a $\mathbb{C}$-algebra) by the \emph{power sum symetric polynomials} $p_1,\ldots,p_n$, where
\[p_i=x_1^i+x_2^i+\cdots+x_n^i.  \]
In other words, 
\[\Lambda_n\cong\mathbb{C}[p_1,\ldots,p_n].\]
 
The ring of \emph{coinvariants} of $S_n$ is the quotient ring
\[ \mathbb{C}[V]_{S_n}=\mathbb{C}[V]/(p_1,\ldots,p_n) \]
 by the ideal generated by the symmetric polynomials with no constant term. The ring of symmetric polynomials and the coinvariant ring are also graded representations of $S_n$, and are similarly graded by homogeneous degree:
 \begin{align*}
     \Lambda_n&=\bigoplus_{d\geq 0}\Lambda_n^d,\text{ and}\\
     \mathbb{C}[V]_{S_n}&=\bigoplus_{d\geq 0}\mathbb{C}[V]_{S_n}^d,
 \end{align*}
 where $\Lambda_n^d=\Lambda_n\cap \mathbb{C}[V]_d$, and $\mathbb{C}[V]_{S_n}^d=\mathbb{C}[V]_{S_n}\cap\mathbb{C}[V]^d$. The graded components of these representations are finite dimensional representations of $S_n$, and therefore decompose into finite direct sums of irreducible representations. What makes the coinvariant ring of particular interest, is that it is a graded representation that is isomorphic to the regular representation of $S_n$ \cite{Stanley}. This fact will be important when defining the Gini index of an irreducible representation of $S_n$.
 
 An interesting question one might ask at this point is, ``how do the graded components of the coinvariant ring decompose into sums of irreducible representations?'' To address this question, we recall that the irreducible representations of the symmetric group $S_n$ are indexed by the partitions of $n$, and unlike most other finite groups, there is a canonical way to index them using Specht Modules (as seen in Chapter 2).
Under this identification, the partition $(n)$ indexes the trivial representation and $(1^n)$ indexes the sign representation. 

Let $\lambda$ be a partition of $n$, and let $S^{\lambda}$ be the corresponding irreducible representation (Specht module) indexed by $\lambda$. Denote by $[S^{\lambda}:\mathbb{C}[V]_{S_n}^d]$ the multiplicity of $S^{\lambda}$ in the homogeneous degree $d$ coinvariants of $S_n$. The \emph{graded multiplicity} polynomial of $S^{\lambda}$ in the coinvariant ring is defined by
\[p_{\lambda}(t)=\sum_{d\geq 0} [S^{\lambda}:\mathbb{C}[V]_{S_n}^d]t^d. \]
 
It has been known for some time (a result likely due to Frobenius) that the graded multiplicities of the symmetric group are exactly the Kostka-Foulkes polynomials (\cite{Hall-Littlewood}).

\begin{theorem}
Let $\lambda$ be a partition of $n$ indexing a irreducible representation $S^{\lambda}$ of $S_n$ (in the usual way). Then 
\[ p_{\lambda}(t)=K_{\widetilde{\lambda},(1^n)}(t) \]
\label{graded multiplicity kostka}
\end{theorem}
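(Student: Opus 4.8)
The plan is to pass to symmetric functions via the graded Frobenius characteristic and to reduce the statement to a classical Hall--Littlewood identity. Write $\mathrm{ch}$ for the Frobenius characteristic, sending the class function of an $S_n$-module to a degree-$n$ symmetric function, so that $\mathrm{ch}(S^{\lambda})=s_{\lambda}$, and for a graded $S_n$-module $M=\bigoplus_d M_d$ set $\mathcal{F}_t(M)=\sum_{d\geq0}\mathrm{ch}(M_d)\,t^d$, a symmetric function with coefficients in $\mathbb{Z}[[t]]$. Unwinding the definition of $p_{\lambda}(t)$ gives
\[ \mathcal{F}_t\big(\mathbb{C}[V]_{S_n}\big)=\sum_{\lambda\vdash n} p_{\lambda}(t)\,s_{\lambda}, \]
so, because the Schur functions are linearly independent, the theorem is equivalent to the assertion that the coefficient of $s_{\lambda}$ in $\mathcal{F}_t(\mathbb{C}[V]_{S_n})$ equals $K_{\widetilde{\lambda},(1^n)}(t)$.

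Next I would compute $\mathcal{F}_t(\mathbb{C}[V]_{S_n})$ by splitting off the invariants. It is classical (Chevalley) that $\mathbb{C}[V]$ is free over $\Lambda_n=\mathbb{C}[p_1,\ldots,p_n]$ and that $\mathbb{C}[V]\cong\mathbb{C}[V]_{S_n}\otimes\Lambda_n$ as graded $S_n$-modules, with $S_n$ acting trivially on the invariant factor, whose Hilbert series is $\prod_{i=1}^n(1-t^i)^{-1}$. Taking Frobenius characteristics converts this tensor product into a product, yielding
\[ \mathcal{F}_t\big(\mathbb{C}[V]_{S_n}\big)=\mathcal{F}_t\big(\mathbb{C}[V]\big)\cdot\prod_{i=1}^n\big(1-t^i\big). \]
The graded character of $\mathbb{C}[V]=\mathrm{Sym}(V)$ is elementary: a permutation of cycle type $\rho$ acts on $\mathrm{Sym}(V)$ with graded trace $\prod_j(1-t^{\rho_j})^{-1}$, so
\[ \mathcal{F}_t\big(\mathbb{C}[V]\big)=\sum_{\rho\vdash n}\frac{1}{z_{\rho}\prod_j(1-t^{\rho_j})}\,p_{\rho}, \]
where $z_{\rho}$ is the usual centralizer order. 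Both of these reductions are routine; all the content is now packaged into a purely symmetric-function identity.

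The hard part is to match this against the definition $s_{\lambda}=\sum_{\mu}K_{\lambda,\mu}(t)P_{\mu}$ of the Kostka--Foulkes polynomials. Introducing the modified polynomial $Q'_{\mu}=\sum_{\lambda}K_{\lambda,\mu}(t)s_{\lambda}$ dual to $P_{\mu}$, the target identity becomes $\mathcal{F}_t(\mathbb{C}[V]_{S_n})=\omega\,Q'_{(1^n)}$, where $\omega$ is the involution $s_{\lambda}\mapsto s_{\widetilde{\lambda}}$; applying $\omega$ sends the coefficient $K_{\lambda,(1^n)}(t)$ of $s_{\lambda}$ in $Q'_{(1^n)}$ to the coefficient $K_{\widetilde{\lambda},(1^n)}(t)$ of $s_{\lambda}$, as required. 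The obstacle is then to establish the power-sum expansion of $Q'_{(1^n)}$ (the Green-polynomial formula of Macdonald III.7) and to verify that, after multiplication by $\prod_{i=1}^n(1-t^i)$, it reproduces the explicit expression for $\mathcal{F}_t(\mathbb{C}[V])$ above — precisely the classical computation referenced in \cite{Hall-Littlewood}. The transpose $\widetilde{\lambda}$ is forced by the charge convention of Theorem \ref{Lascoux & Schutzenberger}: it pairs the sign representation $S^{(1^n)}$, which occupies the top degree $\binom{n}{2}$ of the coinvariants, with $K_{(n),(1^n)}(t)=t^{\binom{n}{2}}$, and the trivial representation $S^{(n)}$, in degree $0$, with $K_{(1^n),(1^n)}(t)=1$, and $\omega$ implements exactly this reflection. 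As a final consistency check, setting $t=1$ recovers $p_{\lambda}(1)=K_{\widetilde{\lambda},(1^n)}(1)=f^{\widetilde{\lambda}}=\dim S^{\lambda}$, in agreement with the fact that $\mathbb{C}[V]_{S_n}$ is the regular representation of $S_n$.
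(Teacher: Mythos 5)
The paper does not actually prove this theorem: it is stated as a classical fact (``likely due to Frobenius'') with a citation to \cite{Hall-Littlewood}, so there is no internal argument to measure your proposal against. Your outline is the standard proof of this result and its reductions are all correct: $\mathcal{F}_t(\mathbb{C}[V]_{S_n})=\sum_{\lambda}p_{\lambda}(t)s_{\lambda}$ by definition of the Frobenius characteristic; the Chevalley factorization gives $\mathcal{F}_t(\mathbb{C}[V]_{S_n})=\prod_{i=1}^{n}(1-t^{i})\cdot\mathcal{F}_t(\mathbb{C}[V])$; the graded-trace computation identifies $\mathcal{F}_t(\mathbb{C}[V])$ with $\sum_{\rho\vdash n}p_{\rho}/\bigl(z_{\rho}\prod_{j}(1-t^{\rho_j})\bigr)=h_n[X/(1-t)]$; and the identification of the target with $\omega Q'_{(1^n)}$, including the bookkeeping that produces the transpose $\widetilde{\lambda}$, is right (for $n=2$ one checks $\omega Q'_{(1^2)}=s_{(2)}+ts_{(1,1)}$, matching trivial in degree $0$ and sign in degree $1$). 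Two caveats. First, the sentence asserting that the power-sum expansion of $Q'_{(1^n)}$ ``after multiplication by $\prod_{i=1}^n(1-t^i)$ reproduces $\mathcal{F}_t(\mathbb{C}[V])$'' has the multiplication on the wrong side and omits the $\omega$: the identity you need is $\omega Q'_{(1^n)}=\prod_{i=1}^{n}(1-t^{i})\,h_n[X/(1-t)]$, i.e.\ the product multiplies $\mathcal{F}_t(\mathbb{C}[V])$, not $Q'_{(1^n)}$. Second, you correctly observe that all the content of the theorem is concentrated in that one Hall--Littlewood identity, but you then invoke it from Macdonald rather than prove it; this is a legitimate citation, and it still leaves your argument, like the paper's, resting on an external reference --- the gain over the paper is that you make explicit exactly which classical identity is being used and how the weight $(1^n)$ and the conjugation enter. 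Your consistency checks at $t=1$ and at the extreme partitions $(n)$ and $(1^n)$ are correct and worth keeping.
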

By applying Theorem \ref{Lascoux & Schutzenberger}, we obtain the following corollary which frames the ``ordinary'' Gini index, $g$, within the context of the representation theory of the Symmetric group.
\begin{corollary}
Let $\lambda$ be a partition of $n$ indexing a irreducible representation $S^{\lambda}$ of $S_n$. The Gini index of $S^{\lambda}$, $g_{S_n}(S^{\lambda})$, is the degree of the graded multiplicity polynomial $p_{\widetilde{\lambda}}(t)$ of $S^{\widetilde{\lambda}}$. Moreover, $g_{S_n}(S^{\lambda})=g(\lambda)$. \label{Gini Sn}
\end{corollary}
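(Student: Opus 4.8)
The plan is to obtain the degree of the relevant graded multiplicity polynomial directly from the two theorems just established, reducing the whole statement to a one-line evaluation of $b((1^n))$ together with the observation that conjugation is an involution on $P_n$. Recalling that the Gini index $g_{S_n}(S^{\lambda})$ is the degree of the graded multiplicity polynomial $p_{\widetilde{\lambda}}(t)$ of the conjugate representation $S^{\widetilde{\lambda}}$, the task is to compute $\deg p_{\widetilde{\lambda}}(t)$ and to check that it equals $g(\lambda)$.

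First I would apply Theorem \ref{graded multiplicity kostka} with $\widetilde{\lambda}$ in the role of $\lambda$, which gives
\[ p_{\widetilde{\lambda}}(t)=K_{\widetilde{\widetilde{\lambda}},(1^n)}(t). \]
Since conjugation is the reflection of a Young diagram across its main diagonal, it is a bijection of order two, so $\widetilde{\widetilde{\lambda}}=\lambda$ and hence $p_{\widetilde{\lambda}}(t)=K_{\lambda,(1^n)}(t)$. Next I would invoke Theorem \ref{Lascoux & Schutzenberger}(2), whose hypothesis is the dominance comparison $\lambda\succeq(1^n)$. This holds for every $\lambda\vdash n$, because $(1^n)$ is the minimum of the dominance order on $P_n$: for each $k$ the partial sum $\sum_{i=1}^k\lambda_i$ is at least $k$ (either the first $k$ parts are all positive, each contributing at least $1$, or the partial sum has already reached $n\geq k$). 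Consequently $K_{\lambda,(1^n)}(t)$ is monic of degree $b((1^n))-b(\lambda)$, and it then remains only to evaluate $b((1^n))=\sum_{i=1}^n(i-1)=\binom{n}{2}$, so that
\[ \deg p_{\widetilde{\lambda}}(t)=\binom{n}{2}-b(\lambda)=\binom{n}{2}-\sum_{i=1}^n(i-1)\lambda_i=g(\lambda), \]
the final equality being the definition of $g$ from Section 3.3. This verifies both assertions of the corollary simultaneously.

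I do not expect any serious obstacle: the corollary is essentially a bookkeeping consequence of the Lascoux--Sch\"utzenberger degree formula. The single point that must be watched carefully is the conjugation. The graded multiplicity of $S^{\lambda}$ itself is $K_{\widetilde{\lambda},(1^n)}(t)$, whose degree is $b((1^n))-b(\widetilde{\lambda})=g(\widetilde{\lambda})$ rather than $g(\lambda)$; it is precisely because the Gini index of $S^{\lambda}$ is defined through the conjugate representation $S^{\widetilde{\lambda}}$ that the double conjugate collapses and the clean identity $g_{S_n}(S^{\lambda})=g(\lambda)$ emerges.
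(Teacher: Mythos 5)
Your proposal is correct and follows exactly the route the paper intends: apply Theorem \ref{graded multiplicity kostka} to $\widetilde{\lambda}$, use $\widetilde{\widetilde{\lambda}}=\lambda$ to get $p_{\widetilde{\lambda}}(t)=K_{\lambda,(1^n)}(t)$, and then read off the degree $b((1^n))-b(\lambda)=\binom{n}{2}-\sum_{i=1}^n(i-1)\lambda_i=g(\lambda)$ from Theorem \ref{Lascoux & Schutzenberger}(2) (equivalently, the $k=1$ case of Corollary \ref{degree corollary}). Your explicit justification that every $\lambda\vdash n$ dominates $(1^n)$ is a detail the paper leaves implicit, but it is exactly right.
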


We see here that, for the symmetric group, the degree of the graded multiplicity polynomial $p_{\lambda}$ yields the Gini index of the conjugate partition $\widetilde{\lambda}$. We accommodate for this by asserting that the Gini index of a irreducible representation $S^{\lambda}$ of the Symmetric group $S_n$ is the degree of the graded multiplicity polynomial $p_{\widetilde{\lambda}}$. This adjustment is to force the Gini index of the partition $\lambda$ to equal that of the irreducible representation $S^{\lambda}$. Since their irreducible representations do not have a canonical indexing, we will not encounter these difficulties when defining the Gini index for other complex reflection groups. 

At this point the relationship between the degree of the graded multiplicity polynomial and the Gini index may appear a bit forced. In the next chapter, we will see that this trend grows even stronger when we consider the graded multiplicities of irreducible representations of linear algebraic groups.
 
\section{Examples for $S_n$}
\begin{example}{\textbf{The Gini indices of $S_3$ irreps.}}

The symmetric group $S_3$ acts on $\mathbb{C}^3$ by permuting coordinates, and therefore acts on $\mathbb{C}[x_1,x_2,x_3]$. The symmetric polynomials in $3$ variables, $\Lambda_3$, are generated by the power sum polynomials:
\[\Lambda_3=\mathbb{C}[x_1+x_2+x_3,x_1^2+x_2^2+x_3^2,x_1^3+x_2^3+x_3^3].  \]
The partitions of $n=3$ are
\[ (1,1,1),\,(2,1),\text{ and }(3).  \]
These correspond to the sign representation, standard representation, and trivial representation, respectively. The standard tableaux of shape $\lambda$, for all $\lambda\vdash 3$ are
\begin{align*}
    \text{Standard tableaux of }(1,1,1)&:T_1=\young(1,2,3),\\
    \text{Standard tableaux of }(2,1)&:T_2=\young(12,3),\,T_3=\young(13,2),\text{ and}\\
    \text{Standard tableaux of }(3)&:T_4=\young(321).
\end{align*}
The charge statistics of these tableaux are 0, 2, 1, and 3, respectively. By Theorem \ref{Lascoux & Schutzenberger}, the corresponding  Kostka-Foulkes polynomials are
\begin{align*}
    K_{(1,1,1),(1^3)}(t)&=1,\\
    K_{(2,1),(1^3)}(t)&=t^2+t,\text{ and }\\
    K_{(3),(1^3)}(t)&=t^3.
\end{align*}
By Corollary \ref{Gini Sn} we find that the Gini indices of the irreducible representations of $S_3$ are
\begin{align*}
    g_{S_3}(S^{(1,1,1)})&=\deg(p_{(3)}(t))=\deg(K_{(1^3),(1^3)}(t))=0,\\
    g_{S_3}(S^{(2,1)})&=\deg(p_{(2,1)}(t))=\deg(K_{(2,1),(1^3)}(t))=2,\text{ and}\\
    g_{S_3}(S^{(3)})&=\deg(p_{(1,1,1)}(t))=\deg(K_{(3),(1^3)}(t))=3.
\end{align*}
Comparing these values to the ``ordinary'' Gini indices of
\begin{align*}
    g((1,1,1))&=0,\\
    g((2,1))&=2,\text{ and }\\
    g((3))&=3,
\end{align*}
we find, as desired, that they are equal.
\end{example}

\begin{example}{\textbf{The Gini indices of $S_4$ irreps.}}
The partitions of $n=4$ are
\[(1,1,1,1),\,(2,1,1),\,(2,2)\,(3,1),\text{ and }(4).  \]
These label the irreducible representations of $S_4$ as follows
\begin{align*}
    S^{(1,1,1,1)}&=\text{ The sign representation},\\
    S^{(2,1,1)}&=\text{ The standard representation},\\
    S^{(2,2)}&=\text{ (No name)},\\
    S^{(3,1)}&=\text{ sign $\otimes$ standard, and}\\
    S^{(4)}&=\text{The trivial representation}.
\end{align*}
The standard tableaux of shape $\lambda$ (for all $\lambda\vdash 4$), and their corresponding charge statistics are given in figure \ref{figureS}.
\begin{center}
\begin{figure}[h]
    \begin{align*}
    \lambda=(1,1,1,1)&:\,\young(1,2,3,4)\text{ charge}=0\\
    \lambda=(2,1,1)&:\, \young(12,3,4)\text{ charge}=3,\,\,\,\,\,\,\,\,\,\,\,\young(13,2,4)\text{ charge}=2,\,\,\,\,\,\,\,\,\,\,\young(14,2,3)\text{ charge}=1\\
    \lambda=(2,2)&:\,\young(12,34)\text{ charge}=4,\,\,\,\,\,\,\,\,\,\,\,\young(13,24)\text{ charge}=2\\
    \lambda=(3,1)&:\,\young(123,4)\text{ charge}=5,\,\,\,\,\,\young(124,3)\text{ charge}=4,\,\,\young(134,2)\text{ charge}=3\\
    \lambda=(4)&:\,\young(1234)\text{ charge}=6
\end{align*}
    \caption{Standard tableaux of shapes $\lambda\vdash 4$ and their charge statistics.}
    \label{figureS}
\end{figure}
\end{center}
By Theorem \ref{Lascoux & Schutzenberger}, the corresponding Kostka-Foulkes polynomials are    
\begin{align*}
    K_{(1^4),(1^4)}(t)&=1,\\
    K_{(2,1,1),(1^4)}(t)&=t^3+t^2+t,\\
    K_{(2,2),(1^4)}(t)&=t^4+t^2,\\
    K_{(3,1),(1^4)}(t)&=t^5+t^4+t^3,\text{ and}\\
    K_{(4),(q^4)}(t)&=t^6.
\end{align*}
By Corollary \ref{Gini Sn} we find that the Gini indices of the irreducible representations of $S_4$ are
\begin{align*}
    g_{S_4}(S^{(1^4)})&=\deg(p_{(4)}(t))=\deg(K_{(1^4),(1^4)}(t))=0\\
    g_{S_4}(S^{(2,1,1)})&=\deg(p_{(3,1)}(t))=\deg(K_{(2,1,1),(1^4)}(t))=3\\
    g_{S_4}(S^{(2,2)})&=\deg(p_{(2,2)}(t))=\deg(K_{(2,2),(1^4)}(t))=4\\
    g_{S_4}(S^{(3,1)})&=\deg(p_{(2,1,1)}(t))=K_{(3,1),(1^4)}(t)=5,\text{ and}\\
    g_{S_4}(S^{(4)})&=\deg(p_{(1^4)}(t))=\deg(K_{(4),(q^4)}(t))=6.
\end{align*}
Comparing these values to the ``ordinary'' Gini indices of
\begin{align*}
g((1,1,1,1))&=0,\\
g((2,1,1))&=3,\\
g((2,2))&=4,\\
g((3,1))&=5,\text{ and }\\
g((4))&=6.
\end{align*}

\end{example}
we find, as desired, that they are equal.

\section{Invariants of Complex Reflection Groups}

One might wonder whether the structures in the Section 5.1 occur similarly for other finite groups. This is not the true in general, as the results of Section 5.1 relied heavily on the fact that every polynomial can be uniquely written as a finite sum of symmetric polynomials multiplied by co-invariant polynomials. That is, there is an isomorphism
\[\Lambda_n\otimes \mathbb{C}[V]_{S_n}\longrightarrow\mathbb{C}[x_1,\ldots,x_n].  \]
This behavior can be attributed to the fact that the ring of symmetric polynomials is a finitely generated polynomial ring. Without this additional structure, the invariant ring $\mathbb{C}[V]^G$ is quite difficult to understand. It was shown by Shephard and Todd in \cite{Shephard-Todd} that the finite groups whose invariant ring has such a structure are precisely the so-called ``complex reflection groups''.

A linear transformation $r\in GL_n(\mathbb{C})$ is called a \emph{complex reflection} if $r$ has finite order and $r$ fixes a complex hyperplane (a co-dimension $1$ subspace of $\mathbb{C}^n$) pointwise. A \emph{complex reflection group}, $G$, is a finite subgroup of $GL_n(\mathbb{C})$ that is generated by complex reflections.  

Let $G$ be any finite group acting on a finite dimensional complex vector space $V$. Let $x_1,x_2,\ldots,x_n$ be a basis for $V$, and let $x=(x_1,\ldots,x_n)^t$ be the column vector of basis elements. Just as with the symmetric group, we can extend the action of $G$ on $V$ to an action on $\mathbb{C}[V]=\mathbb{C}[x_1,\ldots,x_n]$ by defining
\[ (gf)(x)=f(gx), \]
for any $g\in G$ and $f\in \mathbb{C}[V]$. Then $\mathbb{C}[V]$ is an infinite dimensional graded representation of $G$, and is graded by homogeneous degree. An element $f\in\mathbb{C}[V]$ is called an \emph{invariant} of $G$ (or a ``$G$-invariant'') if
\[gf=f,   \]
for all $g\in G$. The collection of all invariants of $G$ is a ring, and is called the \emph{invariant ring} of $G$; for this we will use the notation
\[\mathbb{C}[V]^G=\{f\in\mathbb{C}[V]:gf=f\text{ for all }g\in G\}.  \]

The aforementioned result due to Shephard and Todd, and later expanded by Chevalley in \cite{Chevalley}, is now presented.
\begin{theorem}(Shephard-Todd-Chevalley Theorem)
Let $V$ be a finite dimensional complex vector space, and let $G$ be a finite subgroup of $GL(V)$. The following are equivalent:
\begin{enumerate}
    \item $G$ is a complex reflection group.
    \item There are $\ell$ algebraically independent non-constant homogeneous polynomials $f_1,f_2,\ldots,f_{\ell}\in\mathbb{C}[V]$ such that
    \[\mathbb{C}[V]^G=\mathbb{C}[f_1,\ldots,f_{\ell}]\text{, and}  \]
    \[|G|=\deg(f_1)\cdot\deg(f_2)\cdot\cdots\cdot\deg(f_{\ell}).\]
\end{enumerate}
\end{theorem}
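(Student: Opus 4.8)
The plan is to establish both implications, using the Poincar\'e (Molien) series of the invariant ring as the principal tool and treating the harder implication $(1)\Rightarrow(2)$ by Chevalley's method. First I would record the basic structure. Because we work over $\mathbb{C}$, the averaging operator $\frac{1}{|G|}\sum_{g\in G}g$ projects $\mathbb{C}[V]$ onto $\mathbb{C}[V]^G$ in each homogeneous degree, and a trace computation gives Molien's formula
\[ \sum_{d\geq 0}\dim\left(\mathbb{C}[V]^G\cap\mathbb{C}[V]_d\right)t^d=\frac{1}{|G|}\sum_{g\in G}\frac{1}{\det(1-tg)}. \]
The single feature of this series I would extract is its behaviour near $t=1$: the identity contributes a pole of order $\ell=\dim V$ with leading coefficient $1/|G|$; each complex reflection, fixing a hyperplane, contributes a pole of order exactly $\ell-1$; and every remaining element contributes a pole of order at most $\ell-2$.

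For $(1)\Rightarrow(2)$ I would follow Chevalley. Let $I\subseteq\mathbb{C}[V]$ be the Hilbert ideal generated by all homogeneous $G$-invariants of positive degree, and pick homogeneous invariants $f_1,\ldots,f_\ell$ forming a minimal generating set of $I$. The technical heart is a syzygy lemma: if $f_1$ is not in the ideal generated by $f_2,\ldots,f_m$ and $\sum_i h_i f_i=0$ with the $h_i$ homogeneous, then $h_1\in I$. I would prove it by induction on $\deg h_1$, using the defining property of a reflection $r$ with reflecting hyperplane $\{\alpha=0\}$: the polynomial $p-rp$ is always divisible by $\alpha$. Averaging this divisibility over the reflections generating $G$ pushes $h_1$ into $I$. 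From the lemma one deduces that $f_1,\ldots,f_\ell$ are algebraically independent and generate $\mathbb{C}[V]^G$ as a $\mathbb{C}$-algebra, so the invariant ring is polynomial; the count $\ell=\dim V$ is forced because $\mathbb{C}[V]$ is integral over $\mathbb{C}[f_1,\ldots,f_\ell]$. I expect this syzygy lemma, together with the passage from it to algebraic independence, to be the main obstacle, as it is the only place where the reflection hypothesis enters in an essential, non-formal way.

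The degree identity $|G|=\prod_i\deg(f_i)$ then follows by comparing order-$\ell$ poles at $t=1$: once $\mathbb{C}[V]^G=\mathbb{C}[f_1,\ldots,f_\ell]$ the Molien series equals $\prod_i(1-t^{d_i})^{-1}$ with $d_i=\deg f_i$, whose leading behaviour near $t=1$ is $\left(\prod_i d_i\right)^{-1}(1-t)^{-\ell}$, and equating this with the identity term $|G|^{-1}(1-t)^{-\ell}$ gives $\prod_i d_i=|G|$. Reading off the next pole shows in addition that the number of reflections in $G$ equals $\sum_i(d_i-1)$, a fact I would reserve for the converse.

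For $(2)\Rightarrow(1)$ I would compare $G$ with the subgroup $H\leq G$ generated by all complex reflections in $G$. By construction $H$ is a reflection group, so the implication just proved makes $\mathbb{C}[V]^H$ polynomial, say with generator degrees $e_1,\ldots,e_\ell$, and realizes $\mathbb{C}[V]^H$ as a graded free $\mathbb{C}[V]^G$-module with Poincar\'e polynomial $P(t)=\prod_i(1-t^{d_i})/\prod_j(1-t^{e_j})$ satisfying $P(1)=[G:H]$. Since $G$ and $H$ contain exactly the same reflections, the reflection counts coincide, $\sum_i(d_i-1)=\sum_j(e_j-1)$, whence $\deg P=\sum_i d_i-\sum_j e_j=0$ and $P$ is constant. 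As the degree-zero part of $\mathbb{C}[V]^H$ is just $\mathbb{C}$, a free module with all generators in degree zero has rank one, so $[G:H]=P(1)=1$ and $G=H$ is a reflection group.
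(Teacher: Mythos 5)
Your proposal is sound, but note that the dissertation does not actually prove this theorem: it is stated as cited background, with the remark that Shephard and Todd established it by deriving their full classification of complex reflection groups (and that Chevalley later gave the uniform treatment). What you have written is essentially that uniform, classification-free proof: Chevalley's syzygy lemma and Hilbert-ideal argument for $(1)\Rightarrow(2)$, the Molien-series pole expansion at $t=1$ for the degree product and the reflection count $\sum_i(d_i-1)$, and the comparison of $G$ with its reflection subgroup $H$ for $(2)\Rightarrow(1)$. This buys a conceptual argument where the original had a case check, and the Molien computation dovetails nicely with the use of Stanley's theorem later in the chapter. Two steps are compressed enough to deserve expansion if this were written out in full. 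First, algebraic independence of $f_1,\ldots,f_\ell$ does not follow directly from the syzygy lemma: one must take a nontrivial polynomial relation $H(f_1,\ldots,f_m)=0$ of minimal degree, differentiate it with respect to the coordinates to produce a syzygy among the partials $\partial f_i/\partial x_j$ with coefficients $\partial H/\partial y_i$ evaluated at the $f$'s, and only then invoke the lemma to reach a contradiction; this is a separate argument with its own bookkeeping. Second, the assertion that $\mathbb{C}[V]^H$ is a \emph{free} graded $\mathbb{C}[V]^G$-module needs the standard justification that a finitely generated graded Cohen--Macaulay module of full dimension over a graded polynomial ring is free (here $\mathbb{C}[V]^H$ is itself polynomial, hence Cohen--Macaulay, and is finite over $\mathbb{C}[V]^G$ by Noether). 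With those two points filled in, the argument is complete and correct.
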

Shephard and Todd proved this result by deriving a full classification of such groups. The complex reflection groups consist of 34 exceptional groups, and three infinite families; the symmetric, cyclic, and ``imprimitive'' groups - the last of which contains the dihedral groups, which we will examine in detail later in this chapter.

\section{Graded Multiplicities and the Gini Index}
Let $V$ be a $n$-dimensional complex vector space with fixed basis $x_1,\ldots,x_n$, $G\subseteq GL(V)$ a complex reflection group, and $\mathbb{C}[V]^G=\mathbb{C}[f_1,\ldots,f_{\ell}]$ the invariant ring of $G$. The \emph{coinvariant ring} of $G$ is quotient ring
\[\mathbb{C}[V]_G=\mathbb{C}[V]/(f_1,\ldots,f_{\ell})  \]
by the ideal generated by the invariant ring generators. As was the case with the symmetric group, the invariant ring and coinvariant ring of a complex reflection group $G$ are graded representations of $G$. Furthermore, the coinvariant ring is isomorphic to the regular representation of $G$. Let 
\[ \mathbb{C}[V]_G=\bigoplus_{d\geq 0}\mathbb{C}[V]_G^d \]
be the gradation of the coinvariant ring of $G$ by homogeneous degree $d$, where \[\mathbb{C}[V]_G^d=\{f\in\mathbb{C}[V]_G:f\text{ is homogeneous of degree }d\}. \]

Let $V^{\lambda}$ be an irreducible representation of $G$ indexed by an irreducible character $\lambda$ of $G$. Denote by $[V^{\lambda}:\mathbb{C}[V]_G^d]$ the multiplicity of $V^{\lambda}$ in the homogeneous degree $d$ coinvariants of $G$. The \emph{graded multiplicity polynomial} of $V^{\lambda}$ is defined by
\[p_{\lambda}(t)=\sum_{d\geq 0}[V^{\lambda}:\mathbb{C}[V]_G^d]t^d.  \]
In \cite{Stanley}, Stanley proved that the graded multiplicity of a irreducible representation can be calculated using Molien's Theorem as follows.
\begin{theorem}(Stanley)
Let $f_1,\ldots,f_{\ell}$ be the generators of $\mathbb{C}[V]^G$, where $G\subseteq GL(V)$ is a complex reflection group, and set $d_i=\deg(f_i)$. Let $\lambda$ be an irreducible character of $G$, and let $V^{\lambda}$ be the corresponding irreducible representation. The graded multiplicity of $V^{\lambda}$ is given by
\[p_{\lambda}(t)=\frac{1}{|G|}\prod_{i=1}^{\ell}(1-t^{d_i})\sum_{T\in G}\frac{\overline{\lambda}(T)}{\det(I-tT)}.  \]
\label{stanley}
\end{theorem}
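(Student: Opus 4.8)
The plan is to first compute the graded multiplicity of $V^{\lambda}$ in the \emph{full} polynomial ring $\mathbb{C}[V]$ by character theory, and then descend to the coinvariant ring using the Shephard--Todd--Chevalley structure. For the first step, each graded piece $\mathbb{C}[V]_d$ is a finite dimensional representation of $G$, so the orthogonality relations for irreducible characters give the multiplicity of $V^{\lambda}$ in $\mathbb{C}[V]_d$ as the inner product
\[ [V^{\lambda}:\mathbb{C}[V]_d]=\frac{1}{|G|}\sum_{T\in G}\chi_d(T)\overline{\lambda}(T), \]
where $\chi_d$ denotes the character of $G$ acting on $\mathbb{C}[V]_d$. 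Assembling these into a generating series and interchanging the finite sum over $G$ with the sum over $d$ yields
\[ \sum_{d\geq 0}[V^{\lambda}:\mathbb{C}[V]_d]t^d=\frac{1}{|G|}\sum_{T\in G}\overline{\lambda}(T)\sum_{d\geq 0}\chi_d(T)t^d. \]

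Next I would evaluate the inner series $\sum_{d\geq 0}\chi_d(T)t^d$, which is exactly Molien's graded-trace computation. Since $T\in G\subseteq GL(V)$ has finite order it is diagonalizable with eigenvalues $\zeta_1,\ldots,\zeta_n$ on $V$, the trace of $T$ on $\mathbb{C}[V]_d$ is the complete homogeneous symmetric function $h_d(\zeta_1,\ldots,\zeta_n)$, and the standard identity $\sum_{d\geq 0}h_d(\zeta)t^d=\prod_i(1-t\zeta_i)^{-1}=\det(I-tT)^{-1}$ gives
\[ \sum_{d\geq 0}\chi_d(T)t^d=\frac{1}{\det(I-tT)} \]
(up to the harmless re-indexing $T\mapsto T^{-1}$ of the group sum that reconciles the dual-space conventions, using $\lambda(T^{-1})=\overline{\lambda}(T)$). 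Substituting, the graded multiplicity of $V^{\lambda}$ in all of $\mathbb{C}[V]$ is $\tfrac{1}{|G|}\sum_{T\in G}\overline{\lambda}(T)/\det(I-tT)$.

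The final and most substantial step is to relate the multiplicity in $\mathbb{C}[V]$ to the multiplicity $p_{\lambda}(t)$ in the coinvariant ring $\mathbb{C}[V]_G$. Here I would invoke the key consequence of the Shephard--Todd--Chevalley theorem: for a complex reflection group, $\mathbb{C}[V]$ is free as a module over $\mathbb{C}[V]^G=\mathbb{C}[f_1,\ldots,f_{\ell}]$, and there is a $G$-equivariant isomorphism of graded $G$-modules
\[ \mathbb{C}[V]\cong \mathbb{C}[V]^G\otimes\mathbb{C}[V]_G, \]
with $G$ acting trivially on the invariant factor. Since $\mathbb{C}[V]^G$ is a polynomial ring on algebraically independent homogeneous generators of degrees $d_i=\deg(f_i)$, its Hilbert series is $\prod_{i=1}^{\ell}(1-t^{d_i})^{-1}$, and because $G$ acts trivially on that factor the graded multiplicity of $V^{\lambda}$ in $\mathbb{C}[V]$ factors as $p_{\lambda}(t)\prod_{i=1}^{\ell}(1-t^{d_i})^{-1}$. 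Equating this with the expression from the first two steps and multiplying through by $\prod_{i=1}^{\ell}(1-t^{d_i})$ produces the claimed formula. The main obstacle is justifying the equivariant tensor decomposition, which is precisely where the reflection-group hypothesis is indispensable: it rests on the freeness of $\mathbb{C}[V]$ over $\mathbb{C}[V]^G$ together with the fact that a homogeneous $\mathbb{C}$-basis of $\mathbb{C}[V]_G$ lifts to a $\mathbb{C}[V]^G$-module basis of $\mathbb{C}[V]$ that is compatible with the $G$-action.
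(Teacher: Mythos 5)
Your proposal is correct, but there is nothing in the paper to compare it against: the dissertation states this result as Stanley's theorem with a citation to his 1979 \emph{Bulletin} survey and offers no proof of its own. Your argument is the standard one and it is essentially complete: (i) character orthogonality gives $[V^{\lambda}:\mathbb{C}[V]_d]$ as an inner product with the character $\chi_d$ of the degree-$d$ piece; (ii) Molien's computation identifies $\sum_d\chi_d(T)t^d$ with $\det(I-tT)^{-1}$ after diagonalizing the finite-order element $T$ (and you rightly flag the $T\mapsto T^{-1}$ reindexing that absorbs the $V$-versus-$V^*$ convention, since $(\sigma f)(x)=f(\sigma x)$ really makes $\mathbb{C}[V]$ the symmetric algebra on the dual); (iii) the Shephard--Todd--Chevalley freeness gives the graded $G$-module factorization $\mathbb{C}[V]\cong\mathbb{C}[V]^G\otimes\mathbb{C}[V]_G$ with trivial action on the first factor, so the full multiplicity series is $p_{\lambda}(t)\prod_i(1-t^{d_i})^{-1}$. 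The only step that deserves a more careful citation or argument is the equivariance in (iii): the clean way to get it is to take a $G$-stable graded complement to the ideal $(f_1,\ldots,f_{\ell})$ (for instance the harmonics, by averaging, since $G$ is finite) and invoke Chevalley's theorem that multiplication $\mathbb{C}[V]^G\otimes H\to\mathbb{C}[V]$ is then an isomorphism of graded $G$-modules with $H\cong\mathbb{C}[V]_G$; your phrasing in terms of lifting a basis compatibly with the $G$-action is a slightly informal stand-in for this, but the content is right and you correctly locate it as the place where the reflection-group hypothesis is indispensable.
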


We define the Gini index of an irreducible representation of a complex reflection group as follows. If $G$ is the symmetric group $S_n$, then the Gini index of a irreducible representation $S^{\lambda}$ (indexed by the partition $\lambda\vdash n$) is, as before, the degree of the graded multiplicity polynomial of the ``conjugate'' representation:
\[ g_{S_n}(S^{\lambda})=\deg(p_{\widetilde{\lambda}}(t))=\deg(K_{\lambda,(1^n)}(t)).  \]
If $G$ is any other complex reflection group, we define the Gini index of an irreducible representation $V^{\lambda}$ (indexed by an irreducible character $\lambda$) by 
\[g_{G}(V^{\lambda})=\deg(p_{\lambda}(t))\]
The conjugation performed when calculating the Gini index of a irreducible representation of $S_n$ is necessary, as (unlike most other finite groups) the irreducible representations of $S_n$ have an accepted standard indexing, and we want $g_{S_n}$ to agree with the Gini index defined on the index set $P_n$. No adjustments are necessary for other complex reflection groups, as the indexing of their irreducible representations is not standardized.

Recall that the coinvariant ring $\mathbb{C}[V]_G$ of a complex reflection group $G$ is isomorphic to the regular representation of $G$; that is, the representation of $G$ acting on itself by left translations. The regular representation of a finite group $G$ always decomposes as the direct sum of irreducible representations
\[ \bigoplus_{\lambda\in \widehat{G}}\left(V^{\lambda}\right)^{\dim(V^{\lambda})} ,\]
where each irreducible representation $V^{\lambda}$ of $G$ occurs with multiplicity equal to its dimension. in other words, every irreducible representation of $G$ occurs in the decomposition of the regular representation of $G$, and therefore occurs in the decomposition of the coinvariant ring of $G$. Thus the Gini index $g_G$ is well defined. 

\section{The Gini Index of an Irreducible Representation of the Dihedral Group}
Let $n\geq 3$. The dihedral group $D_{2n}$ of order $2n$ is an example of a complex reflection group, and belongs the infinite family of ``imprimitive'' complex reflection groups. We will describe the irreducible characters of $D_{2n}$ and fix an indexing for these characters. We will then apply Stanley's theorem to determine formulas for the Gini index $g_{D_{2n}}$, based on our choice of index set. 

The dihedral group of order ${2n}$, when viewed as the group of rigid motions of the regular $n$-gon, is generated by a rotation $r$ by $\frac{2\pi}{n}$ radians and a reflection $s$, with relations
\[D_{2n}=\langle s,r:s^2=r^n=1, srs=r^{-1} \rangle,  \]
where $1$ is the group identity. All rotations in $D_{2n}$ are of the form $r^k$, and all reflections are of the form $sr^k$,
for $1\leq k \leq n$. Using these facts, we can write out explicit formulas for the irreducible representations of $D_{2n}$. The dihedral group only has $1$- and $2$-dimensional irreducible representations, but the number of irreps with these dimensions depends on the parity of $n$. 

If $n$ is odd, $D_{2n}$ has two $1$-dimensional irreps, whereas $D_{2n}$ has four $1$-dimensional irreps if $n$ is even. These characters are given in tables \ref{odd 1-d characters}, and \ref{even 1-d characters}.

\begin{center}
    \begin{tabular}{|P{1.5cm}||P{1.5cm}|P{1.5cm}|}
 \hline
         &$sr^k$&$r^k$  \\
         \hline
         $\chi^1$&$1$&$1$ \\
         \hline
         $\chi^2$&$-1$&$1$\\
         \hline
    \end{tabular}
    \captionof{table}{$1$-dimensional characters of $D_{2n}$ when $n$ is odd}\label{odd 1-d characters}
\end{center}

\begin{center}
    \begin{tabular}{|P{1.5cm}||P{1.5cm}|P{1.5cm}|}
    \hline
         &$sr^k$&$r^k$  \\
         \hline
         $\chi^1$&$1$&$1$ \\
         \hline
         $\chi^2$&$-1$&$1$\\
         \hline
         $\chi^3$&$(-1)^k$&$(-1)^k$\\
         \hline
         $\chi^4$&$(-1)^{k+1}$&$(-1)^k$\\
         \hline
    \end{tabular}
    \captionof{table}{$1$-dimensional characters of $D_{2n}$ when $n$ is even}\label{even 1-d characters}
\end{center}
The number of $2$-dimensional irreps of $D_{2n}$ also depends on the parity of $n$. If $n$ is odd then $D_{2n}$ has $\frac{n}{2}-1$ $2$-dimensional irreps, whereas $D_{2n}$ has $\frac{n-1}{2}$ $2$-dimensional irreps if $n$ is even. The formulas for these irreps are given by
\begin{align*}
\rho_j(sr^k)&=\begin{bmatrix} 0&e^{-2\pi i j k/n}\\ e^{2\pi i j k/n}&0 \end{bmatrix},\\ 
\rho_j(r^k)&=\begin{bmatrix}e^{2\pi i j k/n}&0\\0&e^{-2\pi i j k/n}
\end{bmatrix},\\
\end{align*}
where $1\leq k \leq n$ and $0<j<\frac{n}{2}$ indexes the $2$-dimensional irreps. 

\begin{example}
The left regular representation $\rho:D_6\longrightarrow GL_6(\mathbb{C})$ is the representation afforded by the left action of $D_6$ on itself. In terms of matrices, $\rho$ is defined by
\[\rho(s)=\begin{bmatrix}0&1&0&0&0&0\\
        1&0&0&0&0&0\\
        0&0&0&0&1&0\\
        0&0&0&0&0&1\\
        0&0&1&0&0&0\\
        0&0&0&1&0&0
        \end{bmatrix}\text{ and}  \]
        \[\rho(r)=\begin{bmatrix}
        0&0&1&0&0&0\\
        0&0&0&0&0&1\\
        0&0&0&1&0&0\\
        1&0&0&0&0&0\\
        0&1&0&0&0&0\\
        0&0&0&0&1&0
        \end{bmatrix}.  \]
By the above discussion, since $D_6=D_{2\cdot 3}$, and $3$ is odd, $D_6$ has two $1$-dimensional irreps, $\chi^1$ and $\chi^2$, and one $2$-dimensional irrep, $\rho_1$. The left regular representation $\rho$ is $6$-dimensional, and decomposes as
\[\rho=\chi^1\oplus\chi^2\oplus 2\rho_1.  \]
Each irreducible representation appears in the decomposition of $\rho$ with multiplicity equal to its dimension, which is true for the left regular representation of any finite group.
\end{example}

To determine formulas for the graded multiplicity polynomials of $D_{2n}$, we represent $D_{2n}$ as a subgroup of a general linear group $GL_2(\mathbb{C})$ via the (faithful) representation $\rho_1$. Under this identification we see that the reflections and rotations of $D_{2n}$ are respectively given by the matrices
\[ sr^k= \begin{bmatrix} 0&e^{-2\pi i k/n}\\ e^{2\pi i k/n}&0 \end{bmatrix}\text{ and}\]
\[ r^k=\begin{bmatrix}e^{2\pi i k/n}&0\\0&e^{-2\pi i k/n}
\end{bmatrix} .\]
The polynomials
\[ f_1(x_1,x_2)=x_1x_2\text{ and} \]
\[ f_2(x_1,x_2)=x_1^n+x_2^n\]
are algebraically independent, and are also invariants of $D_{2n}$. Moreover, since 
\[ \deg(f_1)\cdot\deg(f_2)=2n, \]
by the Shephard-Todd-Chevalley theorem, these polynomials generate the ring of invariants of $D_{2n}$. That is,
\[\mathbb{C}[x_1,x_2]^{D_{2n}}=\mathbb{C}[f_1,f_2].  \]

We will now apply Stanley's theorem to the dihedral group to acquire formulas for the graded multiplicity polynomials, and in turn, the Gini index $g_{D_{2n}}$. 

Let $n\geq 3$. $D_{2n}$ has at least two $1$-dimensional irreducible representations, $\chi^1$ and $\chi^2$. Since these representations are $1$-dimensional, they are the same as their characters, and we may apply Stanley's theorem directly to find their graded multiplicity polynomials:
\begin{align*}
    p_{\chi^1}(t)&=\frac{(1-t^2)(1-t^n)}{2n}\sum_{T\in D_{2n}}\frac{1}{\det(I-tT)}\\
    &=\frac{(1-t^2)(1-t^n)}{2n}\sum_{k=1}^n\left(\frac{1}{(1-t^2)}+\frac{1}{(1-t\omega^k)(1-t\omega^{-k})}\right)\\
    &=\frac{(1-t^n)}{2}+\frac{1}{2n}\sum_{k=1}^n\frac{(1-t^2)(1-t^n)}{(1-t\omega^k)(1-t\omega^{-k})}\\
    &=\frac{(1-t^n)}{2}+\frac{(1+t^n)}{2}\\
    &=1,
\end{align*}
where $\omega=e^{2\pi i/n}$. Similarly, we find that 
\[p_{\chi^2}(t)=t^n,  \]
and if $n$ is even,
\begin{align*}
    p_{\chi^3}(t)&=t^{n/2}\text{, and}\\
    p_{\chi^4}(t)&=t^{n/2}.
\end{align*}
    To determine the graded multiplicity polynomials of the $2$-dimensional irreps $\rho_j$, we first calculate their characters $\chi_j$:
    \begin{align*}
        \chi_j(sr^k)&=0\text{ and}\\
        \chi_j(r^k)&=2\cos({2\pi jk}/{n}),
    \end{align*}
    for $1\leq k \leq n$ and $0<j<n/2$.
    Applying Stanley's theorem yields
    \[p_{\chi_j}(t)= t^{n-j}+t^j. \]
    The values of the Gini index $g_{D_{2n}}$ on the irreducible representations of the dihedral group $D_{2n}$ are given in table \ref{dihedral gini}.
    
    \begin{center}
    \begin{tabular}{|P{3cm}||P{3cm}|P{3cm}|P{3cm}|}
    \hline
         {Irreducible Representation}&{Dimension}&{Graded Multiplicity}&{Gini Index, $g_{D_{2n}}$}  \\
         \hline\hline
         $\chi^1$&$1$&$1$&$0$ \\
         \hline
         $\chi^2$&$1$&$t^n$&$n$\\
         \hline
         $\chi^3$ (for $n$ even)&$1$&$t^{n/2}$&$\frac{n}{2}$\\
         \hline
         $\chi^4$ (for $n$ even)&$1$&$t^{n/2}$&$\frac{n}{2}$\\
         \hline
         $\rho_j$ ($0<j<n/2$)&$2$&$t^{n-j}+t^j$&${n-j}$\\
         \hline
    \end{tabular}
    \captionof{table}{Values of the Gini Index $g_{D_{2n}}$ on an irreducible representation of $D_{2n}$.}\label{dihedral gini}
\end{center}
    

\newpage

\chapter{The Gini Index and Connected Reductive Linear Algebraic Groups}
Let $n$ and $k$ be positive integers, and let $\alpha$ be a decreasing sequence of $n$ integers. Such sequences index the irreducible rational representations, $V^{\alpha}$, of $GL_n(\mathbb{C})$. As was the case with complex reflection groups, $GL_n(C)$ has certain graded representations of invariants and \emph{harmonics} that exhibit special properties. To the representation $V^{\alpha}$ we associate a polynomial, $p_{\alpha}$, which is again called the ``graded multiplicity'' of $V^{\alpha}$ in the harmonics. To the sequence $\alpha$ there is also an associated pair of partitions $\lambda$ and $\mu$ of $nk$, and it turns out that the Kostka-Foulkes polynomial $K_{\lambda,\mu}$ is the graded multiplicity of $V^{\alpha}$ in the harmonics. We will see that the degrees of the graded multiplicity polynomials $p_{\alpha}$ are exactly the values of the Gini index $g_{nk,n}$ on $P_{nk}$.

The general linear group is the principle example of a connected reductive linear algebraic group. Using graded multiplicities, we will extend the notion of the Gini index to all other connected reductive linear algebraic groups.

\section{Harmonics of Connected Reductive Linear Algebraic Groups}

Let $G$ be a connected reductive linear algebraic group over $\mathbb{C}$. The general linear group belongs to this family, as does any semisimple linear algebraic group. Let $V$ be a $n$-dimensional rational representation of $G$, and choose a basis $x_1,x_2,\ldots,x_n$ for $V$. Let $\mathbb{C}[V]=\mathbb{C}[x_1,x_2,\ldots,x_n]$ denote the algebra of polynomial functions on $V$. This algebra is a infinite dimensional graded representation of $G$ with gradation
\[ \mathbb{C}[V]=\bigoplus_{d\geq 0}\mathbb{C}[V]_d, \]
where $\mathbb{C}[V]_d$ is the vector space of homogeneous degree $d$ polynomials in $\mathbb{C}[V]$. Just as we defined the invariant and coinvariant rings of a complex reflection group, we can define the invariant and coinvariant rings of a linear algebraic group $G$. Let \[\mathbb{C}[V]^G=\{f\in\mathbb{C}[V]:gf=f\text{ for all }g\in G \}\] 
be the ring of $G$-invariant polynomials in $\mathbb{C}[V]$. The coinvariant ring of $G$ is the quotient

\[ \mathbb{C}[V]_G=\mathbb{C}[V]/\mathbb{C}[V]_+^G, \]
of the full polynomial ring by the ideal $\mathbb{C}[V]^G_+$ of $G$-invariant polynomials without constant term. When discussing the coinvariants of a linear algebraic group it is more common to define them in terms of $G$-harmonic functions.\\

Let $\partial_i=\frac{\partial}{\partial x_i}$ and, for $f\in\mathbb{C}[V]$, define 
\[f(\partial)=f(\partial_1,\partial_2,\ldots,\partial_n).  \]
The constant coefficient differential operators on $\mathbb{C}[V]$ is the set
\[\mathcal{D}(V)=\{f(\partial):f\in\mathbb{C}[V]  \}.  \]
Denote by $\mathcal{D}[V]_+$ the set of differential operators without constant term, and by $\mathcal{D}[V]^G_+$ the set of $G$-invariant differential operators with no constant term. The module of $G$-\emph{harmonic} polynomials defined as
\[ \mathcal{H}(V)=\{f\in\mathbb{C}[V]:\Delta f=0\text{ for all }\Delta\in\mathcal{D}(V)^G_+\}. \]
The $G$-harmonic functions, and $G$-coinvariants are isomorphic as graded representations of $G$, and are graded by homogeneous degree:
\[ \mathcal{H}(V)=\bigoplus_{d\geq 0}\mathcal{H}^d(V), \]
where $\mathcal{H}^d(V)=\mathcal{H}(V)\cap\mathbb{C}[V]_d$. This fact is non-trivial, since it says that if a function is harmonic, then so are its homogeneous components. In general, every polynomial function can be expressed as a sum of $G$-invariant functions multiplied by $G$-harmonic functions. In other words, there is a surjection
\[ \mathbb{C}[V]^G\otimes\mathcal{H}(V)\longrightarrow\mathbb{C}[V]\longrightarrow 0 \]
obtained by linearly extending multiplication. This corresponds to the separation of variables that one does when studying the Laplace operator in two dimensions. The two tensor components correspond to the decomposition into radial and spherical parts, respectively. A guiding question in the study of $G$-harmonic functions was (and is), ``When is the above map an isomorphism?" That is, when is the sum of products of invariants and harmonics unique? Equivalently, when is $\mathbb{C}[V]$ a free module over $\mathbb{C}[V]^G$? This question was partially answered by Kostant in his pivotal 1963 paper \emph{Lie group representations on polynomials rings} (see \cite{Kostant}). \\

Let $\mathfrak{g}$ denote the Lie algebra of $G$, and let $\text{Ad}:G\longrightarrow GL(\mathfrak{g})$ denote the Adjoint representation of $G$, defined by
\[ (\text{Ad}x)(A)=xAx^{-1}, \]
for $x\in G$ and $A\in\mathfrak{g}$. Among the many results in \cite{Kostant}, Kostant proved that $\mathbb{C}[\mathfrak{g}]$ is a free module over $\mathbb{C}[\mathfrak{g}]^G$, for any connected reductive group $G$.

\section{Graded Multiplicities and the Gini Index}

Let $G$ be a connected reductive linear algebraic group, and let $\mathfrak{g}$ be the Lie algebra of $G$. Let $\mathcal{H}(\mathfrak{g})$ be the infinite-dimensional module of $G$-harmonic functions. The graded components of $\mathcal{H}(\mathfrak{g})$ are all finite dimensional, and since $G$ is reductive, $\mathcal{H}^d(\mathfrak{g})$ is completely reducible, for each $d\geq 0$. Given a dominant weight $\alpha\in P_{++}(G)$ of $G$, what is the multiplicity,
\[[V^{\alpha}:\mathcal{H}^d(\mathfrak{g})]=?,\]
of $V^{\alpha}$ in $\mathcal{H}^d(\mathfrak{g})$, where $V^{\alpha}$ is the irreducible rational representation of $G$ with highest weight $\alpha$. A natural thing to do, as in Chapter 5, is consider the, indeed, polynomial defined by the series
\[ p_{\alpha}(t)=\sum_{d\geq 0}[V^{\alpha}:\mathcal{H}^d(\mathfrak{g})]t^d, \]
which is called the \emph{graded multiplicity} of $V^{\alpha}$ in the $G$-harmonic functions. These polynomials extract deep information in representation theory, but outside of Kostant's setting, very little is known about them. Hesselink showed in \cite{Hesselink} that if $G$ is semisimple, then $p_{\alpha}(t)$ can be expressed as an alternating sum in terms of Kostant's partition function. In particular, if $\mathfrak{g}$ is of Lie type $A$, then Hesselink's alternating sum formula shows that $p_{\alpha}(t)$ is equal to a Kostka-Foulkes polynomial $K_{\lambda,\mu}(t)$, where $\lambda$ and $\mu$ are integer partitions that depend (to a certain extent) on $\alpha$. This case is explored in the next section.

Kostant defined the \emph{generalized exponents} of $V^{\alpha}$ to be the exponents $e_1,e_2,\ldots,e_s$ of the nonzero terms in the graded multiplicity,
\[p_{\alpha}(t)=\sum_{d\geq 0}[V^{\alpha}:\mathcal{H}^d(\mathfrak{g})]t^d=\sum_{i=1}^s c_it^{e_i},  \]
of $V^{\alpha}$ in the $G$-harmonic functions. Due to the connections between $p_{\alpha}(t)$ and the Kostka-Foulkes polynomials in Lie type A, we define the Gini index $g_G(V^{\alpha})$ of an irreducible rational representation of $G$ as the maximum of the generalized exponents,
\[ g_G(V^{\alpha})=\max\{e_1,\ldots,e_s\}, \]
or equivalently, as the degree of the graded multiplicity of $V^{\alpha}$ in the $G$-harmonic functions,
\[ g_G(V^{\alpha})=\deg(p_{\alpha}(t)).  \]
Unlike the case of complex reflection groups, not every irreducible rational representation of $G$ occurs in the decomposition of the harmonics. So that $g_G$ is well defined, we will adopt the convention that the degree of the zero polynomial is $-\infty$. Thus if $V^{\alpha}$ is an irreducible rational representation of $G$, and $p_{\alpha}(t)=0$, the Gini index is
\[g_G(V^{\alpha})=-\infty.  \]

We will see in the following section that if $G=GL_n(\mathbb{C})$ and $V^{\alpha}$ is an irreducible polynomial representation of $G$, then the Gini index $g_{GL_n(\mathbb{C})}(V^{\alpha})$ is given by the Gini index $g_{nk,n}(\lambda)$, for a certain partition $\lambda$ of $nk$.

\newpage
\section{The Gini index of an irreducible representation of $GL_n(\mathbb{C})$}
As seen in Chapter 2, the highest weights of the irreducible rational representations of  $GL_n(\mathbb{C})$ are canonically indexed by the set of $n$-tuples of non-increasing integers. Let $\alpha=(\alpha_1,\alpha_2,\ldots,\alpha_n)$ and $\beta=(\beta_1,\beta_2,\ldots,\beta_n)$ be two dominant weights of $GL_n(\mathbb{C})$. The sum $\alpha+\beta$ is defined as a sum of vectors
\[ \alpha+\beta=(\alpha_1+\beta_1,\alpha_2+\beta_2,\ldots,\alpha_n+\beta_n). \]
The dominant weights of $GL_n(\mathbb{C})$ are equipped with a equivalence relation, under which equivalent weights yield isomorphic highest weight representations. Two dominant weights, $\alpha$ and $\beta$, are equivalent if there is a positive integer $k$ such that
\[\alpha=\beta+(k^n),  \]
where $(k^n)=(k,k,\ldots,k)$ is the so-called ``flat'' partition of $nk$ containing $n$ values of $k$. 

Let $\alpha=(\alpha_1,\alpha_2,\ldots,\alpha_n)$ be a non-zero dominant weight of $GL_n(\mathbb{C})$ such that
\[\alpha_1+\alpha_2+\cdots+\alpha_n=0.  \]
As $\alpha$ is non-zero, the last term, $\alpha_n$, in $\alpha$ is always negative. Let $k$ be an integer with $k\geq-\alpha_n$, and define
\[\lambda=\alpha+(k^n). \]
Then $\lambda$ is a partition of $nk$ with at most $n$ parts. If $k=-\alpha_n$, then $\lambda_n=0$ so $\lambda$ is a partition of $nk$ with at most $n-1$ parts. The weights $\lambda$ and $\alpha$ are both dominant weights of $GL_n(\mathbb{C})$, and are equivalent; hence the corresponding irreducible highest weight representations $V^{\lambda}$ and $V^{\alpha}$ are isomorphic. \\

Let $\alpha$ be a dominant weight of $GL_n(\mathbb{C})$. When does the irreducible representation $V^{\alpha}$ of $GL_n(\mathbb{C})$ of highest weight $\alpha$ appear in the graded decomposition of the harmonics of $GL_n(\mathbb{C})$? That is, when is the graded multiplicity polynomial $p_{V^{\alpha}}(t)$ nonzero? The answer, due to Kostant, is that the only irreducible highest weight representations of $GL_n(\mathbb{C})$ with positive multiplicity in $\mathcal{H}(\mathfrak{g})$ are those with highest weights that sum to zero (c.f. \cite{Kostant}). Combining this fact with the details of the above discussion yields the following theorem (originally shown in \cite{Hall-Littlewood}):

\begin{theorem}
Let $V^{\alpha}$ be an irreducible representation of $GL_n(\mathbb{C})$ with highest weight $\alpha\in\mathbb{Z}^n$ such that $\alpha_1\geq\cdots\geq\alpha_n$. 
\begin{enumerate}
    \item If $\alpha_1+\ldots+\alpha_n\neq 0$, then $p_{V^{\alpha}}(t)=0$. 
    \item If $\alpha_1+\ldots+\alpha_n=0$, then 
    \[p_{V^{\alpha}}(t)=K_{\lambda,(k^n)}(t),  \]
    where $k\geq |\alpha_n|$, and $\lambda=\alpha+(k^n)$.
\end{enumerate}
\end{theorem}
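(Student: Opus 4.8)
The plan is to separate the two cases, settling the vanishing statement (1) by a central-character argument and the formula (2) by reducing the graded multiplicity to the $q$-analogue of a weight multiplicity. For (1), observe that the adjoint action annihilates the center: a scalar matrix $zI$ satisfies $(\mathrm{Ad}\,zI)(A)=(zI)A(zI)^{-1}=A$, so the center $Z=\{zI:z\in\mathbb{C}^{\times}\}$ acts trivially on $\mathfrak{g}=\mathfrak{gl}_n(\mathbb{C})$, hence on $\mathbb{C}[\mathfrak{g}]$ and on each graded harmonic space $\mathcal{H}^d(\mathfrak{g})$. On the other hand, every weight $\beta$ of $V^{\alpha}$ differs from $\alpha$ by a sum of roots $\varepsilon_i-\varepsilon_j$, each with coordinate-sum zero, so $zI$ acts on all of $V^{\alpha}$ by the single scalar $z^{\alpha_1+\cdots+\alpha_n}$. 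When $\alpha_1+\cdots+\alpha_n\neq 0$ this central character is nontrivial, so $V^{\alpha}$ cannot occur in the trivial-central-character module $\mathcal{H}(\mathfrak{g})$, and $p_{V^{\alpha}}(t)=0$.

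For (2) I would assume $\alpha_1+\cdots+\alpha_n=0$ and set $\lambda=\alpha+(k^n)$ with $k\geq|\alpha_n|$, so that $\lambda$ is a partition of $nk$ with at most $n$ parts. The decisive input is the theorem of Kostant and Hesselink (\cite{Kostant}, \cite{Hesselink}), which identifies the graded multiplicity of $V^{\alpha}$ in the harmonics with the $q$-analogue $m_{\alpha}^{0}(t)$ of the multiplicity of the zero weight in $V^{\alpha}$. I would then pass from $\alpha$ to $\lambda$ by tensoring with the determinant: since $V^{\alpha}\otimes(\det)^{k}\cong V^{\lambda}$ shifts every weight uniformly by $(k^n)$, and $(k^n)$ is fixed by the Weyl group, the zero weight of $V^{\alpha}$ is carried to the weight $(k^n)$ of $V^{\lambda}$ while the $q$-analogue is unchanged, giving $m_{\alpha}^{0}(t)=m_{\lambda}^{(k^n)}(t)$. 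The classical Lie type $A$ identification of this $q$-analogue with a Kostka--Foulkes polynomial then yields $m_{\lambda}^{(k^n)}(t)=K_{\lambda,(k^n)}(t)$, as claimed.

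Two points deserve a remark. First, the formula is manifestly independent of the choice of $k$: the quantity $p_{V^{\alpha}}(t)=m_{\alpha}^{0}(t)$ never mentions $k$, so $K_{\lambda,(k^n)}(t)$ is the same for every admissible $k$; concretely, increasing $k$ adjoins a height-$n$ rectangular block to both $\lambda$ and $(k^n)$, which leaves the Kostka--Foulkes polynomial unchanged. Second, the connection to the Gini index is then immediate: every partition $\lambda$ of $nk$ with at most $n$ parts dominates the flat partition $(k^n)$, so Theorem \ref{Lascoux & Schutzenberger} shows $K_{\lambda,(k^n)}(t)$ is monic of degree $b((k^n))-b(\lambda)=g_{nk,n}(\lambda)$, recovering Corollary \ref{degree corollary}.

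I expect the genuine obstacle to be the first equality $p_{V^{\alpha}}(t)=m_{\alpha}^{0}(t)$, namely that the graded multiplicity in the harmonics is computed by the $q$-analogue of the zero-weight multiplicity. This is the deep part, resting on Kostant's separation-of-variables isomorphism $\mathbb{C}[\mathfrak{g}]\cong\mathbb{C}[\mathfrak{g}]^{G}\otimes\mathcal{H}(\mathfrak{g})$ together with Hesselink's alternating-sum evaluation of the resulting generating function; once it is granted, all the remaining steps are essentially bookkeeping with weights and columns of Young diagrams.
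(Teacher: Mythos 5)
Your proposal is correct, and it actually supplies arguments where the paper contents itself with citations: the paper's ``proof'' of this theorem consists of quoting Kostant for the vanishing statement and attributing the Kostka--Foulkes identification to the references (\cite{Kostant}, \cite{Hall-Littlewood}), combined with the preceding discussion of the shift $\lambda=\alpha+(k^n)$. For part (1) your central-character argument is a clean, self-contained replacement for the citation: the adjoint action kills the center, so every constituent of $\mathcal{H}^d(\mathfrak{g})$ has trivial central character, while $V^{\alpha}$ has central character $z\mapsto z^{\alpha_1+\cdots+\alpha_n}$; this is exactly the mechanism behind Kostant's statement and is worth writing out. For part (2) you correctly isolate the deep input, namely the Kostant--Hesselink identification $p_{V^{\alpha}}(t)=m^{0}_{\alpha}(t)$ with Lusztig's $q$-analogue of the zero-weight multiplicity, and then reduce to $K_{\lambda,(k^n)}(t)$ by the determinant twist and the type $A$ identification of the $q$-analogue with a Kostka--Foulkes polynomial. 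One point where you are in fact more careful than the paper: the paper asserts that $V^{\lambda}$ and $V^{\alpha}$ are \emph{isomorphic} as $GL_n(\mathbb{C})$-representations because the weights are ``equivalent,'' which is false as stated (they differ by the twist $V^{\lambda}\cong V^{\alpha}\otimes(\det)^{k}$ and have different central characters); your formulation via the determinant twist, together with the observation that the Weyl group fixes $(k^n)$ so the $q$-analogue is unchanged, is the correct way to justify the independence of $k$ and the passage from the zero weight of $V^{\alpha}$ to the weight $(k^n)$ of $V^{\lambda}$.
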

We then obtain the following corrolary, which characterizes the Gini index $g_{GL_n(\mathbb{C})}$ on the irreducible representations of $GL_n(\mathbb{C})$. 
\begin{corollary}
Let $V^{\alpha}$ be a irreducible representation of $GL_n(\mathbb{C})$ with highest weight $\alpha\in\mathbb{Z}^n$ such that $\alpha_1\geq\cdots\geq\alpha_n$. 
\begin{itemize}
    \item If $\alpha_1+\cdots+\alpha_n\neq 0$, then \[g_{GL_n(\mathbb{C})}(V^{\alpha})=-\infty.\]
    \item If $\alpha_1+\cdots+\alpha_n=0$, then
    \[ g_{GL_n(\mathbb{C})}(V^{\alpha})=\deg(K_{\lambda,(k^n)}(t)), \]
    where $k\geq |\alpha_n|$ and $\lambda=\alpha+(k^n)$.
\end{itemize}
\label{GLn Gini Characterization}
\end{corollary}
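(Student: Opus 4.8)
The plan is to obtain the corollary as an immediate consequence of the preceding theorem, which already identifies the graded multiplicity polynomial $p_{V^{\alpha}}(t)$ in each of the two cases; all of the representation-theoretic content (Kostant's description of the harmonics and Hesselink's reduction to a Kostka--Foulkes polynomial in type $A$) is packaged into that theorem. The only remaining task is to apply the definition $g_{GL_n(\mathbb{C})}(V^{\alpha})=\deg(p_{V^{\alpha}}(t))$, together with the convention $\deg(0)=-\infty$, to each case.

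Concretely, I would argue by cases on the sum $\alpha_1+\cdots+\alpha_n$. If this sum is nonzero, part (1) of the preceding theorem gives $p_{V^{\alpha}}(t)=0$, so the convention yields $g_{GL_n(\mathbb{C})}(V^{\alpha})=\deg(0)=-\infty$. If the sum is zero, part (2) gives $p_{V^{\alpha}}(t)=K_{\lambda,(k^n)}(t)$ for any $k\geq|\alpha_n|$ with $\lambda=\alpha+(k^n)$, and taking degrees gives $g_{GL_n(\mathbb{C})}(V^{\alpha})=\deg(K_{\lambda,(k^n)}(t))$, exactly as stated.

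Since this is a direct corollary, there is no substantial obstacle; the one point I would take care to address is that $\lambda=\alpha+(k^n)$ appears to depend on the auxiliary choice of $k$. Independence of $k$ is in fact automatic, because $K_{\lambda,(k^n)}(t)$ equals the single fixed polynomial $p_{V^{\alpha}}(t)$ for every admissible $k$. To make the value transparent I would also evaluate the degree explicitly: every such $\lambda$ satisfies $\lambda\succeq(k^n)$, so by Corollary \ref{degree corollary} the degree equals $g_{nk,n}(\lambda)=b((k^n))-b(\lambda)$, where $b(\mu)=\sum_{i=1}^n(i-1)\mu_i$. Using $b((k^n))=k\binom{n}{2}$ and $b(\lambda)=b(\alpha)+k\binom{n}{2}$, the $k$-dependent terms cancel and
\[ g_{GL_n(\mathbb{C})}(V^{\alpha})=g_{nk,n}(\lambda)=-\sum_{i=1}^n(i-1)\alpha_i, \]
which both confirms independence of $k$ and gives a closed form for the Gini index directly in terms of the highest weight $\alpha$.
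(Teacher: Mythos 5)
Your proposal is correct and matches the paper's treatment: the corollary is obtained exactly as you describe, by reading off the two cases of the preceding theorem and applying the definition $g_{GL_n(\mathbb{C})}(V^{\alpha})=\deg(p_{V^{\alpha}}(t))$ together with the convention $\deg(0)=-\infty$. Your closing computation that the degree equals $b((k^n))-b(\lambda)=-\sum_{i=1}^n(i-1)\alpha_i$ is also consistent with what the paper records immediately after the corollary (there phrased as $g_{nk,n}(\lambda)$, with the $k$-independence isolated as a separate corollary).
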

By Theorem \ref{Lascoux & Schutzenberger}, we see that if $\alpha_1+\cdots+\alpha_n=0$, then 
\begin{align*}
    g_{GL_n(\mathbb{C})}(V^{\alpha})&=b((k^n))-b(\lambda)\\
    &=g_{nk,n}(\lambda).
\end{align*}
In other words, if we disregard the irreducible representations whose highest weight $\alpha$ has nonzero sum, the Gini index for $GL_n(\mathbb{C})$ is precisely the Gini index $g_{nk,n}$ defined on the set of partitions of $nk$ with at most $n$ parts. Since the choice of $k\geq|\alpha_n|$ in theorem \ref{GLn Gini Characterization} was immaterial, we obtain the following Corollary on the Gini index on partitions of $nk$ with at most $n$ parts:
\begin{corollary}
Let $n$, and $k$ be positive integers, and let $\lambda$ be a partition of $nk$ with at most $n$ parts. Let $j\geq k$, and define
\[ \mu=\lambda-(k^n)+(j^n). \]
Then $\mu$ is a partition of $nj$ with at most $n$ parts, and
\[g_{nk,n}(\lambda)=g_{nj,n}(\mu).  \]
\label{g nk nj corollary}
\end{corollary}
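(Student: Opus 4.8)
The plan is to prove both claims by direct computation, exploiting the fact that the statistic $b(\nu)=\sum_{i=1}^n(i-1)\nu_i$ is a linear functional on length-$n$ integer vectors. First I would dispense with the claim that $\mu$ is a partition. Writing $\lambda=(\lambda_1,\ldots,\lambda_n)$ with $\lambda_1\geq\cdots\geq\lambda_n\geq 0$ and $\sum_i\lambda_i=nk$, the definition $\mu=\lambda-(k^n)+(j^n)$ gives $\mu_i=\lambda_i+(j-k)$ for each $i$. Because $j\geq k$, adding the nonnegative constant $j-k$ to every part preserves weak decrease and keeps every entry nonnegative, so $\mu_1\geq\cdots\geq\mu_n\geq 0$; and $\sum_i\mu_i=nk+n(j-k)=nj$. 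Hence $\mu$ is a partition of $nj$ with at most $n$ parts. This is precisely where the hypothesis $j\geq k$ is used.

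For the Gini identity, I would invoke linearity of $b$. Since $b$ is additive in its (vector) argument, $b(\mu)=b(\lambda)-b((k^n))+b((j^n))$. Substituting into the definition $g_{nj,n}(\mu)=b((j^n))-b(\mu)$ yields
\[ g_{nj,n}(\mu)=b((j^n))-b(\lambda)+b((k^n))-b((j^n))=b((k^n))-b(\lambda)=g_{nk,n}(\lambda), \]
which is the desired equality. Alternatively, one can compute $b(\mu)=b(\lambda)+(j-k)\binom{n}{2}$ directly from $\mu_i=\lambda_i+(j-k)$ and $\sum_{i=1}^n(i-1)=\binom{n}{2}$, and observe that the shift $(j-k)\binom{n}{2}$ cancels against the difference $b((j^n))-b((k^n))=(j-k)\binom{n}{2}$ of the two flat-partition normalizations.

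There is essentially no obstacle here: the content of the corollary is that the normalizing constant $b((k^n))$ and the statistic $b(\lambda)$ shift by the same amount $(j-k)\binom{n}{2}$ under the translation $\lambda\mapsto\mu$, so their difference is invariant. The only point requiring care is confirming that $\mu$ genuinely lies in the index set (a partition with at most $n$ parts), for which $j\geq k$ is exactly the right condition.

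Finally, I would remark that the result also drops out of the representation theory already developed: setting $\alpha=\lambda-(k^n)$, one has $\sum_i\alpha_i=0$ and $\mu=\alpha+(j^n)$, so that $\lambda$ and $\mu$ are the representatives at levels $k$ and $j$ of the same weight $\alpha$, with both $k,j\geq|\alpha_n|$. By Corollary \ref{GLn Gini Characterization} both $g_{nk,n}(\lambda)$ and $g_{nj,n}(\mu)$ equal $g_{GL_n(\mathbb{C})}(V^{\alpha})$, giving the equality directly; this is the content of the earlier observation that the choice of $k$ was immaterial.
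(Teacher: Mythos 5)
Your proof is correct. The paper itself does not give a direct computation: it derives the corollary from Corollary \ref{GLn Gini Characterization}, observing that both $\lambda$ and $\mu$ represent the same zero-sum dominant weight $\alpha=\lambda-(k^n)$ at levels $k$ and $j$ respectively, and that the choice of level $k\geq|\alpha_n|$ in the $GL_n(\mathbb{C})$ characterization was immaterial, so both Gini indices equal $\deg K_{\cdot,(\cdot^n)}(t)=g_{GL_n(\mathbb{C})}(V^{\alpha})$. That is exactly the route you sketch in your final paragraph, so you have recovered the paper's argument as well. Your primary argument, however, is a genuinely more elementary one: the linearity of $b(\nu)=\sum_{i=1}^n(i-1)\nu_i$ gives $b(\mu)=b(\lambda)+(j-k)\binom{n}{2}$, which cancels against the identical shift $b((j^n))-b((k^n))=(j-k)\binom{n}{2}$ in the normalizing constant, and the hypothesis $j\geq k$ is used only to ensure $\mu$ remains a partition. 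This buys a short, self-contained proof requiring none of the machinery of Kostka--Foulkes polynomials, Kostant's theorem, or the Lascoux--Sch\"utzenberger degree formula, and it makes transparent \emph{why} the choice of $k$ in the earlier corollary was immaterial, rather than citing that fact. Both arguments are sound; yours is preferable as a standalone proof, while the paper's placement emphasizes the representation-theoretic interpretation.
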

\begin{example}
To illustrate Corollary \ref{g nk nj corollary}, suppose we have two populations of $n=5$ people. Amongst the first population is distributed $nk=15$ dollars so that two people have $5$ dollars each, two have $2$ dollars each, and one person has $1$ dollar. That is, the first distribution corresponds to the partition
\[ \lambda=(5,5,2,2,1). \]
Amongst the second population suppose there is distributed $nj=25$ dollars in a fashion corresponding to the partition
\[ \mu=(7,7,4,4,3) \]
Then the Gini indices of these distributions are
\begin{align*}
    g_{15,5}(\lambda)&=b((3^5))-b(\lambda)=11,\text{ and}\\
    g_{25,5}(\mu)&=b((5^5))-b(\mu)=11.\\
\end{align*}
The reasons for this equality become obvious when we look at the graphs of the Lorenz curves of $\lambda$ and $\mu$, which are given in figures \ref{Lorenz 1} and \ref{Lorenz 2}, respectively.

The Lorenz curve of $\mu$ is simply that of $\lambda$ translated up $2$-units. But that translation increases the amount of money in circulation, and results in the line of equality also being translated up $2$-units --- leaving the Gini index invariant.

\begin{figure}[htp]
    \centering
    \includegraphics[width=10cm]{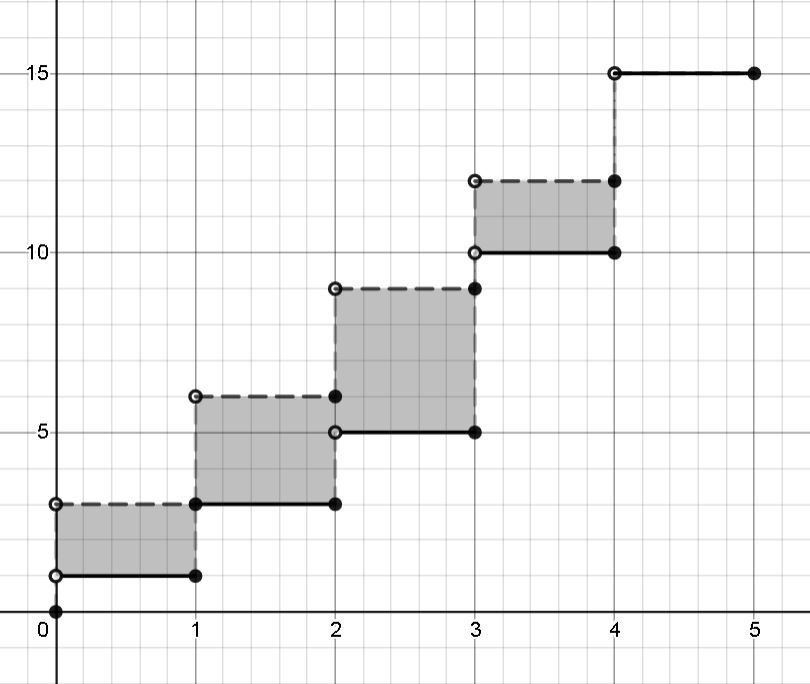}
    \caption{The line of equality (dashed), the Lorenz curve of the partition (5,5,2,2,1) of 15 (solid), and the area between them (shaded).}
    \label{Lorenz 1}
\end{figure}

\begin{figure}[htp]
    \centering
    \includegraphics[width=10cm]{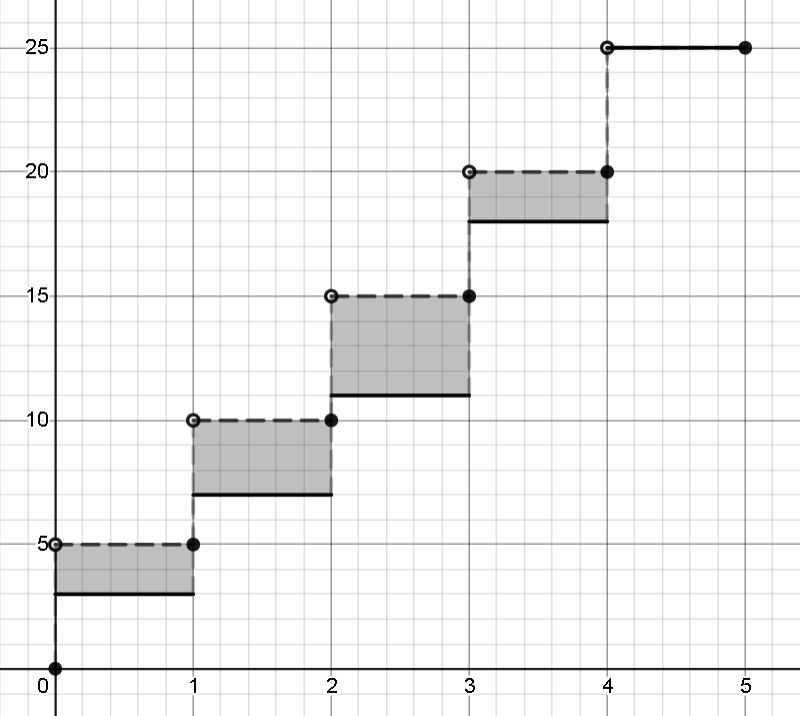}
    \caption{The line of equality (dashed), the Lorenz curve of the partition (7,7,4,4,3) of 25 (solid), and the area between them (shaded).}
    \label{Lorenz 2}
\end{figure}
\end{example}

\newpage
\section{Examples for $GL_n(\mathbb{C})$}
\begin{example}{\textbf{Gini index of $V^{(2,0,-2)}$ on $GL_3(\mathbb{C})$}}
Suppose there are $6$ dollars distributed amongst $3$ people, where one person has $4$ dollars, one has $2$, and the last person has $0$ dollars. This distribution corresponds to the parition $\lambda=(4,2,0)$ of $6$. Letting $n=3$ (the number of people in the population), and $k=2$ (the number of dollars per person in the most equitable distribution), we can find the Gini index $g_{nk,n}$ of $\lambda$, as in Chapter 3, by calculating
\begin{align*}
    g_{6,3}((4,2,0))&=b((2,2,2))-b((4,2,0))\\
    &=(0+2+4)-(0+2+0)\\
    &=6-2\\
    &=4.
\end{align*}
The distribution $\lambda=(4,2,0)$ corresponds to the $GL_3(\mathbb{C})$-dominant weight
\begin{align*}
    \alpha&=\lambda-(2^3)\\
    &=(4,2,0)-(2,2,2)\\
    &=(2,0,-2).
\end{align*}
The graded multiplicity of the highest weight representation $V^{\alpha}$ of $GL_3(\mathbb{C})$ is given by the Kostka-Foulkes polynomial
\[p_{V^{\alpha}}(t)=K_{\lambda,(2^3)}(t),  \]
which we will calculate using Theorem 18.

There are $3$ semi-standard Young tableaux with shape $\lambda=(4,2)$ and weight $(2,2,2)$. These tableaux and their charges are given in Figure \ref{GLn3 example}

\begin{figure}[h]
\begin{center}
    $\young(1122,33)\text{, charge }=4$\\
    $\young(1133,22)\text{, charge }=3$\\
    $\young(1123,23)\text{, charge }=2$
\end{center}    
\caption{Semi-standard Young tableaux with shape $(4,2)$ and weight $(2^3)$, and their corresponding charges.}
\label{GLn3 example}
\end{figure}
Thus the Kostka-Foulkes polynomial (and graded multiplicity of $V^{\alpha}$) is
\[ p_{V^{\alpha}}(t)=K_{\lambda,(2^3)}(t)=t^4+t^3+t^2. \]
Therefore, the Gini index of $V^{\alpha}$ is
\[ g_{GL_3(\mathbb{C})}(V^{\alpha})=\deg(p_{V^{\alpha}}(t))=4, \]
and we see that  $g_{6,3}(\lambda)=g_{GL_3(\mathbb{C})}(V^{\alpha})$.
\end{example}

\begin{example}{\textbf{Gini index of $V^{(2,1,0,-3)}$ on $GL_4(\mathbb{C})$.}}
Suppose there are 4 people in a population, amongst whom is distributed 12 dollars, where one person has 5 dollars, one has 4, one has 3 dollars, and the last has 0. This distribution corresponds to the partition $\lambda=(5,4,3,0)$ of $12$. Letting $n=4$ (the number of people in the population), and $k=3$ (the number of dollars per person in the most equitable distribution), we can find the Gini index $g_{nk,n}$ of $\lambda$, as in Chapter 3, by calculating
\begin{align*}
    g_{12,4}((5,4,3,0))&=b((3,3,3,3))-b((5,4,3,0))\\
    &=(0+3+6+9)-(0+4+6+0)\\
    &=18-8\\
    &=8.
\end{align*}
The distribution $\lambda=(4,4,3,1)$ corresponds to the $GL_4(\mathbb{C})$-dominant weight
\begin{align*}
    \alpha&=\lambda-(3^4)\\
    &=(5,4,3,0)-(3,3,3,3)\\
    &=(2,1,0,-3).
\end{align*}

The graded multiplicity of the highest weight representation $V^{\alpha}$ is given by the Kostka-Foulkes polynomial
\[p_{V^{\alpha}}(t)=K_{\lambda,(3^4)}(t),  \]
which we will calculate using Therem \ref{Lascoux & Schutzenberger}.

There are $8$ semi-standard Young tableaux with shape $\lambda=(5,4,3,0)$ and weight $(3,3,3,3)$. These tableaux and their charges are given in Figure \ref{GLn example}, and show that the Kostka-Foulkes polynomial (and graded multiplicity of $V^{\alpha}$) is
\[ p_{V^{\alpha}}(t)=K_{\lambda,(3^4)}(t)=t^4+2t^5+2t^6+2t^7+t^8. \]
\begin{center}
\begin{figure}[h]
\begin{align*}
    \young(11123,2333,444),\,\text{charge}=8\hspace{2cm}&\young(11123,2233,444),\,\text{charge}=7\\
    \young(11123,2234,344),\,\text{charge}=6\hspace{2cm}&\young(11124,2233,344),\,\text{charge}=6\\
    \young(11122,2334,344),\,\text{charge}=7\hspace{2cm}&\young(11134,2224,334),\,\text{charge}=4\\
    \young(11124,2234,334),\,\text{charge}=5\hspace{2cm}&\young(11133,2224,344),\,\text{charge}=5
\end{align*}
\caption{Semi-standard Young tableaux with shape $(5,4,3)$ and weight $(3^4)$, and their corresponding charges.}
\label{GLn example}
\end{figure}
\end{center}
Therefore, the Gini index of $V^{\alpha}$ is
\[g_{GL_4(\mathbb{C})}(V^{\alpha})=\deg(p_{V^{\alpha}}(t))=8,  \]
and we see that $g_{12,4}(\lambda)=g_{GL_4(\mathbb{C})}(V^{\alpha})$.
\end{example}

\begin{example}{\textbf{Gini index of $V^{(3,2,-1,-4)}$ on $GL_4(\mathbb{C})$}}
Suppose there are $4$ people in a population, amongst whom is distributed $16$ dollars according to the partition $\lambda=(7,6,3,0)$. Letting $n=4$ (the number of people in the population), and $k=4$ (the number of dollars per person in the most equitable distribution), we can find the Gini index $g_{nk,n}$ of $\lambda$, as in Chapter 3, by calculating,
\begin{align*}
    g_{16,4}((7,6,3,0))&=b((4,4,4,4))-b((7,6,3,0))\\
    &=(0+4+8+12)-(0+6+6+0)\\
    &=24-12\\
    &=12.
\end{align*}
The distribution $\lambda=(7,6,3,0)$ corresponds to the $GL_4(\mathbb{C})$-dominant weight
\begin{align*}
    \alpha&=\lambda-(4^4)\\
    &=(7,6,3,0)-(4,4,4,4)\\
    &=(3,2,-1,-4).
\end{align*}
The graded multiplicity of the highest weight representation $V^{\alpha}$ of $GL_4(\mathbb{C})$ is given by the Kostka-Foulkes polynomial
\[ p_{V^{\alpha}}(t)=K_{\lambda,(4^4)}(t), \]
which we will calculate using Theorem \ref{Lascoux & Schutzenberger}.

There are $16$ semi-standard Young tableaux with shape $\lambda=(7,6,3)$ and weight $(4,4,4,4)$. These tableaux and their charges are given in Figure \ref{GLn example 4}, and show that the Kostka-Foulkes polynomial (and graded multiplicity of $V^{\alpha}$) is
\[ p_{V^{\alpha}}(t)=K_{\lambda,(4^4)}(t)=t^{12}+2t^{11}+3t^{10}+4t^{9}+3t^{8}+2t^{7}+t^{6}. \]
\begin{center}
\begin{figure}[h]
\begin{align*}
    \young(1111234,222444,333)\text{ , charge }=8 \hspace{2cm}&\young(1111334,222244,334)\text{ , charge }=6\\
    \young(1111224,223344,334)\text{ , charge }=9 \hspace{2cm}&\young(1111223,223444,334)\text{ , charge }=10\\
    \young(1111233,222444,334)\text{ , charge }=9\hspace{2cm}&\young(1111234,222344,334)\text{ , charge }=7\\
    \young(1111333,222244,344)\text{ , charge }=7\hspace{2cm}&\young(1111222,233334,444)\text{ , charge }=12\\
    \young(1111234,222334,344)\text{ , charge }=8\hspace{2cm}&\young(1111233,222344,344)\text{ , charge }=8\\
    \young(1111222,233344,344)\text{, charge }=11\hspace{2cm}&\young(1111224,223334,344)\text{ , charge }=10\\
    \young(1111223,223344,344)\text{ , charge }=9\hspace{2cm}&\young(1111233,222334,444)\text{ , charge }=9\\
    \young(1111224,223333,444)\text{, charge }=11\hspace{2cm}&\young(1111223,223334,444)\text{ , charge }=10
\end{align*}
\caption{Semi-standard Young tableaux with shape $(7,6,3)$ and weight $(4^4)$, and their corresponding charges.}
\label{GLn example 4}
\end{figure}
\end{center}

Therefore, the Gini index of $V^{\alpha}$ is
\[ g_{GL_4(\mathbb{C})}(V^{\alpha})=\deg(p_{V^{\alpha}}(t))=12, \]
and we see that $g_{16,4}(\lambda)=g_{GL_4(\mathbb{C})}(V^{\alpha})$.
\end{example}
In each of these examples, we have computed the Gini index of a partition $\lambda\vdash nk$ for which $\lambda_n=0$. Using Corollary \ref{g nk nj corollary}, these Gini indices are equal to those of any $\mu=\lambda+(j^n)$, for any $j\in\mathbb{N}$. The Kostka-Foulkes polynomials $K_{\lambda,(k^n)}(t)$ and $K_{\mu,(j^n)}(t)$ will be equal, but the semi-standard Young tableaux of shape $\mu$ and weight $(j^n)$ are more difficult to work with than those of shape $\lambda$ and weight $(k^n)$. In other words, we intentionally looked at the simplest distributions, those for which $\lambda_n=0$, in order to simplify our computations. The results, however, would have been similar had we chosen more complicated examples. 

\newpage
\addcontentsline{toc}{chapter}{References}
\chapter*{References}
\begin{bibdiv}
\bibliographystyle{alphabetic}
\begin{biblist}

\setlength{\itemsep}{-1mm}

\bib{Andrews}{book}{
   author={Andrews, G. E.},
   author={Eriksson, K.},
   title={Integer partitions},
   publisher={Cambridge University Press, Cambridge},
   date={2004},
   pages={x+141},
   isbn={0-521-84118-6},
   isbn={0-521-60090-1},
   review={\MR{2122332}},
   doi={10.1017/CBO9781139167239},
}

\bib{Inequalities}{book}{
   author={Marshall, A. W.},
   author={Olkin, I.},
   author={Arnold, B. C.},
   title={Inequalities: theory of majorization and its applications},
   series={Springer Series in Statistics},
   edition={2},
   publisher={Springer, New York},
   date={2011},
   pages={xxviii+909},
   isbn={978-0-387-40087-7},
   review={\MR{2759813}},
   doi={10.1007/978-0-387-68276-1},
}

\bib{Brylawski}{article}{
   author={Brylawski, T.},
   title={The lattice of integer partitions},
   journal={Discrete Math.},
   volume={6},
   date={1973},
   pages={201--219},
   issn={0012-365X},
   review={\MR{325405}},
   doi={10.1016/0012-365X(73)90094-0},
}

\bib{Origins}{article}{       author={Ceriani, L.},
    author={Verme, P.}, title={The origins of the Gini index: extracts from variabilit\'a e mutabilit\'a (1912) by Corrado Gini},
    journal={Journal of Economic Inequality}, publisher={Springer}, date={2012},
    pages={421--443},
    doi={10.1007/s10888-011-9188-x},
    number={10},
}

\bib{Chevalley}{article}{
   author={Chevalley, C.},
   title={Invariants of finite groups generated by reflections},
   journal={Amer. J. Math.},
   volume={77},
   date={1955},
   pages={778--782},
   issn={0002-9327},
   review={\MR{72877}},
   doi={10.2307/2372597},
}

\bib{Hall-Littlewood}{article}{
   author={D\'{e}sarm\'{e}nien, J.},
   author={Leclerc, B.},
   author={Thibon, J. Y.},
   title={Hall-Littlewood functions and Kostka-Foulkes polynomials in
   representation theory},
   language={English, with English and French summaries},
   journal={S\'{e}m. Lothar. Combin.},
   volume={32},
   date={1994},
   pages={Art. B32c, approx. 38},
   review={\MR{1399504}},
}

\bib{Early}{article}{
   author={Early, E.},
   title={Chain lengths in the dominance lattice},
   journal={Discrete Math.},
   volume={313},
   date={2013},
   number={20},
   pages={2168--2177},
   issn={0012-365X},
   review={\MR{3084260}},
   doi={10.1016/j.disc.2013.05.016},
}

\bib{Farris}{article}{
   author={Farris, F. A.},
   title={The Gini index and measures of inequality},
   journal={Amer. Math. Monthly},
   volume={117},
   date={2010},
   number={10},
   pages={851--864},
   issn={0002-9890},
   review={\MR{2759359}},
   doi={10.4169/000298910X523344},
}

\bib{Fulton}{book}{
   author={Fulton, W.},
   title={Young tableaux},
   series={London Mathematical Society Student Texts},
   volume={35},
   note={With applications to representation theory and geometry},
   publisher={Cambridge University Press, Cambridge},
   date={1997},
   pages={x+260},
   isbn={0-521-56144-2},
   isbn={0-521-56724-6},
   review={\MR{1464693}},
}

\bib{SRI}{book}{
   author={Goodman, Roe},
   author={Wallach, Nolan R.},
   title={Symmetry, representations, and invariants},
   series={Graduate Texts in Mathematics},
   volume={255},
   publisher={Springer, Dordrecht},
   date={2009},
   pages={xx+716},
   isbn={978-0-387-79851-6},
   review={\MR{2522486}},
   doi={10.1007/978-0-387-79852-3},
}

\bib{Greene}{article}{
   author={Greene, C.},
   author={Kleitman, D. J.},
   title={Longest chains in the lattice of integer partitions ordered by
   majorization},
   journal={European J. Combin.},
   volume={7},
   date={1986},
   number={1},
   pages={1--10},
   issn={0195-6698},
   review={\MR{850140}},
   doi={10.1016/S0195-6698(86)80013-0},
}

\bib{Hall}{article}{
    author={Hall, P.},
    title={The algebra of partitions},
    journal={Proceedings of the 4th Canadian Mathematical Conference},
    pages={147--159},
    date={1957},
}	

\bib{Hardy}{article}{
   author={Hardy, G. H.},
   author={Ramanujan, S.},
   title={Asymptotic formul\ae  in combinatory analysis [Proc. London Math.
   Soc. (2) {\bf 17} (1918), 75--115]},
   conference={
      title={Collected papers of Srinivasa Ramanujan},
   },
   book={
      publisher={AMS Chelsea Publ., Providence, RI},
   },
   date={2000},
   pages={276--309},
   review={\MR{2280879}},
}
		
\bib{Hesselink}{article}{
   author={Hesselink, Wim H.},
   title={Characters of the nullcone},
   journal={Math. Ann.},
   volume={252},
   date={1980},
   number={3},
   pages={179--182},
   issn={0025-5831},
   review={\MR{593631}},
   doi={10.1007/BF01420081},
}
		
\bib{Kopitzke}{article}{
   author={Kopitzke, G.},
   title={The Gini index of an integer partition},
   journal={J. Integer Seq.},
   volume={23},
   date={2020},
   number={9},
   pages={Art. 20.9.7, 13},
   review={\MR{4167937}},
}

\bib{Kostant}{article}{
   author={Kostant, Bertram},
   title={Lie group representations on polynomial rings},
   journal={Bull. Amer. Math. Soc.},
   volume={69},
   date={1963},
   pages={518--526},
   issn={0002-9904},
   review={\MR{150240}},
   doi={10.1090/S0002-9904-1963-10980-5},
}	

\bib{Lascoux-Schutzenberger}{article}{
   author={Lascoux, A.},
   author={Sch\"{u}tzenberger, M.},
   title={Sur une conjecture de H. O. Foulkes},
   language={French, with English summary},
   journal={C. R. Acad. Sci. Paris S\'{e}r. A-B},
   volume={286},
   date={1978},
   number={7},
   pages={A323--A324},
   issn={0151-0509},
   review={\MR{472993}},
}

\bib{Littlewood}{article}{
   author={Littlewood, D. E.},
   title={On certain symmetric functions},
   journal={Proc. London Math. Soc. (3)},
   volume={11},
   date={1961},
   pages={485--498},
   issn={0024-6115},
   review={\MR{130308}},
   doi={10.1112/plms/s3-11.1.485},
}

\bib{Lorenz}{article}{         author={Lorenz, M. O.},      journal = {Publications of the American Statistical Association},
 number = {70},
 pages = {209--219},
 publisher = {American Statistical Association, Taylor & Francis, Ltd.},
 title = {Methods of measuring the concentration of wealth},
 volume = {9},
 date = {1905},
}

\bib{Macdonald}{book}{
   author={Macdonald, I. G.},
   title={Symmetric functions and Hall polynomials},
   series={Oxford Classic Texts in the Physical Sciences},
   edition={2},
   note={With contribution by A. V. Zelevinsky and a foreword by Richard
   Stanley;
   Reprint of the 2008 paperback edition [ MR1354144]},
   publisher={The Clarendon Press, Oxford University Press, New York},
   date={2015},
   pages={xii+475},
   isbn={978-0-19-873912-8},
   review={\MR{3443860}},
}

\bib{OEIS}{article}{
 author={OEIS Foundation Inc.}, 
 name={The On-Line Encyclopedia of Integer Sequences},
 date={2021}, url={http://oeis.org},
}

\bib{Shephard-Todd}{article}{
   author={Shephard, G. C.},
   author={Todd, J. A.},
   title={Finite unitary reflection groups},
   journal={Canad. J. Math.},
   volume={6},
   date={1954},
   pages={274--304},
   issn={0008-414X},
   review={\MR{59914}},
   doi={10.4153/cjm-1954-028-3},
}

\bib{Stanley GL}{article}{
   author={Stanley, R. P.},
   title={${\rm GL}(n,\mathbb{ C})$ for combinatorialists},
   conference={
      title={Surveys in combinatorics},
      address={Southampton},
      date={1983},
   },
   book={
      series={London Math. Soc. Lecture Note Ser.},
      volume={82},
      publisher={Cambridge Univ. Press, Cambridge},
   },
   date={1983},
   pages={187--199},
   review={\MR{721186}},
}

\bib{Stanley}{article}{
   author={Stanley, R. P.},
   title={Invariants of finite groups and their applications to
   combinatorics},
   journal={Bull. Amer. Math. Soc. (N.S.)},
   volume={1},
   date={1979},
   number={3},
   pages={475--511},
   issn={0273-0979},
   review={\MR{526968}},
   doi={10.1090/S0273-0979-1979-14597-X},
}


\end{biblist}
\end{bibdiv}

\newpage
\addcontentsline{toc}{chapter}{Curriculum Vitae}
\chapter*{Curriculum Vitae}
\begin{center}
    \textbf{Grant Kopitzke}
\end{center}

\begin{rSection}{Education}
\begin{itemize}
    \item \textbf{University of Wisconsin, Milwaukee}\hfill{May 2021}\\Ph.D. in Mathematics\\Dissertation Title: \textit{``The Gini Index in Algebraic Combinatorics and Representation Theory''}\\Advisor: Dr. Jeb Willenbring

    \item \textbf{University of Wisconsin, Milwaukee}\hfill{May 2019}\\{M.S. in Mathematics}
   
    \item \textbf{University of Wisconsin, Oshkosh}\hfill{May 2017}\\B.S. in Mathematics\\Thesis Title: \textit{``Congruences of the 11 and 13-Regular Partition Function''}
    
    \item \textbf{University of Wisconsin, Fox Valley}\hfill{May 2014}\\ A.A.S. in Mathematics
\end{itemize}
\end{rSection}

\begin{rSection}{Teaching Experience}
\textbf{University of Wisconsin, Milwaukee}\hfill{2017--2021}\\{Teaching Assistant -- Had full responsibility for the preparation, instruction, and grading of the following courses.}

\begin{itemize}
    \item \textbf{Math 431 -- Modern Algebra with Applications}\hfill Fall 2020\\Responsibility for course design, syllabus development, instruction and grading of one section. Wrote all new course notes in book form -- which will be revised and submitted for future publication. Course topics included groups, rings, fields, Boolean algebras, Diffie-Helman key exchanges, RSA cryptosystems, El Gamal cryptosystems, elliptic curve cryptography, cryptanalysis, and circuit simplification.
    \item \textbf{Math 276 -- Alg. Structures for Elementary Ed. Majors}\hfill Spring 2020, 2021\\Responsible for creation of all course materials, including syllabi, homework assignments, quizzes, and exams. Course topics included mathematical logic, sets, functions, groups, rings, and elementary number theory.
    \item \textbf{Math 105 -- Intro to College Algebra Online}\hfill Summer 2020\\Responsible for creation of all course materials, including syllabi, online homework system, quizzes, and exams. Course topics included quadratic equations, rational expressions, exponential and logarithmic functions, and rational exponents.
    \item \textbf{Math 231 -- Calculus and Analytic Geometry 1}\hfill Summer 2019\\Responsible for creation of all course materials, including syllabi, homework assignments, quizzes, and exams. Course topics included limits, derivatives, graphing, antiderivatives, integrals, and applications.
    \item \textbf{Math 103 -- Contemporary Applications of Mathematics}\hfill Spring 2019\\Responsible for instruction and grading of two sections. Course topics included voting theory, fair division, apportionment, graph theory, financial mathematics, and statistical inference.
    \item \textbf{Math 211 -- Survey in Calculus and Analytic Geometry}\hfill Fall 2018\\Responsible for instruction and grading of three discussion sections. Course topics included coordinate systems, equations of curves, limits, differentiation, integration, and applications.
    \item \textbf{Math 105 -- Intro to College Algebra}\hfill Fall 2017, Spring 2018, Fall 2019\\Responsible for instruction and grading of six sections over three semesters. Course topics included quadratic equations, rational expressions, exponential and logarithmic functions, and rational exponents.
\end{itemize}
\end{rSection}

\begin{rSection}{Research Interests}
\begin{itemize}
\item Algebraic Combinatorics
\item Representation Theory
\item Algebraic Groups
\item Enumerative Combinatorics
\item Number Theory
\item Cryptology
\item Mathematics Education
\end{itemize}
\end{rSection}

\begin{rSection}{Publications and Papers}
\begin{itemize}\label{}
\item \textit{\textbf{``The Gini Index in Alg. Combinatorics and Rep. Theory''}}\hfill 2021\\Ph.D. Dissertation -- University of Wisconsin, Milwaukee

\item \textit{\textbf{``The Gini Index of an Inter Partition''}}\hfill 2020\\Journal of Integer Sequences

\item \textit{\textbf{``Self-Similarity of the 11-Regular Partition Function''}}\hfill 2017\\Oshkosh Scholar (Undergraduate Paper)
\end{itemize}
\end{rSection}

\begin{rSection}{Related Experience}
\begin{itemize}
\item \textbf{University of Wisconsin, Milwaukee}\hfill{Spring 2021}\\
\textbf{Senior Project Advisor:} Co-advised an undergraduate student's senior thesis.\\
\textbf{Topic:} Applications of the $Ax+B$ group in cryptography.
\item \textbf{University of Wisconsin, Milwaukee}\hfill{Spring 2021}\\
\textbf{Grader:} Graded assignments for ``Highschool Mathematics from an Advanced Viewpoint."
\item \textbf{University of Wisconsin, Milwaukee}\hfill{Spring 2021}\\\textbf{Placement Test Monitor: } Proctored online mathematics placement tests.
\item \textbf{University of Wisconsin, Milwaukee}\hfill{Fall 2020}\\{\textbf{Teaching Assistant for Mathematical Literacy for College Students:} Assisted the primary instructor with grading, proctoring, etc..}
\item \textbf{University of Wisconsin, Milwaukee}\hfill{Spring 2020}\\\textbf{Calculus Testing Center Staff: } Administered, proctored and graded proficiency tests for Calculus I and II.
\item \textbf{University of Wisconsin, Oshkosh \& Fox Valley}\hfill{2013-2017}\\\textbf{Mathematics Tutor: }Tutored students in the walk-in math lab.
\end{itemize}
\end{rSection}

\begin{rSection}{Talks}
\begin{itemize}
\item \textbf{Binghampton Uni. Grad. Conference in Alg. and Top.} \hfill Fall 2020\\
\textit{``the Gini Index and Representations of the Symmetric Group''}
\item \textbf{Algebra Seminar (UW Milwaukee)}\hfill Spring 2020, Fall 2020
\begin{itemize}
    \item \textit{The Gini index of an Integer Partition} (Spring 2020)
    \item \textit{Dominance Properties of the Gini Index} (Spring 2020)
    \item \textit{Representation Theory of the Dihedral Group} (Fall 2021)
    \item \textit{The Gini Index and Representations of the Symmetric Group} (Fall 2021)
\end{itemize}
\item \textbf{MAA Wisconsin Sectional Meeting, UW Milwaukee} \hfill April 2017\\
\textit{``Speial K: Congruences for the k-Regular Partition Function''}
\end{itemize}
\end{rSection}

\begin{rSection}{Awards, Scholarships and Grants}
\begin{itemize}
\item \textbf{Academic Excellence and Service Award}\hfill May 2014\\An award for service and academic excellence in the mathematics department.
\item \textbf{Harvey C. McKenzie Mathematics Award}\hfill May 2017\\
An award for senior mathematics majors recognizing outstanding academic performance.
\item \textbf{Chancellor's Award} \hfill Fall 2017-Spring 2021
\item \textbf{Research Excellence Award} \hfill Fall 2017, Fall 2019\\
An award in recognition of excellence in mathematical research.
\item \textbf{Ernst Schwandt Teaching Assistant Award} \hfill May 2020\\
Ernst Schwandt Memorial Scholarship and Teaching Assistant Award in recognition outstanding teaching performance.
\item \textbf{Mark Lawrence Teply Award} \hfill May 2020\\
An award in recognition of outstanding research potential.
\end{itemize}
\end{rSection}

\begin{rSection}{Professional Development}
\begin{itemize}
    \item \textbf{UW Milwaukee Teaching Seminar}\hfill{Spring 2021}
    \item \textbf{Active Learning for Equitable Instruction}\hfill{Summer 2020}\\
    Participated in system-wide professional development course.
\end{itemize}
\end{rSection}

\begin{rSection}{Memberships}
\begin{itemize}
    \item American Mathematical Society
\end{itemize}
\end{rSection}

\begin{rSection}{Computing Skills}
\begin{itemize}
    \item Mathematical Software -- Maple, Mathematica
    \item Programming Languages -- VBA, Java, Latex
    \item Online Homework Systems -- WeBWork, Aleks, Wiley Connect, Realizeit
    \item Course Management Systems -- D2L, Canvas
    \item Other -- Microsoft Excel, Power BI, Sharepoint, Dynamics
\end{itemize}
\end{rSection}

\begin{rSection}{References}
\begin{itemize}
    \item Dr. Boris Okun\\ 
    University of Wisconsin, Milwaukee\\
    e-mail: okun@uwm.edu\\
    phone: (414) 251-7188
    \item Dr. Jeb Willenbring (Advisor)\\ 
    University of Wisconsin, Milwaukee\\
    e-mail: jw@uwm.edu\\
    phone: (414) 229-5280
    \item Dr. Kevin McLeod\\
    University of Wisconsin, Milwaukee\\
    e-mail: kevinm@uwm.edu\\
    phone: (414) 229-5269
\end{itemize}
\underline{Mailing Address for all references}:\\ P. O. Box 413\\
Department of Mathematical Sciences\\
University of Wisconsin-Milwaukee\\
Milwaukee, WI 53201-0413\\
USA
\end{rSection}
\newpage
\let\clearpage\relax

\end{document}